\newcommand{\heuteIst}{November 24, 2011, by TS}
\theoremstyle{plain}
\newtheorem{theorem}{Theorem}%[section]
\newtheorem{lemma}{Lemma}
\newtheorem{corollary}{Corollary}
\newtheorem{proposition}{Proposition}
\theoremstyle{definition}
\newtheorem{claim}{Claim}
\newtheorem{definition}{Definition}
\newtheorem{example}{Example}
\newtheorem{remark}{Remark}
\newtheorem{question}{Question}
\newtheorem{problem}{Problem}
\newcommand{\naturals}{\mathbb{N}}
\newcommand{\reals}{\mathbb{R}}
\newcommand{\abs}[1]{|#1|}
\newcommand\Img{\operatorname{Im}}
\newcommand\Ker{\operatorname{Ker}}
\newcommand\Harm{\operatorname{Harm}}
\newcommand\diam{\operatorname{diam}}
\newcommand\sign{\operatorname{sign}}
\newcommand{\im}{\operatorname{im}}
\begin{document}
\pagestyle{myheadings}
\markboth{Laurent Bartholdi, Thomas Schick, Nat Smale, Steve Smale}{Hodge Theory on Metric
Spaces}

\date{Last compiled \today; last edited \heuteIst}

\title{Hodge Theory on Metric Spaces}

\author{Laurent Bartholdi\footnotemark[1]\\ Georg-August-Universit\"at G{\"o}ttingen\\
G\"ottingen, Germany\and Thomas Schick\thanks{
email: \protect\href{mailto:schick@uni-math.gwdg.de}{\texttt{laurent@uni-math.gwdg.de}}
and \protect\href{mailto:schick@uni-math.gwdg.de}{\texttt{schick@uni-math.gwdg.de}}
\protect\\
www: \protect\href{http://www.uni-math.gwdg.de/schick}{\texttt{http://www.uni-math.gwdg.de/schick}}
\protect\\
Laurent Bartholdi and Thomas Schick were partially supported by the Courant
Research Center ``Higher order structures in Mathematics'' of the German
Initiative of Excellence
}\\
Georg-August-Universit\"at G{\"o}ttingen\\
G\"ottingen, Germany \and Nat Smale\\ University of Utah\\ Salt Lake City, USA \and Steve Smale\thanks{Steve Smale was supported in part by the NSF and the Toyota
Technological Institute, Chicago}\\  City University of Hong Kong\\ Pokfulam,
P.R. China
\and \textit{With an appendix by Anthony W. Baker}\thanks{email: \texttt{anthony.w.baker@boeing.com}}\\  \textit{Mathematics and Computing Technology}\\
  \textit{The Boeing Company}\\ \textit{Chicago, USA}.
}

\maketitle

\begin{abstract}
  Hodge theory is a beautiful synthesis of geometry, topology, and
analysis, which has been developed in the setting of Riemannian manifolds. On
the other hand, spaces of images, which are important in the mathematical
foundations of vision and pattern recognition, do not fit this framework. This
motivates us to develop a version of Hodge theory on metric spaces with a
probability measure. We believe that this constitutes a step towards
understanding the geometry of vision.

The appendix by Anthony Baker provides a separable, compact metric space with
infinite dimensional $\alpha$-scale homology.
\end{abstract}

\section{Introduction}\label{Section1}

Hodge Theory \cite{13} studies the relationships of topology, functional
analysis and geometry of a manifold. It extends the theory of the Laplacian on
domains of Euclidean space or on a manifold.

However, there are a number of spaces, not manifolds, which could benefit from
an extension of Hodge theory, and that is the motivation here. In particular
we believe that a deeper analysis in the theory of vision could be led by
developments of Hodge type. Spaces of images are important for developing a
mathematics of vision (see e.g.~Smale, Rosasco, Bouvrie, Caponnetto, and
Poggio \cite{20}); but these spaces are far from possessing manifold
structures. Other settings include spaces occurring in quantum field theory,
manifolds with singularities and/or non-uniform measures.

A number of previous papers have given us inspiration and guidance. For
example there are those in combinatorial Hodge theory of Eckmann \cite{8},
Dodziuk \cite{7}, Friedman \cite{11}, and more recently Jiang, Lim, Yao,
and Ye \cite{16}. Recent decades have seen extensions of the Laplacian from
its classical setting to that of combinatorial graph theory. See e.g.~Fan
Chung \cite{5}. Robin Forman \cite{10} has useful extensions from
manifolds. Further extensions and relationships to the classical settings are
Belkin, Niyogi \cite{2}, Belkin, De Vito, and Rosasco \cite{1}, Coifman,
Maggioni \cite{6}, and Smale, Zhou \cite{19}.

Our approach starts with a metric space $X$ (complete, separable), endowed
with a probability measure. For $\ell\geq 0$, an $\ell$-form is a function on
$(\ell+1)$-tuples of points in $X$. The coboundary operator $\delta$ is defined
from $\ell$-forms to $(\ell+1)$-forms in the classical way following \v Cech,
Alexander, and Spanier. Using the $L^2$-adjoint $\delta^{\ast}$ of $\delta$
for a boundary operator, the $\ell$th order Hodge operator on $\ell$-forms is
defined by $\Delta_{\ell}=\delta^{\ast}\delta+\delta\delta^{\ast}$. The
harmonic $\ell$-forms on $X$ are solutions of the equation
$\Delta_{\ell}(f)=0$. The $\ell$-harmonic forms reflect the $\ell$th homology
of $X$ but have geometric features. The harmonic form is a special
representative of the homology class and it may be interpreted as one
satisfying an optimality condition. Moreover, the Hodge equation is linear and
by choosing a finite sample from $X$ one can obtain an approximation of this
representative by a linear equation in finite dimension.

There are two avenues to develop this Hodge theory. The first is a kernel
version corresponding to a Gaussian or a reproducing kernel Hilbert
space. Here the topology is trivial but the analysis gives a substantial
picture. The second version is akin to the adjacency matrix of graph theory
and corresponds to a threshold at a given scale $\alpha$. When $X$ is finite
this picture overlaps with that of the combinatorial Hodge theory referred to
above.

For passage to a continuous Hodge theory, one encounters:

\begin{problem}[Poisson Regularity Problem] If $\Delta_{\ell}(f)=g$ is
  continuous, under what
  conditions is $f$ continuous?
\end{problem}
It is proved that a positive solution of the Poisson Regularity Problem
implies a complete Hodge decomposition for continuous $\ell$-forms in the
``adjacency matrix'' setting (at any scale $\alpha$), provided the $L^2$-cohomology
is finite dimensional. The problem is solved affirmatively for some cases as
$\ell=0$, or $X$ is finite. One special case is

\begin{problem} Under what conditions are harmonic $\ell$-forms continuous?
\end{problem}
Here we have a solution for $\ell=0$ and $\ell=1$.

The solution of these regularity problems would be progress toward the
important cohomology identification problem: To what extent does the $L^2$-cohomology coincide with the classical cohomology?
 We have an answer to this question, as well as a full Hodge theory in the special, but important case of Riemannian manifolds. The following theorem is proved in Section \ref{Section9} of this paper.

\begin{theorem} Suppose that $M$ is a compact Riemannian manifold, with strong convexity radius $r$ and that $k>0$ is an upper bound on the sectional curvatures.
Then, if $0<\alpha<\text{max}\{r,\sqrt{\pi}/2k\}$, our Hodge theory
holds. That is, we have a Hodge decomposition, the kernel of
$\Delta_{\ell}$ is isomorphic to the $L^2$-cohomology, and to
the de Rham cohomology of $M$ in degree $\ell$.
\end{theorem}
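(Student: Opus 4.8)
The goal is to show that for a compact Riemannian manifold $M$ with the stated curvature/injectivity-radius bounds, the metric-space Hodge theory at scale $\alpha$ reproduces classical Hodge/de Rham theory. The key structural point established earlier in the paper is that a positive solution of the Poisson Regularity Problem, together with finite-dimensionality of the $L^2$-cohomology, yields the full Hodge decomposition for continuous $\ell$-forms in the scale-$\alpha$ ("adjacency matrix") setting. So the strategy has three layers: (i) identify the scale-$\alpha$ $L^2$-cochain complex of $M$ with something computing de Rham cohomology; (ii) use that identification to get finite-dimensionality and the regularity needed; (iii) invoke the abstract machinery to conclude the Hodge decomposition and the isomorphism $\ker\Delta_\ell \cong H^\ell_{dR}(M)$.

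**Step 1 (comparison of complexes).** The condition $\alpha < \max\{r,\sqrt{\pi}/2k\}$ is exactly what makes every metric ball of radius $\alpha$ geodesically convex, hence contractible, and makes finite intersections of such balls contractible as well. I would therefore compare the scale-$\alpha$ Alexander–Spanier-type complex on $M$ with the classical Alexander–Spanier cochain complex computed with arbitrarily fine covers. Concretely, an $\ell$-form in our sense is a function on $(\ell+1)$-tuples of points that are pairwise within $\alpha$; the convexity hypothesis means these are precisely the tuples lying in a common convex ball. The first task is to show the cohomology of this complex is isomorphic to $H^\ell(M;\mathbb{R})$. The cleanest route is a nerve-type / acyclic-models argument: build a map from our complex to smooth differential forms (via "smoothing" an Alexander–Spanier cochain against a partition of unity subordinate to a cover by $\alpha$-balls), and a map back (restricting a differential form to small simplices and integrating), and check these are mutually inverse on cohomology using the contractibility of convex balls to run the usual acyclicity argument. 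This identifies $L^2$-cohomology at scale $\alpha$ (which one must also check agrees with the cohomology of the continuous complex — here the finite-dimensionality and a standard density argument enter) with de Rham cohomology.

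**Step 2 (regularity / finite-dimensionality).** Finite-dimensionality of $H^\ell_{dR}(M)$ is classical since $M$ is compact, and Step 1 transports it to the $L^2$-cohomology at scale $\alpha$. For the Poisson Regularity Problem on $M$: I would argue that the integral operator $\Delta_\ell$ at scale $\alpha$ on $M$ has a kernel that is, away from the diagonal, as regular as the cutoff/measure allow, so that $\Delta_\ell f = g$ with $g$ continuous forces $f$ continuous — essentially a mapping-property argument for an integral operator with continuous kernel on a compact space, combined with control of the "diagonal" contributions using the convexity geometry. (Alternatively, one only needs regularity of harmonic forms, where $\ell=0,1$ are already handled, plus the general Poisson statement which the paper says holds when the $L^2$-cohomology is finite-dimensional in the relevant degrees.)

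**Step 3 (assemble).** With Steps 1–2 in hand, apply the paper's main abstract result: positive Poisson regularity plus finite-dimensional $L^2$-cohomology gives the Hodge decomposition of continuous $\ell$-forms and the isomorphism $\ker\Delta_\ell \cong H^\ell_{L^2}$. Composing with the isomorphism of Step 1 gives $\ker\Delta_\ell \cong H^\ell_{dR}(M)$, completing the theorem.

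**Main obstacle.** I expect the hard part to be Step 1 — specifically, proving that the scale-$\alpha$ complex genuinely computes de Rham cohomology. The contractibility of convex balls gives acyclicity of the "local" pieces, but turning that into an honest chain-homotopy equivalence with the de Rham complex requires carefully constructed smoothing and sampling maps compatible with the $L^2$ structure, and one must check that passing from the (naturally non-separable, purely algebraic) Alexander–Spanier picture to the $L^2$-completed, continuous picture does not change the cohomology. Controlling that passage is where the curvature bound, the finiteness of the cohomology, and a density/approximation argument all have to be combined delicately; the regularity input of Step 2 is what ultimately licenses it.
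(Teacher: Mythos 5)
Your overall architecture (identify the scale-$\alpha$ cohomology with the ordinary cohomology of $M$, deduce finite-dimensionality, then invoke the abstract Hodge lemma) matches the paper's, but there are two genuine problems. First, you route the argument through the Poisson Regularity Problem. That is neither needed for this theorem nor available: Poisson regularity for $\ell\ge 2$ is an open problem in the paper (Question~\ref{qPRP}), settled only for $\ell=0$ (with harmonic regularity for $\ell=1$) under extra hypotheses on $\mu$. The statement being proved is the $L^2$ Hodge decomposition of Section~\ref{Section4}, and by Theorem~\ref{Theorem3} together with Proposition~\ref{PropositionWeinberger} (finite codimension of the image forces closed range) the only input required is that $H^{\ell}_{L^2,\alpha}$ be finite dimensional. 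Your Step~2, as written, makes the proof depend on a result that does not exist in the required generality.

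Second, your Step~1 --- the comparison with de Rham cohomology --- is where all the work lies, and the mechanism you sketch (a chain homotopy equivalence with the smooth de Rham complex via smoothing and sampling maps) is left unexecuted and is not how the paper proceeds. The paper forms a \v Cech--$L^p$--Alexander bicomplex for a finite cover of $M$ by balls of radius $\alpha+\delta$, augmented by the \v Cech complex of the cover in one direction and the scale-$\alpha$ Alexander complex in the other. Rows are contracted by a measurable partition of unity; columns are contracted by averaging over a ``witness set'' $W$ of positive measure contained in $V_I\cap\bigcap_{x\in U^{\ell}_{\alpha}\cap V_I^{\ell}}S_x$ (Hypothesis $(\ast)$). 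Producing such a $W$ is exactly where the hypotheses $\alpha<\rho(X)$ and $\alpha<\pi/(2\sqrt{k})$ enter, via the Rauch comparison theorem (continuity of the witness map $w_{\alpha}$ and the bound $r(\Lambda)\le\alpha+\delta$ for convex intersections of balls). None of this geometric input appears in your proposal; note also that for the neighborhoods $U^{\ell+1}_{\alpha}$ actually used (tuples within $\alpha$ of a \emph{common} point, not pairwise within $\alpha$) the slices $S_x$ vary with $x$, which is precisely the difficulty Hypothesis $(\ast)$ is designed to overcome. The conclusion then identifies $H^{\ell}_{\alpha,L^2}$ with the \v Cech cohomology of a good cover, hence with $H^{\ell}(M;\mathbb{R})$, with no smoothing or sampling maps needed; so the obstacle you flag at the end is real for your route but is bypassed entirely by the paper's bicomplex argument.
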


More general conditions on a metric space $X$ are given in Section
\ref{Section9}.

Certain previous studies show how topology questions can give insight into the
study of images. Lee, Pedersen, and Mumford \cite{15} have investigated
$3\times 3$ pixel images from real world data bases to find the evidence for
the occurrence of  homology classes of degree $1$. Moreover, Carlsson,
Ishkhanov, de Silva, and Zomorodian \cite{3} have found evidence for homology
of surfaces in the same data base. Here we are making an attempt to give some
foundations to these studies. Moreover, this general Hodge theory could yield
optimal representatives of the homology classes and provide systematic
algorithms.

Note that the problem of recognizing a surface is quite complex; in
particular, the cohomology of a non-oriented surface has torsion, and
it may seem naive to attempt to recover such information from
computations over $\mathbb R$. Nevertheless, we shall argue that Hodge
theory provides a rich set of tools for object recognition, going
strictly beyond ordinary real cohomology.

Related in spirit to our $L^2$-cohomology, but in a quite different setting,
is the $L^2$-cohomology as introduced by Atiyah \cite{Atiyah}. This is
defined either via $L^2$-differential forms \cite{Atiyah} or combinatorially \cite{Dodziuk(1977)}, but again with
an $L^2$ condition. Questions like the Hodge decomposition problem also arise
in this setting, and its failure gives rise to additional invariants, the
Novikov-Shubin invariants. This theory has been extensively studied, compare
e.g.~\cite{CheegerGromov,Luck2,Schick1,LinnellSchick} for important properties
and geometric as well as algebraic applications. In
\cite{Luck1,MR1828605,MR1990479} approximation of the $L^2$-Betti numbers for
infinite simplicial complexes in terms of associated finite simplicial
complexes is discussed in increasing generality. Complete calculations of the
spectrum of the associated Laplacian are rarely possible, but compare
\cite{DicksSchick} for one of these cases%
%, and
%\cite{austin09:_univer_cover_of_closed_manif} for more examples of this kind
. 
The monograph \cite{LuckBuch} provides rather complete information about this
theory. Of particular relevance for the present paper is Pansu's
\cite{MR1377309} where in Section 4 he introduces an $L^2$-Alexander-Spanier complex
similar to ours. He uses it to prove homotopy invariance of $L^2$-cohomology
---that way identifying its cohomology with $L^2$-de Rham cohomology and
$L^2$-simplicial cohomology (under suitable assumptions).

Here is some background to the writing of this paper. Essentially
Sections \ref{Section2} through \ref{Section8} were in a finished paper by Nat Smale and Steve Smale, February 20,
2009. That version stated that the coboundary operator of Theorem
\ref{Theorem3}, Section \ref{Section4}
must have a closed image. Thomas Schick pointed out that this assertion was
wrong, and in fact produced a counterexample, now Section \ref{Section10} of
this
paper. Moreover, Schick and Laurent Bartholdi set in motion the proofs that
give the sufficient conditions for the finite dimensionality of the $L^2$-cohomology groups in Section \ref{Section9} of this paper, and hence the
property that the image of the
coboundary is closed. In particular Theorems \ref{Theorem9.1} and
\ref{Theorem9.2} were proved by them.

Some conversations with Shmuel Weinberger were helpful.

\section[An L2-Hodge Theory]{\protect\boldmath An $L^2$-Hodge Theory}\label{Section2}

In this section we construct a general Hodge Theory for certain
$L^2$-spaces over $X$, making only use of a probability measure on a
set $X$.

As to be expected, our main result (Theorem~\ref{Theorem1}) shows that
homology is trivial under these general assumptions. This is a
backbone for our subsequent elaborations, in which a metric will be
taken into account to obtain non-trivial homology.

This is akin to the construction of Alexander-Spanier cohomology in
topology, in which a chain complex with trivial homology (which does
not see the space's topology) is used to manufacter the standard
Alexander-Spanier complex.

The amount of structure needed for our theory is minimal. First, let
us introduce some notation used throughout the section.  $X$ will
denote a set endowed with a probability measure $\mu$
($\mu(X)=1$). The $\ell$-fold cartesian product of $X$ will be denoted
as $X^{\ell}$ and $\mu_{\ell}$ will denote the product measure on
$X^{\ell}$. A useful example to keep in mind is: $X$ a compact domain
in Euclidean space, $\mu$ the normalized Lebesgue measure. More
generally, one may take $\mu$ a Borel measure, which need not be the
Euclidean measure.

Furthermore, we will assume the existence of a kernel function
$K\colon X^2 \to\mathbb{R}$, a non-negative, measurable, symmetric
function which we will assume is in $L^{\infty}(X\times X)$, and for
certain results, we will impose additional assumptions on $K$.

We may consider, for simplicity, the constant kernel $K\equiv 1$; but
most proofs, in this section, cover with no difficulty the general
case, so we do not impose yet any restriction to $K$. Later sections,
on the other hand, will concentrate on $K\equiv1$, which already
provides a very rich theory.

The kernel $K$ may be used to conveniently encode the notion of
locality in our probability space $X$, for instance by defining it as
the Gaussian kernel $K(x,y)=e^{-\frac{\|x-y\|^2}{\sigma}}$,
for some $\sigma>0$.

Recall that a chain complex of vector spaces is a sequence of vector spaces
$V_j$ and linear maps $d_j\colon V_j\to V_{j-1}$ such that the composition
$d_{j-1}\circ d_j=0$. A co-chain complex is the same, except that $d_j\colon
V_j\to 
V_{j+1}$. The basic spaces in this section are $L^2(X^{\ell})$, from which we
will construct chain and cochain complexes:
\begin{equation}\label{2.1}
\begin{CD}
  \cdots @>{\partial_{\ell+1}} >> L^2(X^{\ell+1}) @> {\partial_{\ell}} >>
  L^2(X^{\ell}) @> {\partial_{\ell -1}} >> \cdots L^2(X) @>{ \partial_0} >> 0
\end{CD}
\end{equation}
and
\begin{equation}\label{2.2}
  \begin{CD}
    0 @>>> L^2(X) @>{\delta_0} >> L^2(X^2) @>{\delta_1} >> \cdots
    @>{\delta_{\ell-1} }>> L^2(X^{\ell+1}) @>{\delta_{\ell}} >> \cdots
  \end{CD}
\end{equation}

Here, both $\partial_{\ell}$ and $\delta_{\ell}$ will be bounded linear maps,
satisfying $\partial_{\ell-1}\circ \partial_{\ell}=0$ and $\delta_{\ell}\circ
\delta_{\ell-1}=0$. When there is no confusion, we will omit the subscripts of
these operators.

We first define $\delta=\delta_{\ell-1}\colon L^2(X^{\ell})\to L^2(X^{\ell+1})$ by
\begin{equation}\label{2.3}
  \begin{CD}
    \delta f(x_0,\dots,x_{\ell})=\sum_{i=0}^{\ell} (-1)^i \prod_{j\neq
      i}\sqrt{K(x_i,x_j)}f(x_0,\dots,\hat x_i,\dots,x_{\ell})
  \end{CD}
\end{equation}
where $\hat x_i$ means that $x_i$ is deleted. This is similar to the
co-boundary operator of Alexander-Spanier cohomology (see Spanier
\cite{21}). The square root in the formula is unimportant for most of the
sequel, and is there so that when we define the Laplacian on $L^2(X)$, we
recover the operator as defined in Gilboa and Osher \cite{12}. We also note
that in the case $X$ is a finite set, $\delta_0$ is essentially the same as
the gradient operator developed by Zhou and Sch\"olkopf \cite{24} in the
context of learning theory.

\begin{proposition}\label{Proposition1}
  For all $\ell\geq 0$, $\delta\colon L^2(X^{\ell})\to
  L^2(X^{\ell+1})$ is a bounded linear map.
\end{proposition}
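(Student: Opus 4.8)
The plan is to show that $\delta$ is bounded by directly estimating the $L^2(X^{\ell+1})$-norm of $\delta f$ in terms of the $L^2(X^{\ell})$-norm of $f$, exploiting the boundedness of $K$ on $X\times X$. Linearity is immediate from the formula \eqref{2.3}, since each summand depends linearly on $f$. Set $M=\|K\|_{L^\infty(X\times X)}$, so that $0\le K(x,y)\le M$ for $\mu_2$-almost every $(x,y)$, and hence $\sqrt{K(x_i,x_j)}\le M^{1/2}$ almost everywhere; consequently the product $\prod_{j\neq i}\sqrt{K(x_i,x_j)}$ is bounded by $M^{\ell/2}$ for each fixed $i$ (a product of $\ell$ such factors). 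This reduces the problem to estimating the $L^2$-norm of the ``unweighted'' coboundary $f\mapsto \sum_{i=0}^\ell (-1)^i f(x_0,\dots,\hat x_i,\dots,x_\ell)$.

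The key steps, in order, are as follows. First, apply the triangle inequality in $L^2(X^{\ell+1})$ to bound $\|\delta f\|$ by $\sum_{i=0}^\ell \big\| (x_0,\dots,x_\ell)\mapsto \prod_{j\neq i}\sqrt{K(x_i,x_j)}\, f(x_0,\dots,\hat x_i,\dots,x_\ell)\big\|_{L^2(X^{\ell+1})}$. Second, bound each term: using $\prod_{j\neq i}\sqrt{K(x_i,x_j)}\le M^{\ell/2}$ pointwise, the $i$-th term is at most $M^{\ell/2}$ times $\big(\int_{X^{\ell+1}} |f(x_0,\dots,\hat x_i,\dots,x_\ell)|^2 \, d\mu_{\ell+1}\big)^{1/2}$. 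Third, compute this integral via Fubini: since the integrand does not depend on the variable $x_i$, integrating out $x_i$ over $X$ contributes a factor $\mu(X)=1$, and the remaining integral over the other $\ell$ variables is exactly $\|f\|_{L^2(X^\ell)}^2$. Hence each of the $\ell+2$ terms (indices $i=0,\dots,\ell$, so $\ell+1$ terms) is bounded by $M^{\ell/2}\|f\|_{L^2(X^\ell)}$, giving $\|\delta f\|_{L^2(X^{\ell+1})}\le (\ell+1)M^{\ell/2}\|f\|_{L^2(X^\ell)}$, which proves boundedness with an explicit operator-norm bound.

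I do not expect a genuine obstacle here; the only points requiring a little care are measurability of $\delta f$ (which follows since $K$ is measurable, square roots and finite products and sums of measurable functions are measurable, and each $f(x_0,\dots,\hat x_i,\dots,x_\ell)$ is measurable on $X^{\ell+1}$ as a composition of $f$ with a coordinate projection) and the correct bookkeeping of how many factors of $\sqrt{K}$ appear in each summand — there are $\ell$ factors in the product $\prod_{j\neq i}$ when the form has arguments $x_0,\dots,x_\ell$, hence the exponent $\ell/2$ on $M$. The use of the probability normalization $\mu(X)=1$ in the Fubini step is what keeps the constant clean; without it one would pick up a factor of $\mu(X)$ per integrated-out variable, which is harmless but unnecessary here. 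If one wants the sharper dependence, the mutual near-orthogonality of the $\ell+1$ terms under integration could be exploited, but for mere boundedness the crude triangle-inequality estimate suffices.
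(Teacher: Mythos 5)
Your proposal is correct and follows essentially the same route as the paper: bound each kernel product by $\|K\|_\infty^{\ell/2}$, integrate out the missing variable using Fubini and $\mu(X)=1$, and sum over the $\ell+1$ terms to get the operator norm bound $(\ell+1)\|K\|_\infty^{\ell/2}$. The only cosmetic difference is that you apply the triangle (Minkowski) inequality in $L^2(X^{\ell+1})$ where the paper applies the Cauchy--Schwarz inequality pointwise in $\mathbb{R}^{\ell+1}$ before integrating --- both yield the identical constant --- and your stray ``$\ell+2$ terms'' is just a slip you immediately correct to $\ell+1$.
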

\begin{proof}
 Clearly $\delta f$ is measurable, as $K$ is measurable. Since
  $\|K \|_{\infty}<\infty$, it follows from the Schwartz inequality in
  $\mathbb{R}^\ell$ that
\begin{align*}
|\delta f(x_0,\dots,x_{\ell})|^2 &\leq \|K\|^{\ell}_{\infty}\left (\sum_{i=0}^{\ell}|f(x_0,\dots,\hat x_i,\dots,x_{\ell})|\right )^2\\
&\leq \|K\|^{\ell}_{\infty}(\ell +1)\sum_{i=0}^{\ell}|f(x_0,\dots,\hat x_i,\dots,x_{\ell})|^2.
\end{align*}
Now, integrating both sides of the inequality
with respect to $d\mu_{\ell+1}$ , using Fubini's Theorem on the right side and
the fact that $\mu(X)=1$ gives us
$$
\|\delta f\|_{L^2(X^{\ell+1})}\leq \sqrt{\|K\|^{\ell}_{\infty}}(\ell+1) \|f\|_{L^2(X^{\ell})},
$$
completing the proof.
\end{proof}

  Essentially the same proof shows that $\delta$ is a bounded linear map on
  $L^p$, $p\geq 1$.
  \begin{proposition}\label{Proposition2}
    For all $\ell\geq 1$, $\delta_{\ell}\circ\delta_{\ell-1}=0$.
  \end{proposition}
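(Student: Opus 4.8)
The plan is to verify $\delta_\ell \circ \delta_{\ell-1} = 0$ by direct computation on an arbitrary $f \in L^2(X^\ell)$, evaluating $\delta_\ell(\delta_{\ell-1} f)$ at a point $(x_0,\dots,x_{\ell+1}) \in X^{\ell+2}$ and showing all terms cancel in pairs. This is the classical Alexander--Spanier / simplicial argument, and the only thing to check is that the weights built from $\sqrt{K}$ combine correctly so that the cancellation is not spoiled. First I would expand
\[
\delta_\ell(\delta_{\ell-1} f)(x_0,\dots,x_{\ell+1}) = \sum_{i=0}^{\ell+1} (-1)^i \prod_{j \neq i} \sqrt{K(x_i,x_j)}\;(\delta_{\ell-1} f)(x_0,\dots,\hat x_i,\dots,x_{\ell+1}),
\]
and then expand the inner $\delta_{\ell-1} f$ again using \eqref{2.3}, being careful that after deleting $x_i$ the remaining variables are reindexed, so the inner sum runs over the $\ell+1$ surviving indices $k \neq i$ with sign determined by the \emph{position} of $x_k$ in the shortened tuple (namely $(-1)^{k}$ if $k<i$ and $(-1)^{k-1}$ if $k>i$), and the product is over the surviving variables other than $x_k$.

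The key step is to reorganize the resulting double sum as a sum over ordered pairs of deleted indices. Each term in which both $x_i$ and $x_k$ are deleted (with $i \neq k$) appears exactly twice: once from deleting $x_i$ first and then $x_k$, and once from deleting $x_k$ first and then $x_i$. I would check two things about such a matched pair. First, the scalar weight is the same in both: in each case it is the product of $\sqrt{K(x_a,x_b)}$ over all pairs $\{a,b\}$ with $a \in \{i,k\}$ and $b \notin \{i,k\}$, together with the factor $\sqrt{K(x_i,x_k)}$ coming from the outer product (note $\sqrt{K(x_i,x_k)}$ is produced by the outer $\delta$ regardless of which of $i,k$ is the outer deletion, using symmetry of $K$) --- so the two weights agree by commutativity of multiplication and symmetry $K(x_i,x_k)=K(x_k,x_i)$. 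Second, and this is the real point, the two signs are opposite: a standard bookkeeping check (identical to the simplicial identity $\partial^2=0$) shows that deleting the later index first versus the earlier index first contributes signs differing by exactly a factor $-1$, because of the shift in position induced by the first deletion. Hence the two copies cancel, and summing over all unordered pairs $\{i,k\}$ gives $0$.

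I expect the main obstacle to be purely notational: keeping the reindexing after the first deletion straight, so that the sign exponents are correctly computed in the two cases $i<k$ and $i>k$. To make this clean I would fix once and for all the convention that after deleting $x_i$ the variable originally called $x_k$ sits in position $k$ if $k<i$ and position $k-1$ if $k>i$, and then simply tabulate the sign of the $(i,k)$-term as $(-1)^i \cdot (-1)^{k}$ when $k<i$ and $(-1)^i \cdot (-1)^{k-1}$ when $k>i$; comparing the $(i,k)$-term with the $(k,i)$-term in each of these regimes shows the sum of exponents differs by an odd integer. The weight computation, by contrast, is essentially immediate once one observes that the full scalar factor attached to the pair $\{i,k\}$ is symmetric in $i$ and $k$ and independent of the order of deletion. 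No analytic input is needed beyond $K \in L^\infty$ (already used in Proposition~\ref{Proposition1} to know everything is well defined); the statement is an algebraic identity holding pointwise almost everywhere.
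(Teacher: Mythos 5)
Your proposal is correct and follows essentially the same route as the paper: expand the double coboundary, pair the $(i,k)$ and $(k,i)$ terms, check that the combined weight $\sqrt{K(x_i,x_k)}\prod_{b\neq i,k}\sqrt{K(x_i,x_b)K(x_k,x_b)}$ is symmetric in $i$ and $k$ (using symmetry of $K$), and observe that the signs $(-1)^{i+k}$ versus $(-1)^{i+k-1}$ force cancellation. Your sign bookkeeping and weight analysis match the paper's computation exactly.
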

\begin{proof}
  The proof is standard when $K\equiv 1$. For $f\in L^2(X^{\ell})$ we have
\begin{align*}
&\delta_{\ell}(\delta_{\ell-1} f)(x_0,\dots,x_{\ell+1})\\
&=\sum_{i=0}^{\ell+1}(-1)^i\prod_{j\neq i}\sqrt{K(x_i,x_j)}(\delta_{\ell-1} f)(x_0,\dots,\hat x_i,\dots,x_{\ell+1})\\
&=\sum_{i=0}^{\ell+1}(-1)^i\prod_{j\neq i}\sqrt{K(x_i,x_j)}\sum_{k=0}^{i-1}(-1)^k\prod_{n\neq k,i}\sqrt{K(x_k,x_n)}f(x_0,\dots,\hat x_k,\dots,\hat x_i,\dots,x_{\ell+1})\\
&+\sum_{i=0}^{\ell+1}(-1)^i\prod_{j\neq
  i}\sqrt{K(x_i,x_j)}\sum_{k=i+1}^{\ell+1}(-1)^{k-1}\prod_{n\neq
  k,i}\sqrt{K(x_k,x_n)}f(x_0,\dots,\hat x_i,\dots,\hat x_k,\dots,x_{\ell+1})
\end{align*}

Now we note that on the right side of the second equality for given $i,k$ with
$k<i$, the corresponding term in the first sum
$$(-1)^{i+k}\prod_{j\neq i}\sqrt{K(x_i,x_j)}\prod_{n\neq k,i}\sqrt{K(x_k,x_n)} f(x_0.\dots,\hat x_k,\dots,\hat x_i,\dots,x_{\ell+1})
$$
cancels the term in the second sum where $i$ and $k$ are reversed
$$
(-1)^{k+i-1}\prod_{j\neq k}\sqrt{K(x_k,x_j)}\prod_{n\neq
  k,i}\sqrt{K(x_k,x_n)} f(x_0.\dots,\hat x_k,\dots,\hat x_i,\dots,x_{\ell+1})
$$
because, using the symmetry of $K$,
\[
\prod_{j\neq i}\sqrt{K(x_i,x_j)}\prod_{n\neq
  k,i}\sqrt{K(x_k,x_n)}=\prod_{j\neq k}\sqrt{K(x_k,x_j)}\prod_{n\neq
  k,i}\sqrt{K(x_i,x_n)}.\qedhere
\]
\end{proof}
It follows that \eqref{2.2} and \eqref{2.3} define a co-chain complex. We now
define, for
$\ell>0$, $\partial_{\ell}\colon L^2(X^{\ell+1})\to L^2(X^{\ell})$ by
\begin{equation}\label{2.4}
\partial_{\ell}g(x)=\sum_{i=0}^{\ell}(-1)^i\int_X\left
  (\prod_{j=0}^{\ell-1}\sqrt{K(t,x_j)} \right
)g(x_0,\dots,x_{i-1},t,x_i,\dots,x_{\ell-1})\,d\mu(t)
\end{equation}
where $x=(x_0,\dots,x_{\ell-1})$ and for $\ell=0$ we define
$\partial_0\colon L^2(X)\to 0$.

\begin{proposition}\label{Proposition3}
 For all $\ell\geq 0$,
  $\partial_{\ell}\colon L^2(X^{\ell+1})\to L^2(X^{\ell})$ is a bounded linear map.
\end{proposition}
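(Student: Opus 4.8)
The plan is to mimic the proof of Proposition~\ref{Proposition1}: estimate $|\partial_\ell g(x)|$ pointwise using the Schwartz (Cauchy--Schwarz) inequality, then integrate over $X^\ell$ and invoke Fubini together with $\mu(X)=1$. First I would fix $\ell\geq 1$ (the case $\ell=0$ being trivial since $\partial_0$ is the zero map into $0$) and write, for $x=(x_0,\dots,x_{\ell-1})$,
\[
|\partial_\ell g(x)|\leq \sum_{i=0}^{\ell}\int_X \Bigl(\prod_{j=0}^{\ell-1}\sqrt{K(t,x_j)}\Bigr)\,\bigl|g(x_0,\dots,x_{i-1},t,x_i,\dots,x_{\ell-1})\bigr|\,d\mu(t).
\]
Since $K$ is non-negative and in $L^\infty$, the product $\prod_{j}\sqrt{K(t,x_j)}$ is bounded by $\|K\|_\infty^{\ell/2}$. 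For each fixed $i$, apply Cauchy--Schwarz in $L^2(X,d\mu(t))$ to the $i$-th integral, using $\mu(X)=1$:
\[
\int_X \bigl|g(\dots,t,\dots)\bigr|\,d\mu(t)\ \leq\ \Bigl(\int_X \bigl|g(\dots,t,\dots)\bigr|^2\,d\mu(t)\Bigr)^{1/2}.
\]
Combining, $|\partial_\ell g(x)| \leq \|K\|_\infty^{\ell/2}\sum_{i=0}^{\ell}\bigl(\int_X |g(\dots,t,\dots)|^2\,d\mu(t)\bigr)^{1/2}$, and then the elementary inequality $(\sum_{i=0}^\ell a_i)^2\leq (\ell+1)\sum_{i=0}^\ell a_i^2$ gives
\[
|\partial_\ell g(x)|^2\ \leq\ \|K\|_\infty^{\ell}(\ell+1)\sum_{i=0}^{\ell}\int_X \bigl|g(x_0,\dots,x_{i-1},t,x_i,\dots,x_{\ell-1})\bigr|^2\,d\mu(t).
\]

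Next I would integrate this inequality over $X^\ell$ against $d\mu_\ell$. By Fubini's theorem, for each $i$ the double integral
\[
\int_{X^\ell}\!\int_X \bigl|g(x_0,\dots,x_{i-1},t,x_i,\dots,x_{\ell-1})\bigr|^2\,d\mu(t)\,d\mu_\ell(x)
\]
is just $\|g\|_{L^2(X^{\ell+1})}^2$, since inserting $t$ into the $i$-th slot is a measure-preserving relabeling of the $(\ell+1)$ coordinates and $\mu_{\ell+1}$ is the product measure. Summing over the $\ell+1$ values of $i$ yields
\[
\|\partial_\ell g\|_{L^2(X^\ell)}^2\ \leq\ \|K\|_\infty^{\ell}(\ell+1)^2\,\|g\|_{L^2(X^{\ell+1})}^2,
\]
i.e.\ $\|\partial_\ell g\|_{L^2(X^\ell)}\leq \|K\|_\infty^{\ell/2}(\ell+1)\|g\|_{L^2(X^{\ell+1})}$, which proves boundedness; linearity is immediate from the formula. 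Measurability of $\partial_\ell g$ follows from measurability of $K$ and $g$ together with Fubini, so that the integrals defining $\partial_\ell g$ make sense a.e.\ and depend measurably on $x$.

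The only genuinely delicate point is the Fubini step: one must check that the integrand $(t,x)\mapsto \prod_j\sqrt{K(t,x_j)}\,|g(\dots,t,\dots)|$ is jointly measurable and that the iterated integral is finite, so that Tonelli/Fubini applies and the coordinate-permutation identification of the integral with $\|g\|_{L^2(X^{\ell+1})}^2$ is legitimate. Joint measurability holds because $K\in L^\infty(X\times X)$ is measurable and finite products and compositions with coordinate projections preserve measurability; finiteness of the iterated integral is exactly the pointwise bound established above together with $g\in L^2$. Everything else is the same bookkeeping with signs and hatted variables as in Propositions~\ref{Proposition1} and~\ref{Proposition2}, and the signs play no role in the norm estimate.
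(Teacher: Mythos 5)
Your proposal is correct and follows essentially the same route as the paper's proof: a pointwise bound on $|\partial_\ell g(x)|$ via the $L^\infty$ bound on the product of kernels, Cauchy--Schwarz in $L^2(X,d\mu(t))$ (using $\mu(X)=1$) and in $\mathbb{R}^{\ell+1}$, followed by squaring, integrating over $X^\ell$, and applying Fubini to identify each term with $\|g\|_{L^2(X^{\ell+1})}^2$. The only difference is the power of $\|K\|_\infty$ in the final constant ($\ell/2$ in yours versus $(\ell-1)/2$ in the paper's), which is immaterial to boundedness.
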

\begin{proof}
  For $g\in L^2(X^{\ell+1})$, we have
\begin{align*}
|\partial_{\ell}g(x_0,\dots,x_{\ell-1})| &\leq \|K\|_{\infty}^{(\ell-1)/2}\sum_{i=0}^{\ell}\int_X |g(x_0,\dots,x_{i-1},t,\dots,x_{\ell-1})|\,d\mu(t)\\
&\leq \|K\|_{\infty}^{(\ell-1)/2}\sum_{i=0}^{\ell}\left (\int_X|g(x_0,\dots,x_{i-1},t,\dots,x_{\ell-1})|^2\,d\mu(t) \right )^{\frac{1}{2}}\\
&\leq \|K\|_{\infty}^{(\ell-1)/2}\sqrt{\ell+1} \left
  (\sum_{i=0}^{\ell}\int_X|g(x_0,\dots,x_{i-1},t,\dots,x_{\ell-1})|^2\,d\mu(t)
\right )^{\frac{1}{2}}
\end{align*}
where we have used the Schwartz inequalities for $L^2(X)$ and $\mathbb{R}^{\ell+1}$ in the second and third inequalities respectively. Now, square
both sides of the inequality and integrate over $X^{\ell}$ with respect to
$\mu_{\ell}$ and use Fubini's Theorem arriving at the following bound to
finish the proof:
\[
\|\partial_{\ell}g\|_{L^2(X^{\ell})}\leq
\|K\|_{\infty}^{(\ell-1)/2}(\ell+1)\|g\|_{L^2(X^{\ell+1})}.\qedhere
\]
\end{proof}
\begin{remark}
   As in Proposition \ref{Proposition1}, we can replace $L^2$ by $L^p$, for
  $p\geq 1$.
\end{remark}

We now show that (for $p=2$) $\partial_{\ell}$ is actually the adjoint of
$\delta_{\ell-1}$ (which gives a second proof of Proposition
\ref{Proposition3}).
\begin{proposition}\label{Proposition4}
   $\delta_{\ell-1}^{\ast}=\partial_{\ell}$. That is
  $\langle \delta_{\ell-1}f,g\rangle_{L^2(X^{\ell+1})}=\langle f,\partial_{\ell}g\rangle_{L^2(X^{\ell})}$
  for all $f\in L^2(X^{\ell})$ and $g\in L^2(X^{\ell+1})$.
  \end{proposition}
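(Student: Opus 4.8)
The plan is to prove the adjoint identity $\langle \delta_{\ell-1}f,g\rangle = \langle f,\partial_\ell g\rangle$ by a direct computation: expand the left-hand side using the definition \eqref{2.3} of $\delta$, interchange the order of integration and summation via Fubini (justified since everything is in $L^2$ on a probability space and the integrands, after the Schwartz estimates in Propositions \ref{Proposition1} and \ref{Proposition3}, are integrable), and then in the $i$-th summand perform the change of variables that renames the integration variable $x_i$ to $t$ and reindexes the remaining coordinates $(x_0,\dots,\hat x_i,\dots,x_\ell)$ as $(x_0,\dots,x_{\ell-1})$. After this relabeling the $i$-th term should match exactly the $i$-th term in the definition \eqref{2.4} of $\partial_\ell$, including the sign $(-1)^i$ and the product of square-root kernel factors.

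Concretely, I would write
\[
\langle \delta_{\ell-1}f,g\rangle = \int_{X^{\ell+1}} \sum_{i=0}^{\ell}(-1)^i \Bigl(\prod_{j\neq i}\sqrt{K(x_i,x_j)}\Bigr) f(x_0,\dots,\hat x_i,\dots,x_\ell)\,\overline{g(x_0,\dots,x_\ell)}\,d\mu_{\ell+1},
\]
then pull the finite sum outside the integral and treat each $i$ separately. For fixed $i$, Fubini lets me integrate first over the variable in slot $i$; calling that variable $t$ and the surviving $\ell$ variables $x_0,\dots,x_{\ell-1}$ (in order), the factor $\prod_{j\neq i}\sqrt{K(x_i,x_j)}$ becomes $\prod_{m=0}^{\ell-1}\sqrt{K(t,x_m)}$, the function $f$ no longer depends on $t$, and the inner $t$-integral of $\sqrt{K(t,\cdot)}\,\overline{g}$ is precisely the $i$-th term of $\overline{\partial_\ell g(x_0,\dots,x_{\ell-1})}$. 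Summing over $i$ and recombining gives $\int_{X^\ell} f\,\overline{\partial_\ell g}\,d\mu_\ell = \langle f,\partial_\ell g\rangle$. Since real inner products are used here, conjugation bars are cosmetic and can be dropped.

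I do not anticipate a genuine obstacle — the computation is a bookkeeping exercise — but the one point requiring care is the treatment of the kernel product under the relabeling. In $\delta f$ the factor attached to the $i$-th term is $\prod_{j\neq i}\sqrt{K(x_i,x_j)}$, a product over the $\ell$ indices $j\in\{0,\dots,\ell\}\setminus\{i\}$; one must check that after deleting slot $i$ and renaming, this is exactly the product $\prod_{j=0}^{\ell-1}\sqrt{K(t,x_j)}$ appearing in \eqref{2.4}, with no leftover or missing factor and with $t$ playing the role of the old $x_i$. The sign $(-1)^i$ is common to both formulas in the same slot, so no sign juggling of the kind needed in Proposition \ref{Proposition2} arises. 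One should also note at the outset that the integrability needed to apply Fubini follows from the boundedness already established (e.g.\ $|\delta f|$ and $|\partial_\ell g|$ are in $L^2$, hence in $L^1$ against the bounded factors on the probability space), so the interchange is legitimate; this can be dispatched in a sentence.
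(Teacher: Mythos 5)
Your proposal is correct and follows essentially the same route as the paper's own proof: expand $\langle\delta_{\ell-1}f,g\rangle$ via \eqref{2.3}, apply Fubini to integrate out the $i$-th slot first, and relabel the remaining coordinates so that each summand becomes the corresponding term of $\langle f,\partial_\ell g\rangle$ from \eqref{2.4}. The point you flag about the kernel product matching under relabeling is exactly the bookkeeping the paper also carries out (via its variables $y_j$), so nothing is missing.
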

  \begin{proof}
    For $f\in L^2(X^{\ell})$ and $g\in L^2(X^{\ell+1})$ we have, by Fubini's
    Theorem
\begin{equation*}
  \begin{split}
    \langle\delta_{\ell-1}f,g\rangle =&\sum_{i=0}^{\ell}(-1)^i
    \int_{X^{\ell+1}} \prod_{j\neq i}\sqrt{K(x_i,x_j)}f(x_0,\dots,\hat
    x_i,\dots,x_{\ell})g(x_0,\dots,x_{\ell})\, d\mu_{\ell+1}\\ 
    =&\sum_{i=0}^{\ell}(-1)^i\int_{X^{\ell}} f(x_0,\dots,\hat
    x_i,\dots,x_{\ell})\cdot\\
    & \cdot \int_X\prod_{j\neq
      i}\sqrt{K(x_i,x_j)}g(x_0,\dots,x_{\ell})\,d\mu(x_i)\,d\mu(x_0)\cdots
    \widehat{d\mu(x_i)}\cdots d\mu(x_{\ell})
  \end{split}
\end{equation*}

In the $i$-th term on the right, relabeling the variables $x_0,\dots,\hat
x_i,\dots x_{\ell}$ with $y=(y_0,\dots,y_{\ell-1})$ (that is
$y_j=x_{j+1}$ for $j\geq i$) and putting the sum inside the integral gives us
$$
\int_{X^{\ell}}f(y)\sum_{i=0}^{\ell}(-1)^i\int_X\prod_{j=0}^{\ell-1}\sqrt{K(x_i,y_j)}g(y_0,\dots,y_{i-1},x_i,y_i,\dots,y_{\ell-1})\,
d\mu(x_i)\, d\mu_{\ell}(y)
$$
which is just $\langle f,\partial_{\ell}g\rangle$.
\end{proof}

We note, as a corollary, that $\partial_{\ell-1}\circ \partial_{\ell}=0$, and
thus \eqref{2.1} and \eqref{2.4} define a chain complex. We can thus define the homology
and cohomology spaces (real coefficients) of \eqref{2.1} and \eqref{2.2} as follows. Since
$\Img\partial_{\ell}\subset \Ker\partial_{\ell-1}$ and
$\Img\delta_{\ell-1}\subset\Ker\delta_{\ell}$ we define the
quotient spaces
\begin{equation}\label{2.5}
H_{\ell}(X)=H_{\ell}(X,K,\mu)=\frac{\Ker\partial_{\ell-1}}{\Img\partial_{\ell}}\
\ \ \ \
H^{\ell}(X)=H^{\ell}(X,K,\mu)=\frac{\Ker\delta_{\ell}}{\Img\delta_{\ell-1}}
\end{equation}
which will be referred to the $L^2$-homology and cohomology
of degree $\ell$, respectively. In later sections, with additional assumptions on $X$ and $K$,
we will investigate the relation between these spaces and the topology of $X$,
for example, the Alexander-Spanier cohomology. In order to proceed with the
Hodge Theory, we consider $\delta$ to be the analogue of the exterior
derivative $d$ on $\ell$-forms from differential topology, and
$\partial=\delta^{\ast}$ as the analogue of $d^{\ast}$. We then define the
Laplacian (in analogy with the Hodge Laplacian) to be $\Delta_{\ell} =
\delta_{\ell}^{\ast}\delta_{\ell}+\delta_{\ell-1}\delta_{\ell-1}^{\ast}$. Clearly
$\Delta_{\ell}\colon  L^2(X^{\ell+1})\to L^2(X^{\ell+1})$ is a bounded, self
adjoint, positive semi-definite operator since for $f\in L^2(X^{\ell+1})$
\begin{equation}\label{2.6}
\langle\Delta f,f\rangle=\langle\delta^{\ast}\delta f,f\rangle+\langle\delta\delta^{\ast}f,f\rangle=\|\delta
f\|^2+\|\delta^{\ast}f\|^2
\end{equation}
where we have left off the subscripts on the operators. The Hodge Theorem will
give a decomposition of $L^2(X^{\ell+1})$ in terms of the image spaces under
$\delta$, $\delta^{\ast}$ and the kernel of $\Delta$, and also identify the
kernel of $\Delta$ with $H^{\ell}(X,K,\mu)$. Elements of the kernel of
$\Delta$ will be referred to as harmonic. For $\ell=0$, one easily computes
that
$$
\frac{1}{2}\Delta_0 f(x)=D(x)f(x)-\int_X K(x,y)f(y)\, d\mu(y)\ \ \
\text{where}\ \ D(x)=\int_X K(x,y)\, d\mu(y)
$$
which, in the case $K$ is a positive definite kernel on $X$, is the Laplacian
defined in Smale and Zhou \cite{19} (see section 5 below).

\begin{remark}
  It follows from \eqref{2.6} that $\Delta f=0$ if and only if
  $\delta_{\ell} f=0$ and $\delta^{\ast}_{\ell-1} f=0$, and so
  $\Ker\Delta_{\ell}= \Ker\delta_{\ell}\cap\Ker\delta_{\ell-1}^\ast$;
  in other words, a form is harmonic if and only if it is both closed
  and coclosed.
\end{remark}

\noindent The main goal of this section is the following $L^2$-Hodge theorem.

\begin{theorem}\label{Theorem1}
  Assume that $0<\sigma\leq K(x,y)\leq \|K\|_{\infty}<\infty$ almost
  everywhere. Then we have trivial $L^2$-cohomology in the
    following sense:
    \begin{equation*}
      \im(\delta_\ell)=\ker(\delta_{\ell+1})\qquad\forall\ell\ge 0.
    \end{equation*}
In particular, $H^{\ell}(X)=0$ for $\ell>0$ and we have by Lemma \ref{Lemma1} the (trivial)
orthogonal, direct sum decomposition 
$$
L^2(X^{\ell+1})=\Img\delta_{\ell-1}\oplus
\Img\delta_{\ell}^{\ast}\oplus \Ker\Delta_{\ell}
$$
and the cohomology space $H^{\ell}(X,K,\mu)$ is isomorphic to
$\Ker\Delta_{\ell}$, with each equivalence class in the former having a
unique representative in the latter.

For $\ell>0$, of course $\Ker\Delta_\ell=\{0\}$. For $\ell=0$,
$\Ker\Delta_0=\ker\delta_0\cong\mathbb{R}$ consists precisely of the constant functions.
\end{theorem}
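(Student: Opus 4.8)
The plan is to show the single key identity $\im(\delta_\ell)=\ker(\delta_{\ell+1})$ for all $\ell\ge 0$, since everything else follows formally: Proposition~\ref{Proposition2} already gives $\im(\delta_\ell)\subseteq\ker(\delta_{\ell+1})$, and the reverse inclusion, together with the (presumably abstract) Hodge-decomposition Lemma~\ref{Lemma1}, yields the orthogonal splitting, the vanishing of $H^\ell$ for $\ell>0$, and the identification $H^\ell\cong\ker\Delta_\ell$. So the whole theorem reduces to: \emph{if $\delta_{\ell+1}g=0$ then $g=\delta_\ell f$ for some $f\in L^2(X^{\ell+1})$}. The natural strategy is to exhibit an explicit contracting homotopy, i.e.\ a bounded operator $h\colon L^2(X^{\ell+1})\to L^2(X^{\ell})$ with $\delta_{\ell-1}h+h\delta_\ell=\mathrm{id}$ on $L^2(X^{\ell+1})$ for $\ell\ge 1$ (and a similar statement handling the $\ell=0$ end, where the cohomology is $\mathbb{R}$). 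Given such an $h$, if $\delta_\ell g=0$ then $g=\delta_{\ell-1}(hg)$, which is exactly the desired surjectivity onto cocycles.

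The homotopy should be built by "plugging in a fixed point/averaging a variable." Concretely, because $\mu$ is a probability measure and $K$ is bounded below by $\sigma>0$, the function $D(x)=\int_X K(x,y)\,d\mu(y)$ satisfies $0<\sigma\le D(x)\le\|K\|_\infty$, so $1/D$ is bounded. The candidate is something like
\begin{equation*}
(hg)(x_0,\dots,x_{\ell-1})=\frac{1}{\text{(normalizing factor)}}\int_X \Big(\prod_{j=0}^{\ell-1}\tfrac{1}{\sqrt{K(t,x_j)}}\Big)\,g(t,x_0,\dots,x_{\ell-1})\,d\mu(t),
\end{equation*}
i.e.\ insert a new first variable $t$ integrated against $d\mu$, with weights chosen so that the $i=0$ term of $\delta_{\ell-1}(hg)$ reproduces $g$ and the remaining terms of $\delta_{\ell-1}(hg)$ cancel against $h(\delta_\ell g)$. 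One then computes $\delta_{\ell-1}(hg)+h(\delta_\ell g)$ directly, using the symmetry of $K$ and Fubini, and checks the telescoping cancellation; the lower bound $\sigma$ is what guarantees all the $1/\sqrt{K}$ weights are in $L^\infty$, hence $h$ is bounded and $hg\in L^2$ by the same Schwartz-plus-Fubini estimate used in Propositions~\ref{Proposition1} and~\ref{Proposition3}. For $\ell=0$ the same operator, after subtracting off the mean $\int_X g\,d\mu$ of a $0$-cocycle (constant times the constant function), shows $\ker\delta_0$ is exactly the constants, giving $\ker\Delta_0\cong\mathbb{R}$.

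The main obstacle is getting the weights in the homotopy exactly right so that the cross terms cancel in the presence of the $\sqrt{K}$ factors; with $K\equiv 1$ this is the classical "cone" contraction of the Alexander--Spanier complex and is routine, but the products $\prod_{j\ne i}\sqrt{K(x_i,x_j)}$ in $\delta$ do not interact cleanly with a naive insertion operator, so one must either renormalize $g$ first (e.g.\ work with $\tilde g = g\cdot\prod_{j<k}\sqrt{K(x_j,x_k)}^{-1}$ to reduce to the constant-kernel case, then transfer back) or carry the $K$-weights through the telescoping sum by hand. I expect the cleanest route is the change-of-variables one: conjugate the whole complex by the multiplication operator that divides an $\ell$-form by $\prod_{j<k}\sqrt{K(x_j,x_k)}$, observe this is a bounded isomorphism with bounded inverse precisely because $\sigma\le K\le\|K\|_\infty$, check it intertwines $\delta$ with the unweighted Alexander--Spanier coboundary, and then invoke the standard contractibility of that complex over a probability space. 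After that, boundedness of all maps makes Lemma~\ref{Lemma1} applicable and the stated decomposition and isomorphism $H^\ell(X,K,\mu)\cong\ker\Delta_\ell$ drop out.
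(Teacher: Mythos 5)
Your proposal is correct and follows essentially the same route as the paper: the paper's Lemma \ref{Lemma3} performs exactly the conjugation you describe, writing $\partial_{\ell}=M^{-1}_{\ell-1}\circ\partial^0_{\ell}\circ M_{\ell}$ with the multiplication operator $M_\ell$ (a bounded isomorphism because $\sigma\le K\le\|K\|_\infty$), reducing to the constant-kernel case, where Lemma \ref{Lemma2} gives the explicit contraction by inserting/averaging a dummy variable, and then the Hodge Lemma \ref{Lemma1} yields the decomposition. The only cosmetic difference is that the paper contracts the chain complex on the $\partial$ side (exhibiting explicit preimages under $\partial^0_\ell$) rather than building a cochain homotopy for $\delta$, which is equivalent by adjointness.
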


% In this case $H^{\ell}(X)=0$ for $\ell>0$ and $H^0(X)=\mathbb{R}$. Indeed, the
% theorem holds as long as $\delta_{\ell}$ (or equivalently) $\partial_{\ell}$)
% has closed range for all $\ell$.

In subsequent sections we will have occasion to use the $L^2$-spaces of
alternating functions:

\begin{align*}
L^2_a(X^{\ell+1})= &\{f\in L^2(X^{\ell+1})\colon f(x_0,\dots,x_{\ell})=(-1)^{\sign\sigma}f(x_{\sigma(0)},\dots,x_{\sigma(\ell)}), \\
& \sigma \ \text{a permutation}\}
\end{align*}

Due to the symmetry of $K$, it is easy to check that the coboundary $\delta$
preserves the alternating property, and thus Propositions \ref{Proposition1} through \ref{Proposition4}, as well
as formulas \eqref{2.1}, \eqref{2.2}, \eqref{2.5} and \eqref{2.6} hold with
$L^2_a$ in place of
$L^2$. We note that the alternating map
$$
\text{Alt}\colon L^2(X^{\ell+1})\to L^2_a(X^{\ell+1})
$$
defined by
$$
\text{Alt}(f)(x_0,\dots,x_{\ell}):=\frac{1}{(\ell+1)!}\sum_{\sigma\in
  S_{\ell+1}}(-1)^{\sign\sigma}f(x_{\sigma(0)},\dots,x_{\sigma(\ell)})
$$
is a projection relating the two definitions of $\ell$-forms. It is easy to
compute that this is actually an orthogonal projection, its inverse is just
the inclusion map.
\begin{remark}
  It follows from homological algebra that these maps induce
  inverse to each other isomorphisms of the cohomology groups we
  defined. Indeed, there is a standard chain homotopy between a variant of the
  projection 
  $\text{Alt}$ and the identity, givenq by
  $hf(x_0,\dots,x_n)=\frac{1}{n}\sum_{i=0}^nf(x_i,x_0,\dots,x_n)$. Because
  many formulas simplify, from now on 
  we will therefore most of the time work with the subcomplex of alternating
  functions.
\end{remark}

We first recall some relevant facts in a more abstract setting in the
following
\begin{lemma}[Hodge Lemma]\label{Lemma1}
  Suppose we have the cochain and corresponding dual chain
complexes
$$
\begin{CD}
  0 @>>> V_0 @>{\delta_0} >> V_1 @>{\delta_1} >> \cdots @>{\delta_{\ell-1} }>>
  V_{\ell} @>{\delta_{\ell}} >> \cdots
\end{CD}
$$
$$
\begin{CD}
\cdots @>{\delta_{\ell}^{\ast}} >> V_{\ell} @> {\delta_{\ell-1}^{\ast}} >>
V_{\ell-1} @> {\delta_{\ell -2}^{\ast}} >> \cdots @>{\delta_0^{\ast}} >>V_0
@>>> 0
\end{CD}
$$
where for $\ell=0,1,\dots$, $V_{\ell},\langle,\rangle_{\ell}$ is a Hilbert space,
$\delta_{\ell}$ (and thus $\delta_{\ell}^{\ast}$, the adjoint of
$\delta_{\ell}$) is a bounded linear map with $\delta^2=0$. Let
$\Delta_{\ell}=\delta_{\ell}^{\ast} \delta_{\ell}+\delta_{\ell-1}
\delta_{\ell-1}^{\ast}$. Then the following are equivalent:
\begin{enumerate}\item[(1)] $\delta_{\ell}\ \text{has closed range for all}\
  \ell$;
  \item[(2)] $\delta_{\ell}^{\ast}\ \text{has closed range for all}\ \ell$.
  \item[(3)] $\Delta_{\ell}=\delta^*_{\ell}\delta_\ell +
    \delta_{\ell-1}\delta_{\ell-1}^*$ has closed range for all $\ell$.
\end{enumerate}
Furthermore, if one of the above conditions hold, we have the orthogonal,
direct sum decomposition into closed subspaces
\begin{equation}\label{eq:spec_Hodge}
V_{\ell}=\Img\delta_{\ell-1}\oplus \Img\delta_{\ell}^{\ast}\oplus
\Ker\Delta_{\ell}
\end{equation}
and the quotient space
$\frac{\Ker\delta_{\ell}}{\Img\delta_{\ell-1}}$ is isomorphic to
$\Ker\Delta_{\ell}$, with each equivalence class in the former having a
unique representative in the latter.
\end{lemma}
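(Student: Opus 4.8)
The plan is to reduce everything to standard facts about bounded operators between Hilbert spaces, applied to $\delta_\ell$ and its adjoint. First I would record the elementary algebraic consequences of $\delta^2=0$: the inclusions $\Img\delta_{\ell-1}\subseteq\Ker\delta_\ell$ and, by taking adjoints, $\Img\delta_\ell^{\ast}\subseteq\Ker\delta_{\ell-1}^{\ast}$. For any bounded operator $T\colon H\to H'$ one has the orthogonal decompositions $H=\Ker T\oplus\overline{\Img T^{\ast}}$ and $H'=\overline{\Img T}\oplus\Ker T^{\ast}$; applying this to $T=\delta_{\ell-1}$ and $T=\delta_\ell$ and intersecting, I would obtain the \emph{always-valid} orthogonal decomposition
\begin{equation*}
V_\ell=\overline{\Img\delta_{\ell-1}}\oplus\overline{\Img\delta_\ell^{\ast}}\oplus\bigl(\Ker\delta_\ell\cap\Ker\delta_{\ell-1}^{\ast}\bigr),
\end{equation*}
where the three summands are mutually orthogonal (orthogonality of the first two uses $\delta_{\ell-1}\delta_\ell^{\ast}{}^{\ast}$-type relations, i.e. $\delta_\ell\circ\delta_{\ell-1}=0$ and its adjoint). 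By the Remark preceding the lemma, the last summand is exactly $\Ker\Delta_\ell$. So the substantive content is solely the passage from closures $\overline{\Img\,\cdot}$ to the images themselves, i.e. the closed-range statements, plus the final isomorphism.

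Next I would prove the equivalence (1)$\Leftrightarrow$(2)$\Leftrightarrow$(3). The equivalence (1)$\Leftrightarrow$(2) is the classical closed-range theorem: $T$ has closed range iff $T^{\ast}$ does; applied for every $\ell$ this gives it directly. For (1)$\Rightarrow$(3): assuming all $\delta_\ell$ (hence all $\delta_\ell^{\ast}$) have closed range, the three summands above are closed, so $\Delta_\ell=\delta_\ell^{\ast}\delta_\ell+\delta_{\ell-1}\delta_{\ell-1}^{\ast}$ acts as $\delta_\ell^{\ast}\delta_\ell$ on $\overline{\Img\delta_\ell^{\ast}}=\Img\delta_\ell^{\ast}$, as $\delta_{\ell-1}\delta_{\ell-1}^{\ast}$ on $\Img\delta_{\ell-1}$, and as $0$ on $\Ker\Delta_\ell$; on each of the first two pieces it is an invertible self-adjoint operator of the subspace onto itself (invertibility because, e.g., $\delta_\ell^{\ast}\delta_\ell$ restricted to $(\Ker\delta_\ell)^{\perp}$ is injective with closed range), so $\Img\Delta_\ell=\Img\delta_{\ell-1}\oplus\Img\delta_\ell^{\ast}$ is closed. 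For (3)$\Rightarrow$(1): if $\Delta_\ell$ has closed range then, since $\Delta_\ell$ is self-adjoint and positive, $V_\ell=\Ker\Delta_\ell\oplus\overline{\Img\Delta_\ell}=\Ker\Delta_\ell\oplus\Img\Delta_\ell$, and on $(\Ker\Delta_\ell)^{\perp}$ it is bounded below; using $\langle\Delta_\ell f,f\rangle=\|\delta_\ell f\|^2+\|\delta_{\ell-1}^{\ast}f\|^2$ one deduces that $\delta_\ell$ restricted to $(\Ker\delta_\ell)^{\perp}$ is bounded below, hence has closed range, and the same for $\delta_{\ell-1}^{\ast}$ for every $\ell$, which is (2), hence (1).

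Finally, granting one (hence all) of (1)–(3), the decomposition \eqref{eq:spec_Hodge} is the always-valid decomposition above with the closures removed, and for the isomorphism $\Ker\delta_\ell/\Img\delta_{\ell-1}\cong\Ker\Delta_\ell$ I would argue: $\Ker\delta_\ell=\Img\delta_{\ell-1}\oplus(\Ker\delta_\ell\cap(\Img\delta_{\ell-1})^{\perp})$, and $\Ker\delta_\ell\cap(\Img\delta_{\ell-1})^{\perp}=\Ker\delta_\ell\cap\Ker\delta_{\ell-1}^{\ast}=\Ker\Delta_\ell$ (using that $(\Img\delta_{\ell-1})^{\perp}=\Ker\delta_{\ell-1}^{\ast}$, now with $\Img\delta_{\ell-1}$ closed so no closure is needed); thus the quotient map restricted to $\Ker\Delta_\ell$ is a bijection onto $\Ker\delta_\ell/\Img\delta_{\ell-1}$, and it is an isometry for the induced norms, giving the "unique representative" statement. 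The main obstacle is not any single deep theorem but organizing the bookkeeping cleanly: the one genuinely non-formal input is the closed-range theorem (equivalently, the fact that a bounded operator with closed range is bounded below on the orthocomplement of its kernel), and the place where care is needed is verifying the mutual orthogonality of $\Img\delta_{\ell-1}$ and $\Img\delta_\ell^{\ast}$ and that $\Delta_\ell$ genuinely restricts to an isomorphism on each of them — equivalently that no cross terms survive — which follows from $\delta_\ell\delta_{\ell-1}=0$ and its adjoint but should be stated explicitly.
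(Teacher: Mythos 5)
Your proposal is correct and follows essentially the same route as the paper: the always-valid orthogonal decomposition $V_\ell=\overline{\Img\delta_{\ell-1}}\oplus\overline{\Img\delta_\ell^{\ast}}\oplus\Ker\Delta_\ell$, the closed-range theorem for the equivalence of (1) and (2), the block-diagonal action of $\Delta_\ell$ on the three summands for (3), and the orthogonal projection onto $\Ker\Delta_\ell$ as the isomorphism with the cohomology quotient. The only cosmetic difference is that you obtain the general decomposition by intersecting the standard splittings $H=\Ker T\oplus\overline{\Img T^{\ast}}$ for $T=\delta_{\ell-1},\delta_\ell$, while the paper verifies the mutual orthogonality and the identification of the orthocomplement with $\Ker\Delta_\ell$ by direct computation; these are the same argument.
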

\begin{proof}
  We first assume conditions (1) and (2) above and prove the decomposition. For
  all $f\in V_{\ell-1}$ and $g\in V_{\ell+1}$ we have
$$
\langle\delta_{\ell-1}f,\delta_{\ell}^{\ast}g\rangle_{\ell}=\langle\delta_{\ell}\delta_{\ell-1}f,g\rangle_{\ell+1}=0.
$$
Also, as in \eqref{2.6}, $\Delta_{\ell}f=0$ if and only if $\delta_{\ell}f=0$ and
$\delta_{\ell-1}^{\ast}f=0$.  Therefore, if $f\in\Ker\Delta_{\ell}$,
then for all $g\in V_{\ell-1}$ and $h\in V_{\ell+1}$
$$
\langle
f,\delta_{\ell-1}g\rangle_{\ell}=\langle\delta_{\ell-1}^{\ast}f,g\rangle_{\ell-1}=0
\ \ \ \
\text{and}\ \ \ \
\langle f,\delta_{\ell}^{\ast}h\rangle_{\ell}=\langle\delta_{\ell}f,h\rangle_{\ell+1}=0
$$
and thus $\Img\delta_{\ell-1}$, $\Img\delta_{\ell}^{\ast}$ and
$\Ker\Delta_{\ell}$ are mutually orthogonal. We now show that
% Now, since
% $\Img\delta_{\ell-1}\oplus \Img\delta_{\ell}^{\ast}$ is closed, to
% prove the decomposition it suffices to show that
$\Ker\Delta_{\ell}\supseteq(\Img\delta_{\ell-1}\oplus
\Img\delta_{\ell}^{\ast})^{\perp}$. This implies the orthogonal decomposition
\begin{equation}\label{eq:gen_Hodge}
  V_\ell=\ker(\Delta_{\ell})\oplus \overline{\Img(\delta_{\ell-1})}\oplus
    \overline{\Img(\delta_{\ell}^*)}.
\end{equation}
If (1) and (2) hold this implies the Hodge decomposition \eqref{eq:spec_Hodge}.
Let $v\in
(\Img\delta_{\ell-1}\oplus \Img\delta_{\ell}^{\ast})^{\perp}$. Then,
for all $w\in V_{\ell}$,
$$
\langle\delta_{\ell}v,w\rangle=\langle v,\delta_{\ell}^{\ast}w\rangle=0\qquad\text{and}\qquad
\langle \delta_{\ell-1}^{\ast}v,w\rangle=\langle v,\delta_{\ell-1}w\rangle=0,
$$
 which implies that
$\delta_{\ell}v=0$ and $\delta_{\ell-1}^{\ast}v=0$ and as noted above this
implies that $\Delta_{\ell}v=0$, proving the decomposition.

We define an isomorphism
$$
\tilde P \colon  \frac{\Ker\delta_{\ell}}{\Img\delta_{\ell-1}}\to
\Ker\Delta_{\ell}
$$
as follows. Let $P\colon  V_{\ell}\to \Ker\Delta_{\ell}$ be the orthogonal
projection. Then, for an equivalence class $[f]\in
\frac{\Ker\delta_{\ell}}{\Img\delta_{\ell-1}}$ define $\tilde
P([f])=P(f)$. Note that if $[f]=[g]$ then $f=g+h$ with
$h\in\Img\delta_{\ell-1}$, and therefore $P(f)-P(g)=P(h)=0$ by the
orthogonal decomposition, and so $\tilde P$ is well defined, and linear as $P$
is linear. If $\tilde P([f])=0$ then $P(f)=0$ and so $f\in
\Img\delta_{\ell-1}\oplus \Img\delta_{\ell}^{\ast}$. But
$f\in\Ker\delta_{\ell}$, and so, for all $g\in V_{\ell+1}$ we have
$\langle \delta_{\ell}^{\ast}g,f\rangle=\langle g,\delta_{\ell}f\rangle=0$, and thus $f\in
\Img\delta_{\ell-1}$ and therefore $[f]=0$ and $\tilde P$ is
injective. On the other hand, $\tilde P$ is surjective because, if
$w\in\Ker\Delta_{\ell}$, then $w\in\Ker\delta_{\ell}$ and so
$\tilde P([w])=P(w)=w$.

Finally, the equivalence of conditions (1), (2), {and (3) is a general fact about Hilbert
spaces and Hilbert cochain complexes}. If $\delta\colon V\to H$ is a bounded 
linear map between Hilbert spaces, and
$\delta^{\ast}$ is its adjoint, and if $\Img\delta$ is closed in $H$,
then $\Img\delta^{\ast}$ is closed in $V$. We include the proof for
completeness. Since $\Img\delta$ is closed, the bijective map
$$
\delta\colon (\Ker\delta)^{\perp}\to\Img\delta
$$
is an isomorphism by the open mapping theorem. It follows that the norm of $\delta^-1$,
$$
\inf\{\|\delta(v)\|\colon v\in(\Ker\delta)^{\perp}, \ \|v\|=1\}>0.
$$
Since $\Img\delta\subset(\Ker\delta^{\ast})^{\perp}$, it suffices
to show that
$$
\delta^{\ast}\delta\colon (\Ker\delta)^{\perp}\to(\Ker\delta)^{\perp}
$$
is an isomorphism, for then
$\Img\delta^{\ast}=(\Ker\delta)^{\perp}$ which is closed. However,
this is established by noting that $\langle \delta^{\ast}\delta v,v\rangle=\|\delta v\|^2$
and the above inequality imply that
$$
\inf\{\langle \delta^{\ast}\delta v,v\rangle: v\in(\Ker\delta)^{\perp}, \
\|v\|=1\}>0.
$$

{The general Hodge decomposition \eqref{eq:gen_Hodge} implies that
$\Delta_\ell=\delta_\ell^*\delta_\ell$ acts on $\ker(\Delta_\ell)$  as the
zero operator (trivially), as $\delta^*_{\ell}\delta_\ell\colon
\overline{\im(\delta_\ell^*)}\to \im(\delta_{\ell}^*)$ (preserving this subspace) and as
  $\delta_{\ell-1}\delta_{\ell-1}^*$ on $\overline{\im(\delta_{\ell-1})}$,
  mapping also this subspace to itself.} 

{Now the image of an operator on a Hilbert space is closed if and only if it
maps the complement of its kernel isomorphically (with bounded inverse) to its
image. As the kernel of $\delta_{\ell}$ is the complement of the image of
$\delta_{\ell}^*$ and the kernel of $\delta_{\ell-1}^*$ is the complement of
the imaga of $\delta_\ell$, this implies indeed that $\Img(\Delta_\ell)$ is
closed if and only if (1) and (2) are satisfied.}

This finishes the proof of the lemma.
\end{proof}

\begin{corollary}
 For all $\ell\geq 0$ the following are isomorphisms,
  provided $\Img(\delta)$ is closed.
$$
\delta_{\ell}\colon  \Img\delta^{\ast}_{\ell}\to\Img\delta_{\ell}\ \ \ \
\text{and}\ \ \ \ \delta^{\ast}_{\ell}\colon
\Img\delta_{\ell}\to\Img\delta^{\ast}_{\ell}
$$
\end{corollary}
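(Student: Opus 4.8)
The plan is to deduce the corollary directly from the Hodge Lemma \ref{Lemma1}, specifically from the decomposition \eqref{eq:spec_Hodge} together with the last part of its proof. First I would recall that if $\Img(\delta_\ell)$ is closed for all $\ell$, then by the equivalence of (1) and (2) in Lemma \ref{Lemma1}, $\Img(\delta_\ell^*)$ is also closed for all $\ell$, and we have the orthogonal direct sum decomposition $V_\ell = \Img\delta_{\ell-1}\oplus\Img\delta_\ell^*\oplus\Ker\Delta_\ell$. The key structural fact, already extracted in the proof of the lemma, is that $(\Ker\delta_\ell)^\perp = \Img\delta_\ell^*$ and dually $(\Ker\delta_\ell^*)^\perp = \Img\delta_\ell$ (using that $\Img\delta_\ell \subseteq (\Ker\delta_\ell^*)^\perp$ always, with equality precisely when the ranges are closed, via the Hodge decomposition applied in degree $\ell+1$).

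Next I would verify the two maps are well-defined and bijective. For $\delta_\ell\colon \Img\delta_\ell^*\to\Img\delta_\ell$: since $\Img\delta_\ell^* = (\Ker\delta_\ell)^\perp$, the restriction of $\delta_\ell$ to this subspace is injective (its kernel meets $(\Ker\delta_\ell)^\perp$ trivially), and it is clearly surjective onto $\Img\delta_\ell$ because $\delta_\ell$ kills $\Ker\delta_\ell$, so $\delta_\ell(V_\ell) = \delta_\ell((\Ker\delta_\ell)^\perp)$. Boundedness is inherited from $\delta_\ell$. Then the open mapping theorem (applicable because both $\Img\delta_\ell^*$ and $\Img\delta_\ell$ are closed, hence Hilbert spaces in their own right) gives that the inverse is bounded, so $\delta_\ell$ restricts to an isomorphism. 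The same argument with the roles of $\delta_\ell$ and $\delta_\ell^*$ interchanged—using $\Img\delta_\ell = (\Ker\delta_\ell^*)^\perp$ and that $\delta_\ell^*$ kills $\Ker\delta_\ell^*$—shows $\delta_\ell^*\colon\Img\delta_\ell\to\Img\delta_\ell^*$ is an isomorphism. One may also observe these two isomorphisms are mutually inverse up to the bounded positive operator $\delta_\ell^*\delta_\ell$ restricted to $\Img\delta_\ell^*$, which is invertible by the infimum estimate $\langle\delta_\ell^*\delta_\ell v,v\rangle = \|\delta_\ell v\|^2 \ge c\|v\|^2$ on $(\Ker\delta_\ell)^\perp$ established in the lemma's proof.

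There is essentially no serious obstacle here; the corollary is a repackaging of facts proved inside Lemma \ref{Lemma1}. The only point requiring a little care is making sure one is entitled to invoke the open mapping theorem, i.e.\ that we really do know $\Img\delta_\ell$ and $\Img\delta_\ell^*$ are \emph{closed} (hence complete) before concluding the inverses are bounded—but this is exactly the hypothesis ``$\Img(\delta)$ is closed'' together with the (1)$\Leftrightarrow$(2) equivalence. So the proof is short: cite the decomposition \eqref{eq:spec_Hodge}, identify the orthogonal complements of the kernels with the relevant images, note injectivity and surjectivity of the restricted maps, and apply the open mapping theorem.
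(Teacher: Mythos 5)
Your proposal is correct and follows essentially the same route as the paper: both establish surjectivity from the Hodge decomposition via $\delta(V)=\delta(\Img\delta^{\ast})$ and then check injectivity of the restriction (the paper does this by the one-line computation $0=\langle\delta\delta^{\ast}f,f\rangle=\|\delta^{\ast}f\|^2$, while you identify $\Img\delta^{\ast}$ with $(\Ker\delta)^{\perp}$, which is equivalent under the closed-range hypothesis). Your extra appeal to the open mapping theorem to get a bounded inverse is a reasonable supplement that the paper leaves implicit.
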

\begin{proof}
  The first map is injective because if $\delta (\delta^{\ast} f)=0$ then
  $0=\langle \delta\delta^{\ast} f,f\rangle=\|\delta^{\ast}f\|^2$ and so $\delta^\ast f=0$. It
  is surjective because of the decomposition (leaving out the subscripts)
$$
\delta (V)=\delta
(\Img\delta\oplus\Img\delta^{\ast}\oplus\Ker\Delta)=\delta(\Img\delta^{\ast})
$$
since $\delta$ is zero on the first and third summands of the left side of the
second equality. The argument for the second map is the same.
\end{proof}

The difficulty in applying the Hodge Lemma is in verifying that either
$\delta$ or $\delta^{\ast}$ has closed range. A sufficient condition is the
following, first pointed out to us by Shmuel Weinberger.
\begin{proposition}\label{PropositionWeinberger}
 Suppose that in the context of Lemma \ref{Lemma1}, the $L^2$-cohomology space $\Ker\delta_{\ell}/\Img\delta_{\ell-1}$ is
  finite dimensional. Then $\delta_{\ell-1}$ has closed range.
\end{proposition}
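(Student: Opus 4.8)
The plan is to show that the image of $\delta_{\ell-1}$ has finite codimension inside $\Ker\delta_\ell$ and then invoke a standard functional-analytic fact: a subspace that is the image of a bounded operator and has finite codimension in a closed subspace of a Hilbert space is itself closed. More precisely, I would first note that $\Ker\delta_\ell$ is a closed subspace of $V_\ell$, since $\delta_\ell$ is bounded; hence $\Ker\delta_\ell$ is itself a Hilbert space. The hypothesis says precisely that $W:=\Img\delta_{\ell-1}$ has finite codimension $d=\dim\bigl(\Ker\delta_\ell/\Img\delta_{\ell-1}\bigr)$ in $\Ker\delta_\ell$. So it suffices to prove the general lemma: if $T\colon H_1\to H_2$ is a bounded operator between Hilbert spaces and $\Img T$ has finite codimension in some closed subspace $H_2'\subseteq H_2$ containing it, then $\Img T$ is closed.

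To prove that general lemma, I would pick a finite-dimensional complement $F\subseteq H_2'$, so $H_2' = \Img T \oplus F$ as vector spaces with $\dim F = d$. Consider the operator $\widetilde T\colon H_1 \oplus F \to H_2'$ defined by $\widetilde T(v,f) = Tv + f$. This is bounded and surjective onto $H_2'$. By the open mapping theorem, $\widetilde T$ is open, so $\Img\widetilde T = H_2'$ is attained, and in fact $H_1\oplus F$ modulo $\Ker\widetilde T$ maps isomorphically onto $H_2'$. Now $\Img T = \widetilde T\bigl(H_1\oplus\{0\}\bigr)$, and $H_1\oplus\{0\}$ is a closed subspace of $H_1\oplus F$. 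The image of a closed subspace under an open surjection need not be closed in general, so instead I would argue as follows: compose $\widetilde T$ with the quotient projection $H_2' \to H_2'/\Img T$, a space of dimension $d$; the kernel of this composite is $H_1\oplus\{0\} + \Ker\widetilde T$, which, being the sum of a closed subspace and a finite-dimensional subspace, is closed. Since the composite $H_1\oplus F \to H_2'/\Img T$ is bounded with closed kernel equal to $H_1\oplus\{0\}+\Ker\widetilde T$, and $\widetilde T$ is an open map onto $H_2'$, the preimage under $\widetilde T$ of $\Img T$, which equals that closed kernel, shows $\Img T = \widetilde T(\text{closed set})$; more cleanly, $\Img T$ is the preimage of $\{0\}$ under the induced bounded map $H_2'\to H_2'/\Img T$ — wait, that is circular — so instead I use: $\Img T$ has finite codimension in the Hilbert space $H_2'$, hence equals the kernel of a bounded surjective map onto a finite-dimensional space (namely the quotient map, which is automatically bounded once we know... ). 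The cleanest route: a finite-codimensional subspace $W$ of a Banach space $B$ is closed if and only if it is the kernel of a continuous linear functional-tuple, equivalently if and only if the quotient seminorm on $B/W$ is a norm; but a finite-dimensional quotient of a Banach space by a subspace always carries a unique Hausdorff topology only when the subspace is closed, so this too needs care.

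The honest and correct argument, which I would actually write, avoids this subtlety by working upstairs: replace $H_1$ by $H_1' := (\Ker T)^\perp$, so $T|_{H_1'}\colon H_1' \to \Img T$ is a bounded bijection. The subspace $\Img T \subseteq \Ker\delta_\ell$ has finite codimension; choose vectors $g_1,\dots,g_d\in\Ker\delta_\ell$ whose classes span the quotient, and set $S\colon H_1'\oplus\reals^d\to\Ker\delta_\ell$, $S(v,a)=Tv+\sum_i a_i g_i$. Then $S$ is a bounded \emph{bijection} between Hilbert spaces, so by the open mapping theorem $S^{-1}$ is bounded; therefore $S$ carries closed sets to closed sets, and in particular $\Img T = S(H_1'\oplus\{0\})$ is closed in $\Ker\delta_\ell$, hence in $V_\ell$. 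This last step — arranging $S$ to be a genuine bijection so that the open mapping theorem applies to its inverse — is the technical heart, and the one place where finite-dimensionality of the cohomology is essential; everything else is bookkeeping. I would close by remarking that, combined with the Hodge Lemma (Lemma \ref{Lemma1}), finite-dimensional $L^2$-cohomology in all degrees yields the full Hodge decomposition.
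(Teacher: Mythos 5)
Your final argument is correct and is essentially the paper's own proof: the paper also reduces to the general fact that a bounded operator between Banach spaces whose image has finite codimension has closed image, makes $T$ injective (by quotienting by $\Ker T$ where you take $(\Ker T)^\perp$ — equivalent in the Hilbert setting), augments the domain by a finite-dimensional complement to get a bounded bijection, and applies the open mapping theorem. The meandering middle paragraph with its false starts could simply be deleted; the closing argument is the whole proof.
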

\begin{proof}
  We show more generally, that if $T\colon B\to V$ is a bounded linear map of Banach
  spaces, with $\Img T$ having finite codimension in $V$ then $\Img T$
  is closed in $V$. We can assume without loss of generality that $T$ is
  injective, by replacing $B$ with $B/\Ker T$ if necessary. Thus $T\colon
  B\to \Img T\oplus F=V$ where $\dim F<\infty$. Now define $G\colon
  B\oplus F\to V$ by $G(x,y)=Tx+y$. $G$ is bounded , surjective and injective,
  and thus an isomorphism by the open mapping theorem. Therefore $G(B)=T(B)$
  is closed in $V$.
\end{proof}

Consider the special case where $K(x,y)=1$ for all $x,y$ in
$X$. Let $\partial_{\ell}^0$ be the corresponding operator in
\eqref{2.4}. We have
\begin{lemma}\label{Lemma2}
  For $\ell>1$, $\Img\partial_{\ell}^0=\Ker\partial_{\ell-1}^0$,
  and $\Img\partial_1^0=\{1\}^{\perp}$ the orthogonal complement of the
  constants in $L^2(X)$.
\end{lemma}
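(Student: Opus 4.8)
The plan is to write down an explicit contracting chain homotopy for the complex \eqref{2.1} in the case $K\equiv1$; this is nothing but the ``coning off'' homotopy familiar from the simplicial chains on a simplex, with the probability measure playing the role of the cone point. For $n\ge1$ define $h_n\colon L^2(X^n)\to L^2(X^{n+1})$ by
$$(h_ng)(x_0,\dots,x_n)=g(x_1,\dots,x_n),$$
that is, insert a dummy first coordinate. Since $\mu(X)=1$, Fubini's theorem gives $\|h_ng\|_{L^2(X^{n+1})}=\|g\|_{L^2(X^n)}$, so each $h_n$ is a bounded (indeed isometric) linear map.

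The key step is the homotopy identity
$$\partial_n^0 h_n+h_{n-1}\partial_{n-1}^0=\mathrm{id}_{L^2(X^n)}\qquad(n\ge2),$$
which I would verify directly from \eqref{2.4} specialised to $K\equiv1$. In $\partial_n^0(h_ng)(x_0,\dots,x_{n-1})$ the summand $i=0$ integrates out the dummy variable and, using $\mu(X)=1$, contributes exactly $g(x_0,\dots,x_{n-1})$; for $i\ge1$ the dummy coordinate is never the one being integrated, so $h_n$ merely deletes it, and the substitution $j=i-1$ in the remaining $n$ summands reassembles $-(h_{n-1}\partial_{n-1}^0g)(x_0,\dots,x_{n-1})$. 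The one thing to watch is that the index shift $i\mapsto i-1$ flips the sign, which is precisely why the two terms on the left add up to the identity rather than cancel. In the bottom degree $n=1$, where $\partial_0^0=0$, the same computation gives instead $\partial_1^0 h_1=\mathrm{id}_{L^2(X)}-E$, where $E$ is the operator sending $g$ to the constant function $x\mapsto\int_X g\,d\mu$; since $\mu(X)=1$, $E$ is the orthogonal projection of $L^2(X)$ onto the constants, so $\mathrm{id}-E$ is the orthogonal projection onto $\{1\}^\perp$.

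From here the lemma follows formally. For $\ell>1$ the inclusion $\Img\partial_\ell^0\subseteq\Ker\partial_{\ell-1}^0$ is the already-noted identity $\partial_{\ell-1}^0\circ\partial_\ell^0=0$; conversely, if $g\in L^2(X^\ell)$ with $\partial_{\ell-1}^0 g=0$, then applying the homotopy identity with $n=\ell$ gives $g=\partial_\ell^0(h_\ell g)+h_{\ell-1}(\partial_{\ell-1}^0 g)=\partial_\ell^0(h_\ell g)\in\Img\partial_\ell^0$. For $\ell=1$, the identity $\partial_1^0 h_1=\mathrm{id}-E$ shows $\Img\partial_1^0\supseteq\Img(\mathrm{id}-E)=\{1\}^\perp$, while for any $F\in L^2(X^2)$ Fubini's theorem yields $\int_X(\partial_1^0F)(x)\,d\mu(x)=\int_{X^2}F(t,x)\,d\mu_2(t,x)-\int_{X^2}F(x,t)\,d\mu_2(x,t)=0$, so $\partial_1^0 F\in\{1\}^\perp$; hence $\Img\partial_1^0=\{1\}^\perp$. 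I do not expect a genuine obstacle here: the whole content is the homotopy $h_n$, and the only real care needed is the sign in the index shift and the separate treatment of the bottom of the complex, where $\partial_0^0=0$ forces the zeroth homology to be $L^2(X)/\{1\}^\perp\cong\mathbb R$ (the constants) rather than $0$.
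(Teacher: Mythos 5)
Your proof is correct, and it is essentially the paper's own argument: the paper also contracts the complex by inserting a dummy coordinate (it writes $g(x,y)=h(y)$ for $\ell=1$ and $g(x_0,\dots,x_\ell)=(-1)^\ell h(x_0,\dots,x_{\ell-1})$ for $\ell>1$, then verifies $\partial_\ell^0 g=h$ using $\partial_{\ell-1}^0h=0$), which is exactly your cone homotopy applied to an element of the kernel. Your packaging as the identity $\partial_n^0h_n+h_{n-1}\partial_{n-1}^0=\mathrm{id}$ is a slightly more systematic presentation of the same computation, including the same treatment of the bottom degree via $\partial_1^0h_1=\mathrm{id}-E$.
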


Under that assumption $K\equiv1$, we can already finish the proof of
Theorem \ref{Theorem1}; the general case is proven later. Indeed
Lemma~\ref{Lemma2} implies that $\Img\partial_{\ell}$ is closed for
all $\ell$ since null spaces and orthogonal complements are closed,
and in fact shows that the homology \eqref{2.5} in this case is
trivial for $\ell>0$ and one dimensional for $\ell=0$.

\begin{proof}[Proof of Lemma \ref{Lemma2}]
 Let $h\in\{1\}^{\perp}\subset L^2(X)$. Define $g\in
  L^2(X^2)$ by $g(x,y)=h(y)$. Then from \eqref{2.4}
$$
\partial_1^0g(x_0)=\int_X(g(t,x_0)-g(x_0,t))\, d\mu(t)=\int_X (h(x_0)-h(t))\,
d\mu(t)=h(x_0)
$$
since $\mu(X)=1$ and $\int_X h\, d\mu=0$. It can be easily checked that
$\partial_1^0$ maps $L^2(X^2)$ into $\{1\}^{\perp}$, thus proving the lemma
for $\ell=1$. For $\ell>1$ let $h\in\Ker\partial_{\ell-1}^0$. Define
$g\in L^2(X^{\ell+1})$ by
$g(x_0,\dots,x_{\ell})=(-1)^{\ell}h(x_0,\dots,x_{\ell-1})$. Then, by \eqref{2.4}

\begin{align*}
\partial_{\ell}^0 g(x_0,\dots,x_{\ell-1}) &=\sum_{i=0}^{\ell}(-1)^i\int_X g(x_0,\dots,x_{i-1},t,x_i,\dots,x_{\ell-1})\, d\mu(t)\\
&=(-1)^{\ell}\sum_{i=0}^{\ell-1}(-1)^i\int_X h(x_0,\dots,x_{i-1},t,x_i,\dots,x_{\ell-2})\, d\mu(t)\\
&+(-1)^{2\ell}h(x_0,\dots,x_{\ell-1})\\
&=(-1)^{\ell}\partial_{\ell-1}^0 h(x_0,\dots,x_{\ell-2})+h(x_0,\dots,x_{\ell-1})\\
&=h(x_0,\dots,x_{\ell-1})
\end{align*}
since $\partial^0_{\ell-1} h=0$, finishing the proof.
\end{proof}

The next lemma give some general conditions on $K$ that guarantee
$\partial_{\ell}$ has closed range.
\begin{lemma}\label{Lemma3}
  Assume that $K(x,y)\geq \sigma>0$ for all $x,y\in X$. Then
  $\Img\partial_{\ell}$ is closed for all $\ell$. In fact,
  $\Img\partial_{\ell}=\Ker\partial_{\ell-1}$ for $\ell>1$ and has
  co-dimension one in $L^2(X)$ for $\ell=1$.
\end{lemma}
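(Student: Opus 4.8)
The plan is to reduce the general case $K(x,y)\ge\sigma>0$ to the constant-kernel case already handled in Lemma \ref{Lemma2}, by exhibiting an explicit bounded isomorphism of chain complexes between the complex $(L^2(X^{\ell+1}),\partial_\ell)$ built from $K$ and the complex $(L^2(X^{\ell+1}),\partial_\ell^0)$ built from $K\equiv 1$. The candidate for this isomorphism is multiplication by a product of square roots of $K$: for $g\in L^2(X^{\ell+1})$ set
\[
(\Phi_\ell g)(x_0,\dots,x_\ell)=\Bigl(\prod_{0\le i<j\le \ell}\sqrt{K(x_i,x_j)}\Bigr)\,g(x_0,\dots,x_\ell).
\]
Since $\sigma\le K\le\|K\|_\infty$ almost everywhere, the multiplier is bounded above and bounded below away from zero, so $\Phi_\ell$ is a bounded linear isomorphism of $L^2(X^{\ell+1})$ with bounded inverse (multiplication by the reciprocal product). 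The main computational step is to check the intertwining relation $\partial_\ell\circ\Phi_{\ell+1}=\Phi_\ell\circ\partial_\ell^0$ (equivalently for the coboundaries, $\delta_\ell\circ\Phi_{\ell+1}=\Phi_\ell\circ\delta_\ell^0$, whichever is cleaner). This is a bookkeeping exercise: when one deletes the variable $x_i$ from an $(\ell+1)$-tuple, the product $\prod_{a<b}\sqrt{K(x_a,x_b)}$ splits as $\bigl(\prod_{j\ne i}\sqrt{K(x_i,x_j)}\bigr)\cdot\bigl(\prod_{a<b,\;a,b\ne i}\sqrt{K(x_a,x_b)}\bigr)$, and the first factor is exactly the weight appearing in the definition \eqref{2.3}–\eqref{2.4}, while the second is the weight for $\Phi$ on the shorter tuple; the integration over the inserted variable $t$ in \eqref{2.4} works the same way. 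I would write this out for $\delta$ since the formula \eqref{2.3} is slightly simpler, then dualize.

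Granting the intertwining relation, the conclusion is immediate: $\Phi_\ell$ carries $\Img\partial_{\ell}^0$ onto $\Img\partial_\ell$ and $\Ker\partial_{\ell-1}^0$ onto $\Ker\partial_{\ell-1}$, and being a topological isomorphism it sends closed subspaces to closed subspaces. Hence Lemma \ref{Lemma2} transports verbatim: $\Img\partial_\ell=\Ker\partial_{\ell-1}$ for $\ell>1$, and $\Img\partial_1$ is the image under $\Phi_0$ of $\{1\}^\perp$, which is a closed subspace of codimension one in $L^2(X)$. (Note it need no longer be literally $\{1\}^\perp$, but it is still a closed hyperplane, which is all that is claimed.) Closedness of $\Img\partial_\ell$ for every $\ell$ then follows, completing the proof; via Proposition \ref{Proposition4} the same holds for $\delta_\ell=\partial_{\ell+1}^\ast$, and Lemma \ref{Lemma1} applies, which is what finishes Theorem \ref{Theorem1} in the general case.

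The step I expect to be the only real obstacle is verifying the intertwining identity cleanly, because the signs, the restricted products $\prod_{j\ne i}$ versus $\prod_{j\ne k,i}$, and the relabeling of variables after deleting $x_i$ all have to line up — exactly the same type of manipulation as in the proof of Proposition \ref{Proposition2}, and it should succeed for the same reason (symmetry of $K$ makes the pairwise-product weights behave multiplicatively under deletion). Everything else — boundedness of $\Phi_\ell$ and its inverse, preservation of the alternating property (the multiplier $\prod_{i<j}\sqrt{K(x_i,x_j)}$ is symmetric, hence $\mathrm{Alt}$-equivariant), and the transfer of the closed-range and codimension statements — is routine once the algebraic identity is in hand.
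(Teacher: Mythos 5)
Your proposal is correct and is essentially the paper's own argument: the paper also conjugates $\partial_\ell$ into $\partial_\ell^0$ via the multiplication operator $M_\ell f=\prod\sqrt{K(x_j,x_k)}\,f$, notes that $\sigma\le K\le\|K\|_\infty$ makes $M_\ell$ a bounded isomorphism with bounded inverse, and transports Lemma \ref{Lemma2}. (Only watch the bookkeeping: the correct relation is $\delta\circ N_\ell=N_{\ell+1}\circ\delta^0$ with $N$ the unordered-pair product you wrote, hence $\partial_\ell=N_\ell^{-1}\circ\partial_\ell^0\circ N_{\ell+1}$ after taking adjoints, which is the opposite placement of the inverse from what you stated --- immaterial for the conclusion, since either way the operators are conjugate by a topological isomorphism.)
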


\begin{proof}
  Let $M_{\ell}\colon L^2(X^{\ell})\to L^2(X^{\ell})$ be the multiplication operator
$$M_{\ell}(f)(x_0,\dots,x_{\ell})=\prod_{j\neq k}\sqrt{K(x_j,x_k)} f(x_0,\dots,x_{\ell})
$$
Since $K\in L^{\infty}(X^2)$ and is bounded below by $\sigma$, $M_{\ell}$
clearly defines an isomorphism. The Lemma then follows from Lemma \ref{Lemma2}, and the
observation that
\[
\partial_{\ell}=M^{-1}_{\ell-1}\circ\partial^0_{\ell}\circ M_{\ell}.\qedhere
\]
\end{proof}

Theorem \ref{Theorem1} now follows from the Hodge Lemma and Lemma
\ref{Lemma3}. We note that Lemma \ref{Lemma2}, Lemma \ref{Lemma3} and
Theorem \ref{Theorem1} hold in the alternating setting, when
$L^2(X^{\ell})$ is replaced with $L^2_a(X^{\ell})$; so the cohomology
is also trivial in that setting.

For background, one could see Munkres \cite{17} for the algebraic topology,
Lang \cite{14} for the analysis, and Warner \cite{22} for the geometry.

\section{Metric spaces}\label{Section3}

For the rest of the paper, we assume that $X$ is a complete, separable metric
space, and that $\mu$ is a Borel probability measure on $X$, and $K$ is a
continuous function on $X^2$ (as well as symmetric, non-negative and bounded as
in Section \ref{Section2}). We will also assume throughout the rest of the
paper that
$\mu(U)>0$ for $U$ any nonempty open set.

The goal of this section is a Hodge Decomposition for continuous alternating
functions. Let $C(X^{\ell+1})$ denote the continuous functions on
$X^{\ell+1}$. We will use the following notation:
$$
C^{\ell+1}=C(X^{\ell+1})\cap L^2_a(X^{\ell+1})\cap L^{\infty}(X^{\ell+1}).
$$
Note that
$$
\delta\colon C^{\ell+1}\to C^{\ell+2}\ \ \text{and}\ \ \partial\colon  C^{\ell+1}\to
C^{\ell}
$$
are well defined linear maps. The only thing to check is that $\delta (f)$ and
$\partial (f)$ are continuous and bounded if $f\in C^{\ell+1}$. In the case of
$\delta (f)$ this is obvious from \eqref{2.3}. The following proposition from
analysis, \eqref{2.4} and the fact that $\mu$ is Borel imply that $\partial
(f)$ is
bounded and continuous.
\begin{proposition} %\label{Proposition6}
Let $Y$ and $X$ be metric spaces, $\mu$ a
  Borel measure on $X$, and $M, g\in C(Y\times X)\cap L^{\infty}(Y\times
  X)$. Then $dg\in C(X)\cap L^{\infty}(X)$, where
$$
dg(x)=\int_X M(x,t)g(x,t)\, d\mu(t).
$$
\end{proposition}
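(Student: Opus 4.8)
The plan is to handle boundedness and continuity separately; both will reduce to elementary measure theory once we exploit the finiteness of $\mu$ (in the setting of this paper $\mu(X)=1$) together with the two standing hypotheses on the integrand, namely that $M$ and $g$ are bounded and jointly continuous on $Y\times X$. Here $x$ ranges over $Y$ and the integration variable $t$ over $(X,\mu)$.

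For boundedness I would simply estimate, for every $x$,
\[
|dg(x)|\le\int_X |M(x,t)|\,|g(x,t)|\,d\mu(t)\le\|M\|_\infty\,\|g\|_\infty\,\mu(X)<\infty,
\]
which is a bound uniform in $x$, so $dg\in L^\infty$. The same inequality shows that the defining integral converges for each fixed $x$: the map $t\mapsto M(x,t)g(x,t)$ is continuous on $X$, hence Borel measurable, and is dominated by the constant $\|M\|_\infty\|g\|_\infty$, which lies in $L^1(\mu)$ precisely because $\mu$ is finite.

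For continuity, since $X$ and $Y$ are metric spaces it suffices to verify sequential continuity: fix $x$ and a sequence $x_n\to x$ and show $dg(x_n)\to dg(x)$. I would write
\[
dg(x_n)-dg(x)=\int_X\bigl(M(x_n,t)g(x_n,t)-M(x,t)g(x,t)\bigr)\,d\mu(t),
\]
observe that for each fixed $t$ joint continuity of $M$ and $g$ gives continuity in the first variable, so the integrand tends to $0$ pointwise in $t$, and note that all these integrands are dominated by the constant $2\|M\|_\infty\|g\|_\infty\in L^1(\mu)$. The Dominated Convergence Theorem then forces $dg(x_n)-dg(x)\to 0$, which is the desired continuity.

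The argument is essentially routine; the only points needing a moment of care are (i) the reduction of continuity to sequential continuity, legitimate exactly because the spaces are metric, and (ii) the observation that no uniform continuity of $M$ or $g$ is required — the Dominated Convergence Theorem absorbs the lack of uniformity — so in particular no compactness of $X$ is needed, only finiteness of $\mu$. If one insisted on avoiding dominated convergence, an alternative valid when $X$ is compact would be to invoke uniform continuity of $M$ and $g$ on $Y\times X$ to estimate $\sup_t|M(x_n,t)g(x_n,t)-M(x,t)g(x,t)|$ directly, but the dominated-convergence route is cleaner and matches the stated generality.
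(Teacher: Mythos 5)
Your proposal is correct and follows essentially the same route as the paper, which likewise gets boundedness directly from $\|M\|_\infty\|g\|_\infty$ and finiteness of $\mu$, and continuity from the Dominated Convergence Theorem; you have merely written out the details the paper leaves implicit. Your explicit remarks that $\mu$ must be finite and that $x$ really ranges over $Y$ are both sound readings of how the proposition is used.
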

\begin{proof}
 The fact that $dg$ is bounded follows easily from the
  definition and properties of $M$ and $g$, and continuity follows from a
  simple application of the Dominated Convergence Theorem, proving the
  proposition.
\end{proof}
Therefore we have the chain complexes:
\begin{equation*}
  \begin{CD}
    \cdots @>{\partial_{\ell+1}} >> C^{\ell+1} @> {\partial_{\ell}} >>
    C^{\ell} @> {\partial_{\ell -1}} >> \cdots C^1 @>{ \partial_0} >> 0
  \end{CD}
\end{equation*}
and
\begin{equation*}
\begin{CD}
  0 @>>> C^1 @>{\delta_0} >> C^2 @>{\delta_1} >> \cdots @>{\delta_{\ell-1} }>>
  C^{\ell+1} @>{\delta_{\ell}} >> \cdots
\end{CD}
\end{equation*}

\noindent In this setting we will prove

\begin{theorem}\label{Theorem2}
  Assume that $K$ satisfies the hypotheses of Theorem \ref{Theorem1}, and is
  continuous. Then we have the orthogonal (with respect to $L^2$), direct sum
  decomposition
$$
C^{\ell+1}=\delta (C^{\ell})\oplus \partial (C^{\ell+2})\oplus \Ker_C
\Delta
$$
where $\Ker_C \Delta$ denotes the subspace of elements in $\Ker
\Delta$ that are in $C^{\ell+1}$.
\end{theorem}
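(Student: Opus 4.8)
The plan is to deduce the continuous Hodge decomposition from the $L^2$ Hodge decomposition already available via Theorem \ref{Theorem1} (or its alternating version), by showing that the three $L^2$-summands $\Img\delta_{\ell-1}$, $\Img\delta_\ell^\ast$ and $\Ker\Delta_\ell$, when intersected appropriately with $C^{\ell+1}$, still span $C^{\ell+1}$ and remain orthogonal. Orthogonality is inherited for free from the $L^2$ statement, since $C^{\ell+1}\subset L^2_a(X^{\ell+1})$ and the three target subspaces are pairwise $L^2$-orthogonal there; so the whole content is the spanning statement. Thus, given $f\in C^{\ell+1}$, I would write its $L^2$-Hodge decomposition $f=\delta u + \partial v + h$ with $u\in L^2_a(X^\ell)$, $v\in L^2_a(X^{\ell+2})$, $h\in\Ker\Delta_\ell$, and then argue that each of the three pieces is in fact continuous and bounded, i.e. lies in $C^\ell$, $C^{\ell+2}$, $C^{\ell+1}$ respectively (after replacing $\delta u$ by $\delta(\operatorname{Alt} u)$ etc., which does not change the summand).

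The key steps, in order: (1) Invoke Theorem \ref{Theorem1} / Lemma \ref{Lemma1} to get the orthogonal $L^2_a$-decomposition and in particular that $\delta$ and $\delta^\ast$ have closed range, so the Corollary after Lemma \ref{Lemma1} gives that $\delta_\ell\colon\Img\delta_\ell^\ast\to\Img\delta_\ell$ and $\delta_\ell^\ast\colon\Img\delta_\ell\to\Img\delta_\ell^\ast$ are topological isomorphisms. (2) Given $f\in C^{\ell+1}$, note $\delta f\in C^{\ell+2}$ and $\partial f=\delta^\ast f\in C^\ell$ by the remarks preceding the theorem (continuity of $\delta$ is formula \eqref{2.3}; continuity of $\partial$ is the Proposition just proved using Dominated Convergence). (3) Recover the potentials explicitly: since $\delta f$ is closed, write $\delta f=\delta\delta^\ast w$ where $w$ is obtained by inverting the isomorphism $\delta\delta^\ast$ on $\Img\delta$; concretely one wants $\partial v$ with $\delta\partial v=\delta f$ and then $h':=f-\partial v$ is closed, and similarly peel off $\delta u$. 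The real work is to show the potentials inherit continuity. Here I would use that $\delta\delta^\ast=\Delta_\ell$ restricted to $\overline{\Img\delta_{\ell-1}}$ is invertible with bounded inverse, write that inverse as a Neumann-type series or—better—use the explicit contracting homotopy behind Lemma \ref{Lemma2}/\ref{Lemma3}: the maps producing preimages under $\partial^0_\ell$ there are given by integral formulas (e.g. $g(x,y)=h(y)$ type constructions, composed with the multiplication isomorphisms $M_\ell$), and such integral operators manifestly send $C^{\ell+1}$ into $C^\ell$ by the same Dominated Convergence argument. (4) Feed this back: $\partial v$ is continuous and bounded, hence $h'=f-\partial v\in C^{\ell+1}$ is closed; repeat on the cochain side to split off $\delta u$ with $u\in C^\ell$; what remains is harmonic and continuous, i.e. in $\Ker_C\Delta$.

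The main obstacle is Step (3)/(4): showing the $L^2$-potentials $u,v$ can be chosen continuous and bounded, not merely in $L^2_a$. The clean way is to bypass abstract Hilbert-space inversion and instead carry the explicit chain homotopy from the proof of Lemma \ref{Lemma2} (for $K\equiv1$) conjugated by the multiplication operators $M_\ell$ as in Lemma \ref{Lemma3} (for general $K$ bounded below): that homotopy $h$ with $hf(x_0,\dots,x_n)=\frac1n\sum_i f(x_i,x_0,\dots,x_n)$-type averaging, and the integral operators in \eqref{2.4}, visibly preserve continuity and boundedness, so $\partial h + h\partial = \mathrm{id}$ on the relevant complex gives continuous primitives directly. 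One then only needs to reconcile this algebraically-produced decomposition with the orthogonal one, which is done exactly as in Lemma \ref{Lemma1}: project the continuous primitive onto the right summand using the (already $L^2$-orthogonal, closed-range) splitting, and check the correction term stays continuous because it is again a difference of continuous forms lying in $\Img\delta_{\ell-1}$, hence expressible via the continuous homotopy. I expect uniqueness of the representatives and the orthogonality to be routine given Theorem \ref{Theorem1}, so essentially all the care goes into this regularity-of-potentials bookkeeping.
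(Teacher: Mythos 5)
Your overall strategy --- inherit orthogonality from the $L^2$ decomposition of Theorem \ref{Theorem1} and reduce everything to showing that the three projections of a continuous form, and the potentials, are again continuous and bounded --- is exactly the paper's, and your key tool (the integral formula for $\partial$ on alternating forms, i.e.\ the explicit contracting homotopy $hg(x_0,\dots,x_{\ell-1})=\int_X g(t,x_0,\dots,x_{\ell-1})\,d\mu(t)=\tfrac1{\ell+1}\partial g$, which visibly preserves continuity and boundedness) is also the paper's key tool. The difference is in the bookkeeping: the paper first proves that $P_1$ preserves continuity via the identity $g=\tfrac1{\ell+1}\sum_i(-1)^i\partial g(x_0,\dots,\hat x_i,\dots,x_\ell)$ valid for closed $g$ (Proposition \ref{Proposition6}), deduces the same for $P_2$, then proves a Poisson-regularity statement ($\Delta u=f$ continuous implies $u$ continuous, Proposition \ref{Proposition7}) and uses it to produce continuous potentials (Propositions \ref{Proposition8} and \ref{Proposition9}); you propose to read the continuous potentials directly off the homotopy, which works and is if anything slightly shorter for the analogue of Proposition \ref{Proposition8}.

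The one step you must tighten is the ``reconciliation'': as written, you justify continuity of the correction term by calling it ``a difference of continuous forms lying in $\Img\delta_{\ell-1}$,'' but one of the two forms in that difference is the orthogonal projection $P_1f$ itself, whose continuity is precisely what is being proved --- so the argument is circular as stated. There are two clean fixes. Either observe that no reconciliation is needed: since $h$ agrees with $\tfrac1{\ell+1}\partial$ on alternating $\ell$-forms (Proposition \ref{Proposition5}), the homotopy decomposition $f=\delta(hf)+h(\delta f)$ already has its first term in $\Img\delta$ and its second in $\Img\partial$, so by uniqueness of the orthogonal splitting it coincides with $P_1f+P_2f$ (plus the constant $P_3f$ when $\ell=0$). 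Or argue as the paper does: $P_1f$ is closed and $\partial(P_1f)=\partial f$ is continuous, so the closedness identity expresses $P_1f$ as an explicit continuous function of $\partial f$. A minor point: the homotopy you quote, $hf(x_0,\dots,x_n)=\frac1n\sum_if(x_i,x_0,\dots,x_n)$, is the one comparing $\mathrm{Alt}$ with the identity, not the contracting homotopy of the $\delta$-complex; the latter is the averaging over the first variable written above.
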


As in Theorem \ref{Theorem1}, the third summand is trivial except when
${\ell}=0$ in which
case it consists of the constant functions. We first assume that $K\equiv
1$. The proof follows from a few propositions. In the remainder of the
section, $\Img\delta$ and $\Img\partial$ will refer to the image
spaces of $\delta$ and $\partial$ as operators on $L^2_a$. The next
proposition gives formulas for $\partial$ and $\Delta$ on alternating
functions.

\begin{proposition}\label{Proposition5}
  For $f\in L^2_a(X^{\ell+1})$ we have
$$
\partial f(x_0,\dots,x_{\ell-1})=(\ell+1)\int_X f(t,x_0,\dots,x_{\ell-1})\,
d\mu(t)
$$
and
$$
\Delta
f(x_0,\dots,x_{\ell})=(\ell+2)f(x_0,\dots,x_{\ell})-\frac{1}{\ell+1}\sum_{i=0}^{\ell} \partial
f(x_0,\dots,\hat x_i,\dots,x_{\ell}).
$$
\end{proposition}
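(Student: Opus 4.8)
The plan is to start from the general formulas~\eqref{2.3} and~\eqref{2.4} with $K\equiv 1$ and simplify them using the alternating hypothesis on $f$. For the first formula, recall that
$$
\partial f(x_0,\dots,x_{\ell-1})=\sum_{i=0}^{\ell}(-1)^i\int_X f(x_0,\dots,x_{i-1},t,x_i,\dots,x_{\ell-1})\,d\mu(t).
$$
In the $i$-th integrand, $t$ sits in slot $i$; I would move it to slot $0$ by the cyclic transposition $(0\,1\,\cdots\,i)$, which is a product of $i$ transpositions, hence has sign $(-1)^i$. Since $f$ is alternating, $f(x_0,\dots,x_{i-1},t,x_i,\dots,x_{\ell-1})=(-1)^i f(t,x_0,\dots,x_{\ell-1})$. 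The sign $(-1)^i$ then cancels the $(-1)^i$ already present, so every term equals $\int_X f(t,x_0,\dots,x_{\ell-1})\,d\mu(t)$, and there are $\ell+1$ of them; this yields the claimed formula for $\partial f$.

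For the second formula I would compute $\Delta f=\partial\delta f+\delta\partial f$ termwise, using the just-proved expression for $\partial$ on alternating functions (note $\delta$ preserves the alternating property, so $\partial$ applies to $\delta f$ in this simplified form). Applying the first formula to $\delta f\in L^2_a(X^{\ell+2})$ gives $\partial\delta f(x_0,\dots,x_\ell)=(\ell+2)\int_X \delta f(t,x_0,\dots,x_\ell)\,d\mu(t)$, and expanding $\delta f(t,x_0,\dots,x_\ell)$ via~\eqref{2.3} with $K\equiv 1$ produces the ``diagonal'' term $(\ell+2)f(x_0,\dots,x_\ell)$ (from deleting $t$) plus a sum of integrals of $f$ with one $x_i$ replaced by the integration variable. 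On the other side, $\delta\partial f(x_0,\dots,x_\ell)=\sum_{i=0}^\ell (-1)^i\,\partial f(x_0,\dots,\hat x_i,\dots,x_\ell)$, and substituting the first formula again, $\partial f(x_0,\dots,\hat x_i,\dots,x_\ell)=(\ell+1)\int_X f(t,x_0,\dots,\hat x_i,\dots,x_\ell)\,d\mu(t)$. Adding the two contributions, the ``mixed'' integral terms must combine so that what survives is $(\ell+2)f$ together with $-\frac{1}{\ell+1}\sum_i \partial f(x_0,\dots,\hat x_i,\dots,x_\ell)$; I would track the signs coming from moving the integration variable into the first slot inside each $\delta\partial f$ term (again using the alternating property) to see that the coefficient works out to exactly $\frac{1}{\ell+1}$ with a minus sign.

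The main obstacle is purely bookkeeping: keeping the permutation signs straight when normalizing the position of the integration variable, and verifying that the collection of one-variable-replaced integral terms arising from $\partial\delta f$ exactly matches (up to the stated scalar) the terms $\partial f(x_0,\dots,\hat x_i,\dots,x_\ell)$ appearing in $\delta\partial f$. Once the alternating symmetry is used consistently to write every such term as $\pm\int_X f(t,\dots)\,d\mu(t)$ with $t$ in the first slot, the matching is immediate and the identity falls out; there is no analytic subtlety since everything is a finite sum of $L^2$-bounded integral operators.
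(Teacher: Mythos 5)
Your proposal is correct and takes essentially the same route as the paper, whose proof simply asserts that both formulas follow from \eqref{2.3}, \eqref{2.4} and the alternating property by direct calculation---your write-up just makes the permutation-sign bookkeeping explicit. One caveat: carrying out your computation of $\partial\delta f+\delta\partial f$ actually yields $(\ell+2)f-\frac{1}{\ell+1}\sum_{i=0}^{\ell}(-1)^i\,\partial f(x_0,\dots,\hat x_i,\dots,x_{\ell})$, i.e.\ the sum should carry the sign $(-1)^i$ (as it does in the later uses of this formula in Propositions \ref{Proposition6} and \ref{Proposition7}), so the displayed statement is missing that factor rather than your argument being off.
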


\begin{proof}
 The first formula follows immediately from \eqref{2.4} and the fact
  that $f$ is alternating. The second follows from a simple calculation using
  \eqref{2.3}, \eqref{2.4} and the fact that $f$ is alternating.
\end{proof}

Let $P_1$, $P_2$, and $P_3$ be the orthogonal projections implicit in Theorem
\ref{Theorem1}
$$
P_1\colon L^2_a(X^{\ell+1})\to \Img \delta, \ P_2\colon L^2_a(X^{\ell+1})\to
\Img \partial,\ \text{and} \ P_3\colon L^2_a(X^{\ell+1})\to\Ker\Delta
$$
\begin{proposition}\label{Proposition6} Let $f\in C^{\ell+1}$. Then $P_1(f)\in
  C^{\ell+1}$.
\end{proposition}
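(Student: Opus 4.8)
The plan is to show that the orthogonal projection $P_1$ onto $\Img\delta$ preserves continuity and boundedness by producing an explicit continuous preimage. Since we are in the case $K\equiv 1$ and working with alternating functions, Theorem~\ref{Theorem1} gives $\Img\delta_{\ell-1}=\Ker\delta_\ell$ for $\ell\ge 1$, and the Hodge decomposition tells us $P_1(f)=f-P_2(f)-P_3(f)$. The component $P_3(f)$ is harmless: it is zero for $\ell>0$ and a constant (hence continuous and bounded) for $\ell=0$, so the real content is to understand $P_2(f)$, the projection onto $\Img\partial$. First I would use the Corollary to the Hodge Lemma: $\delta_{\ell-1}\colon \Img\delta_{\ell-1}^\ast\to\Img\delta_{\ell-1}$ is an isomorphism, so there is a unique $u\in\Img\delta_{\ell-1}^\ast=\Img\partial_\ell$ with $\delta u=P_1(f)$; equivalently $\delta u=\delta f$ since $f-P_1(f)\in\Img\partial\oplus\Ker\Delta\subseteq\Ker\delta$. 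So it suffices to construct such a $u$ that is continuous and bounded, because then $P_1(f)=\delta u\in C^{\ell+2}$, and a re-indexing of the statement (or applying it at the appropriate degree) gives the claim.

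The key step is an explicit homotopy formula. For $K\equiv1$ the proof of Lemma~\ref{Lemma2} already exhibits, for any $g\in\Ker\partial_{\ell-1}^0$, a preimage under $\partial_\ell^0$ built by a simple "cone" construction $g\mapsto (\pm)g(x_0,\dots,x_{\ell-1})$ prepended with an extra slot, or averaging $h f(x_0,\dots,x_n)=\frac1n\sum_i f(x_i,x_0,\dots,x_n)$ as in the Remark after Theorem~\ref{Theorem1}. I would dualize this: applying the analogous cone/prepending operator to $f$ produces an element $w\in C^{\ell}$ (it is continuous and bounded because prepending a variable and integrating against the finite measure $\mu$ preserves $C\cap L^\infty$, by the Proposition on $dg(x)=\int_X M(x,t)g(x,t)\,d\mu(t)$ from Section~\ref{Section3}), with the property that $\delta w$ differs from $f$ by something in $\Ker\delta$ lying in the "other" summands — concretely, one arranges $f=\delta w + (\text{coexact}+\text{harmonic})$ via the standard chain-homotopy identity $\delta h + h\delta = \mathrm{id}-(\text{something simple})$ on the alternating complex. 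Then $P_1(f)=\delta w' $ where $w'$ is the $\Img\partial$-component of $w$; but rather than chase $w'$, I would simply observe $P_1(f)=P_1(\delta w)=\delta w - P_1^{\perp}(\delta w)$ and note $\delta w\in C^{\ell+1}$ already, while $P_1^\perp(\delta w)=P_2(\delta w)+P_3(\delta w)$ must be handled.

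This exposes the main obstacle, which is the same one lurking throughout: showing that the $\Img\partial$-projection $P_2$ (equivalently $P_3$, the harmonic projection) of a continuous bounded function is again continuous and bounded. I expect the cleanest route is to avoid $P_2$ entirely by choosing the homotopy so that $\delta w$ is \emph{already} the orthogonal projection, i.e. so that $f-\delta w\perp\Img\delta$. Using Proposition~\ref{Proposition5}'s explicit formulas for $\partial$ and $\Delta$ on alternating functions, one can compute $\delta w$ for the cone operator $w(x_0,\dots,x_{\ell-1})=\int_X f(t,x_0,\dots,x_{\ell-1})\,d\mu(t)$ (up to a constant this is $\frac{1}{\ell+1}\partial f$), and check directly that $f-\frac{1}{(\ell+1)(\ell+2)}\,\delta\partial f$ — or the appropriate combination making $\Delta$ act as stated — is $L^2$-orthogonal to $\Img\delta$; the orthogonality is where the closed-range facts from the Hodge Lemma and the algebraic identity $\delta\partial+\partial\delta$-type relation of Proposition~\ref{Proposition5} get used. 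Granting that, $P_1(f)$ is a fixed linear combination of $f$ and $\delta\partial f$, each manifestly in $C^{\ell+1}$ by \eqref{2.3}, the Proposition of Section~\ref{Section3}, and Proposition~\ref{Proposition5}, which completes the proof. The delicate point to get right is the exact scalar/sign bookkeeping in Proposition~\ref{Proposition5} so that the candidate for $P_1(f)$ genuinely lands in $\Img\delta$ and is genuinely orthogonal to its complement — once the formula is pinned down, continuity and boundedness are immediate.
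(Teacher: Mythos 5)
Your final paragraph contains a correct proof, and in substance it lands on the same identity as the paper: both arguments reduce to $P_1(f)=\frac{1}{\ell+1}\,\delta\partial f$, after which continuity and boundedness are immediate from \eqref{2.3} and the integration proposition of Section~\ref{Section3}. Two corrections are needed to make your version go through. First, the constant is $\frac{1}{\ell+1}$, not $\frac{1}{(\ell+1)(\ell+2)}$: writing $c\,\delta\partial f$ for the candidate projection, the requirement $f-c\,\delta\partial f\perp\Img\delta$ is equivalent to $\partial f=c\,\partial\delta\partial f$, and $\partial\delta\partial f=\Delta(\partial f)=(\ell+1)\partial f$ by Proposition~\ref{Proposition5} applied to the $(\ell-1)$-form $\partial f$ together with $\partial^2=0$ (this is exactly the computation in the proof of Proposition~\ref{Proposition7}), so $c=\frac{1}{\ell+1}$. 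Second, your justification ``$f-P_1(f)\in\Img\partial\oplus\Ker\Delta\subseteq\Ker\delta$'' is false: $\Img\partial_{\ell+1}=\Img\delta_\ell^{\ast}$ is the orthogonal complement of $\Ker\delta_\ell$, not a subspace of it (and the accompanying equation $\delta u=\delta f$ does not typecheck). Fortunately nothing in your final argument depends on that claim, nor on the cone/chain-homotopy detour of your middle paragraph, which can be deleted. For comparison, the paper obtains the same formula slightly more economically: it sets $g=P_1(f)$, observes that $\partial g=\partial f$ is continuous and bounded (since $P_2(f)$ and $P_3(f)$ are annihilated by $\partial$), and then integrates the relation $\delta g(t,x_0,\dots,x_\ell)=0$ over $t$ to reconstruct $g$ pointwise as $\frac{1}{\ell+1}\sum_{i}(-1)^i\partial g(x_0,\dots,\hat x_i,\dots,x_\ell)$, i.e.\ as $\frac{1}{\ell+1}\delta\partial f$; this derives the formula from the defining properties of $P_1(f)$ rather than verifying orthogonality of an ansatz, but the two routes are otherwise the same.
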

\begin{proof}
  It suffices to show that $P_1(f)$ is continuous and bounded. Let
  $g=P_1(f)$. It follows from Theorem \ref{Theorem1} that $\partial f=\partial g$, and
  therefore $\partial g$ is continuous and bounded. Since $\delta g=0$, we
  have, for $t,x_0,\dots,x_{\ell}\in X$
$$
0=\delta g(t,x_0,\dots,x_{\ell})=g(x_0,\dots,x_{\ell})-\sum_{i=0}^{\ell}
(-1)^i g(t,x_0,\dots,\hat x_i,\dots,x_{\ell}).
$$
Integrating over $t\in X$ gives us
\begin{align*} g(x_0,\dots,x_{\ell}) &= \int_X g(x_0,\dots,x_{\ell})\, d\mu(t)=\sum_{i=0}^{\ell} (-1)^i \int_X g(t,x_0,\dots,\hat x_i,\dots,x_{\ell})\, d\mu(t)\\
&=\frac{1}{\ell+1}\sum_{i=0}^{\ell} (-1)^i\partial g(x_0,\dots,\hat
x_i,\dots,x_{\ell}).
\end{align*}
As $\partial g$ is continuous and bounded, this implies $g$ is continuous and
bounded.
\end{proof}

\begin{corollary}
  If $f\in C^{\ell+1}$, then $P_2(f)\in C^{\ell+1}$.
\end{corollary}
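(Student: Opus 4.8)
The plan is to reduce the statement about $P_2$ to the already-established statement about $P_1$ (Proposition~\ref{Proposition6}), using the Hodge decomposition of Theorem~\ref{Theorem1} together with the isomorphism provided by the Corollary to the Hodge Lemma. Recall that for $f\in C^{\ell+1}\subset L^2_a(X^{\ell+1})$ we have the orthogonal splitting $f=P_1(f)+P_2(f)+P_3(f)$ with $P_1(f)\in\Img\delta$, $P_2(f)\in\Img\partial=\Img\delta^\ast$, and $P_3(f)\in\Ker\Delta$. By Proposition~\ref{Proposition6} we already know $P_1(f)\in C^{\ell+1}$. Since $P_3(f)$ is harmonic, it is either zero (when $\ell>0$) or a constant (when $\ell=0$), hence trivially continuous and bounded; in either case $P_3(f)\in C^{\ell+1}$. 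Therefore $P_2(f)=f-P_1(f)-P_3(f)\in C^{\ell+1}$, since $C^{\ell+1}$ is a vector space.

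In slightly more detail, the one point that needs care is the claim that $P_3(f)$ lies in $C^{\ell+1}$. For $\ell>0$, Theorem~\ref{Theorem1} tells us $\Ker\Delta_\ell=\{0\}$, so $P_3(f)=0$ and there is nothing to prove. For $\ell=0$, $\Ker\Delta_0$ consists of the constant functions on $X$, which are continuous and bounded, so again $P_3(f)\in C^{1}$. Alternatively, and more symmetrically, one could observe that $P_3(f)=f-P_1(f)-P_2(f)$ once $P_2(f)$ is shown continuous, or simply invoke that the orthogonal projection onto the finite-dimensional (indeed at most one-dimensional) space $\Ker\Delta$ is given by integration against a continuous kernel and hence preserves $C^{\ell+1}$ by the same Dominated Convergence argument used for $\partial$ in Section~\ref{Section3}.

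An equally direct route, avoiding even the remark about $P_3$, is to write $P_2(f)$ explicitly. Since $P_2(f)\in\Img\delta^\ast$ and $\delta P_2(f)=0$ would fail in general, instead use that $P_2(f)=\delta^\ast h$ for some $h$ and that the Corollary gives $\delta^\ast\colon\Img\delta\to\Img\delta^\ast$ an isomorphism; applying $\delta^\ast$ to the decomposition of $f$ yields $\delta^\ast f=\delta^\ast P_1(f)$, and since $P_1(f)\in C^{\ell+1}$ and $\delta^\ast=\partial$ maps $C^{\ell+1}$ into $C^{\ell}$, we get $\delta^\ast f=\partial f\in C^{\ell}$ directly. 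One then recovers $P_2(f)$ from $\partial f$ by inverting the isomorphism $\delta\colon\Img\delta^\ast\to\Img\delta$ composed appropriately; but since that inverse need not manifestly preserve continuity, the cleanest argument remains the subtraction $P_2(f)=f-P_1(f)-P_3(f)$.

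\textbf{Main obstacle.} There is essentially no obstacle here: the corollary is a formal consequence of Proposition~\ref{Proposition6} plus the triviality of $\Ker\Delta$ in positive degree (and its consisting of constants in degree $0$). The only thing to be slightly careful about is bookkeeping in degree $\ell=0$, where $P_3$ is nonzero; one must note that constants are bounded and continuous. Everything else is just linearity of $C^{\ell+1}$ and the orthogonal decomposition already in hand.
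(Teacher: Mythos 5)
Your argument is correct and is essentially the paper's own proof: the paper likewise obtains $P_2(f)=f-P_1(f)-P_3(f)$ from the Hodge decomposition of Theorem~\ref{Theorem1}, using Proposition~\ref{Proposition6} for $P_1(f)$ and the fact that $P_3(f)$ is a constant (hence continuous and bounded). The extra alternative routes you sketch are unnecessary; the subtraction argument is exactly what the paper does.
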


This follows from the Hodge decomposition (Theorem \ref{Theorem1}) and the
fact that
$P_3(f)$ is continuous and bounded (being a constant).

The following proposition can be thought of as analogous to a regularity
result in elliptic PDE's. It states that solutions to $\Delta u=f$, $f$
continuous, which are \emph{a priori} in $L^2$ are actually continuous.

\begin{proposition}\label{Proposition7}
  If $f\in C^{\ell+1}$ and $\Delta u=f$, $u\in L^2_a(X^{\ell+1})$ then $u\in
  C^{\ell+1}$.
\end{proposition}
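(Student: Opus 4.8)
The plan is to mimic the argument of Proposition~\ref{Proposition6}: use the explicit formula for $\Delta$ on alternating functions from Proposition~\ref{Proposition5} to solve for $u$ pointwise in terms of data that is already known to be continuous. Write $\Delta u = f$ with $f\in C^{\ell+1}$ and $u\in L^2_a(X^{\ell+1})$. By Proposition~\ref{Proposition5},
$$
f(x_0,\dots,x_\ell)=\Delta u(x_0,\dots,x_\ell)=(\ell+2)u(x_0,\dots,x_\ell)-\frac{1}{\ell+1}\sum_{i=0}^\ell \partial u(x_0,\dots,\hat x_i,\dots,x_\ell),
$$
so it suffices to show that $\partial u$ is continuous (and bounded); then solving the displayed equation for $u$ expresses $u$ as a continuous bounded function. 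Since $\partial u\in L^2_a(X^\ell)$, by the same token it is enough to show $\partial(\partial u)$ and $\Delta(\partial u)$ behave well; but $\partial\partial u=0$, so I only need to control $\Delta(\partial u)$. The key identity is that $\partial$ commutes with $\Delta$ (both are built from $\delta,\delta^\ast$ and $\partial=\delta^\ast$), hence $\Delta(\partial u)=\partial(\Delta u)=\partial f$, which is continuous and bounded since $f\in C^{\ell+1}$ and $\partial$ maps $C^{\ell+1}\to C^\ell$.

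Thus I would run the argument by downward induction on $\ell$. At the bottom degree, $\partial u$ lands in $L^2_a(X^0)$ or $L^2_a(X)$, and one checks directly from Proposition~\ref{Proposition5} that the formula $\Delta v=(\ell+2)v-\frac{1}{\ell+1}\sum_i\partial v(\dots\hat x_i\dots)$ degenerates to $\Delta v = cv$ for a nonzero constant $c$ in degree $0$ (indeed for $\ell=0$, $\tfrac12\Delta_0$ with $K\equiv1$ is $v\mapsto v-\int v$, and on the relevant space the formula still lets one solve for $v$), so continuity of $f$ forces continuity of $u$. For the inductive step: given that the regularity statement holds in degree $\ell-1$, I apply it to $\partial u\in L^2_a(X^\ell)$, which satisfies $\Delta(\partial u)=\partial f\in C^\ell$; the inductive hypothesis gives $\partial u\in C^\ell$, and then the displayed pointwise formula solves for $u$ and yields $u\in C^{\ell+1}$. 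Finally, the general case $K\not\equiv1$ is handled, as elsewhere in Section~\ref{Section2}–\ref{Section3}, by conjugating with the multiplication operators $M_\ell$ of Lemma~\ref{Lemma3}, which are isomorphisms preserving continuity and boundedness since $K$ is continuous, bounded, and bounded below.

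The main obstacle I anticipate is verifying cleanly that $\partial$ commutes with $\Delta$ on the alternating subcomplex and that the coefficient $(\ell+2)$ in front of $u$ in Proposition~\ref{Proposition5} is genuinely invertible in every degree where the induction is run (so that "solving for $u$" is legitimate); one must also be careful that $\partial u$, defined a priori only as an $L^2$ function, really does satisfy $\Delta(\partial u)=\partial f$ as an $L^2$ identity before invoking the lower-degree regularity result. Once those bookkeeping points are pinned down, the continuity and boundedness propagate exactly as in Proposition~\ref{Proposition6}, via an integration over the extra variable and the Dominated Convergence Theorem argument already used for $\partial$ mapping $C^{\ell+2}\to C^{\ell+1}$.
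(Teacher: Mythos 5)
Your proposal is correct and follows essentially the same route as the paper: solve the Proposition~\ref{Proposition5} formula for $u$, reduce to continuity of $\partial u$, and use $\Delta\partial=\partial\Delta$ together with $\partial^2=0$. The downward induction you describe is superfluous, since your own observation that $\partial(\partial u)=0$ already collapses the formula for $\Delta(\partial u)$ to $(\ell+1)\partial u=\partial f$, which is exactly the one-step conclusion the paper draws.
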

\begin{proof}
  From Proposition \ref{Proposition5}, (with $u$ in place of $f$) we have
\begin{align*}
\Delta u(x_0,\dots,x_{\ell}) &=(\ell+2)u(x_0,\dots,x_{\ell})-\frac{1}{\ell+1}\sum_{i=0}^{\ell} \partial u(x_0,\dots,\hat x_i,\dots,x_{\ell})\\
&=f(x_0,\dots,x_{\ell})
\end{align*}
and solving for $u$, we get
$$
u(x_0,\dots,x_{\ell})=\frac{1}{\ell+2}
f(x_0,\dots,x_{\ell})+\frac{1}{(\ell+2)(\ell+1)}\sum_{i=0}^{\ell} \partial
u(x_0,\dots,\hat x_i,\dots,x_{\ell}).
$$
It therefore suffices to show that $\partial u$ is continuous and
bounded. However, it is easy to check that $\Delta \circ \partial=\partial
\circ \Delta$ and thus
$$
\Delta (\partial u)=\partial \Delta u=\partial f
$$
is continuous and bounded. But then, again using Proposition
\ref{Proposition5},
\begin{align*}
\Delta(\partial u)(x_0,\dots,x_{\ell-1}) &=(\ell+1)\partial u (x_0,\dots,x_{\ell-1})\\
&-\frac{1}{\ell}\sum_{i=0}^{\ell-1}(-1)^i \partial (\partial u)(x_0,\dots,\hat
x_i,\dots,x_{\ell-1})
\end{align*}
and so, using $\partial^2=0$ we get
$$
(\ell+1)\partial u=\partial f
$$
which implies that $\partial u$ is continuous and bounded, finishing the
proof.
\end{proof}

  \begin{proposition}\label{Proposition8} If $g\in C^{\ell+1}\cap \Img
    \delta$, then $g=\delta h$ for some $h\in C^{\ell}$.
\end{proposition}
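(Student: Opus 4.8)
The plan is to prove that a continuous, bounded, alternating cocycle $g$ which is a coboundary in $L^2_a$ is in fact the coboundary of a continuous, bounded, alternating $\ell$-form. The natural strategy is to produce an explicit primitive using the same kind of integration formula that appeared in the proof of Lemma~\ref{Lemma2}, and then to upgrade it to a continuous one using the regularity already established in Propositions~\ref{Proposition6} and~\ref{Proposition7}.

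First I would write $g = \delta f$ for some $f \in L^2_a(X^{\ell})$, which exists by hypothesis since $g \in \Img\delta$. The issue is that this $f$ need not be continuous. However, by the Hodge decomposition of Theorem~\ref{Theorem1} applied in degree $\ell-1$ (in the alternating setting), we may replace $f$ by $P_1(f) \in \Img\delta_{\ell-2} \oplus$ \ldots; more to the point, we may assume $f \in \Ker\delta_{\ell-2}^{\ast}$'s complement, i.e.\ we may take $f \perp \Ker\delta$, so that $f \in \Img\delta^{\ast}$. Then $\delta^{\ast}\delta$ is an isomorphism from $\Img\delta^{\ast}$ to itself by the Corollary, and $\Delta_{\ell-1} f = \delta^{\ast}\delta f + \delta\delta^{\ast} f = \delta^{\ast} g$ because $\delta^{\ast} f = 0$. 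Now $\delta^{\ast} g = \partial g$ is continuous and bounded (as $g \in C^{\ell+1}$ and $\partial$ maps $C^{\ell+1}\to C^{\ell}$), so $\Delta_{\ell-1} f$ is continuous and bounded. By Proposition~\ref{Proposition7} (the regularity result, applied in degree $\ell-1$), this forces $f \in C^{\ell}$. Setting $h = f$ gives $g = \delta h$ with $h \in C^{\ell}$, as desired.

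Concretely, the key steps in order are: (1) use $g \in \Img\delta$ to get an $L^2_a$ primitive $f$; (2) use the Hodge decomposition of Theorem~\ref{Theorem1} to arrange $f \in \overline{\Img\delta^{\ast}}$, equivalently $f \perp \Ker\delta$, so that $\delta^{\ast} f = 0$ and hence $\Delta f = \delta^{\ast}\delta f = \delta^{\ast} g = \partial g$; (3) observe the right-hand side is in $C^{\ell}$ since $\partial$ preserves the continuous bounded alternating forms; (4) invoke Proposition~\ref{Proposition7} to conclude $f \in C^{\ell}$; (5) conclude $g = \delta f$ is the required continuous coboundary.

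The main obstacle is step~(2): one must be careful that the regularity Proposition~\ref{Proposition7} is being applied in the correct degree and that replacing $f$ by its component orthogonal to $\Ker\delta$ does not change $\delta f$. The first point is purely bookkeeping once one notes that Proposition~\ref{Proposition7} was stated for all $\ell$. The second point is immediate: if $f = f_0 + f_1$ with $f_0 \in \Ker\delta$ and $f_1 \perp \Ker\delta$, then $\delta f_1 = \delta f = g$, so we lose nothing by passing to $f_1$. A subtlety worth a remark is that $f_1$ lies in $\overline{\Img\delta^{\ast}}$ rather than $\Img\delta^{\ast}$ a priori, but since $\delta^{\ast} f_1 = 0$ we still get $\Delta f_1 = \delta^{\ast}\delta f_1 = \partial g$ directly from the definition of $\Delta$, so the argument goes through without needing closedness of $\Img\delta^{\ast}$ at this point (although that closedness is in any case available from Theorem~\ref{Theorem1}).
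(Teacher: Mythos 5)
Your proof is correct and takes essentially the same route as the paper: there, the primitive $h$ is chosen as the unique element of $\Img\partial=(\Ker\delta)^{\perp}$ with $g=\delta h$, so that $\partial h=0$, $\Delta h=\partial\delta h=\partial g$ is continuous, and Proposition~\ref{Proposition7} finishes the argument. Your closing remark that one only needs $\delta^{\ast}f_1=0$ (which holds on $\overline{\Img\delta^{\ast}}$) rather than closedness of $\Img\delta^{\ast}$ is a minor but valid refinement of the same idea.
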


\begin{proof}
  From the corollary of the Hodge Lemma, let $h$ be the unique element in
  $\Img \partial$ with $g=\delta h$. Now $\partial g$ is continuous and
  bounded, and
$$
\partial g=\partial\delta h=\partial\delta h+\delta\partial h=\Delta h
$$
since $\partial h=0$. But now $h$ is continuous and bounded from Proposition
\ref{Proposition7}.
\end{proof}

  \begin{proposition}
    \label{Proposition9} If $g\in C^{\ell+1}\cap L^2_a(X^{\ell+1})$, the
    $g=\partial h$ for some $h\in C^{\ell+2}$.
\end{proposition}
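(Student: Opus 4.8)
The statement parallels Proposition~\ref{Proposition8}, but for the operator $\partial$ in place of $\delta$, and without needing a constraint analogous to $g\in\Img\delta$ because in the setting $K\equiv 1$ the operator $\partial_{\ell}$ is surjective onto its target in the relevant degrees (by Lemma~\ref{Lemma2}, $\Img\partial_{\ell}^0=\Ker\partial_{\ell-1}^0$ for $\ell>1$, and for $\ell=1$ the image is $\{1\}^\perp$). So the plan is: first reduce the problem to finding a continuous bounded preimage, then produce a candidate preimage via the Hodge decomposition, and finally invoke the regularity Proposition~\ref{Proposition7} to upgrade it from $L^2_a$ to $C^{\ell+2}$.

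Concretely, I would argue as follows. Given $g\in C^{\ell+1}\cap L^2_a(X^{\ell+1})$, first note that $g\in\Ker\partial_{\ell-1}$: indeed $\partial_{\ell-1}g$ is continuous and bounded, and since $\Img\partial_\ell=\Ker\partial_{\ell-1}$ at the $L^2$ level (Lemma~\ref{Lemma2}/\ref{Lemma3}, in the alternating setting), the image of $\partial$ is closed, so by the Hodge Lemma applied to the $\partial$-complex we may write $g=\partial h$ for a unique $h$ lying in the orthogonal complement of $\Ker\partial_\ell$, i.e.\ $h\in\Img\partial_{\ell+1}^{\ast}=\Img\delta_{\ell}$ (using that $\partial=\delta^*$). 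Wait—more carefully, for the $\partial$-complex the analogue of ``$h\in\Img\partial$'' in Proposition~\ref{Proposition8} is ``$h\in(\Ker\partial)^\perp$'', and $(\Ker\partial_\ell)^\perp=\overline{\Img\partial_\ell^*}=\overline{\Img\delta_\ell}$; so $h$ is both closed for... no, $h$ satisfies $\delta^* h = \partial h = g$ and $h\in\Img\delta_{\ell}$ hence $\partial^* h$... The cleaner route: pick $h$ to be the unique preimage with $h\perp\Ker\partial_{\ell}$, equivalently $\delta_{\ell}^* h\ne$ ... Let me instead mirror Proposition~\ref{Proposition8} exactly. There one chose $h\in\Img\partial$ (so $\partial h=0$) and computed $\partial g=\Delta h$. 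Dually here I choose $h$ with $g=\partial h$ and $h$ in the complement of $\Ker\partial$, which is $\overline{\Img\delta}$; then $h\in\Img\delta_{\ell}$ (closedness again), so $\delta_{\ell-1}^* h=\partial h$... no. The correct dual statement: take $h\in\Img\delta$, say $h=\delta k$, and then $\delta h=0$, so $\Delta h=\partial\delta h+\delta\partial h=\delta\partial h$; and $\partial h=g$ gives $\Delta h=\delta g$, which is continuous and bounded since $g$ is. By Proposition~\ref{Proposition7}, $h\in C^{\ell+2}$, completing the proof.

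So the precise key steps, in order: (i) observe $g$ lies in $\Img\partial_\ell$ at the $L^2_a$ level — for $\ell\ge 1$ this uses $\Ker\partial_{\ell-1}=\Img\partial_\ell$ together with the check that $\partial_{\ell-1}g=0$ (automatic when $\ell>1$ from $\Img\partial_{\ell-1}\subset\Ker\partial_{\ell-2}$... actually one needs $g\in L^2_a$ and the structure; in the borderline degree $\ell+1=1$ one checks $g\perp\mathbf 1$, but actually for $g\in L^2_a(X^2)$ antisymmetry forces $\int g(x,y)\,d\mu(x)\,d\mu(y)=0$, giving $g\in\{1\}^\perp$); (ii) use the corollary of the Hodge Lemma to pick the unique $h\in\Img\delta_{\ell}$ with $\partial h=g$; (iii) compute $\Delta h=\delta\partial h+\partial\delta h=\delta g+0=\delta g$, which is continuous and bounded because $g\in C^{\ell+1}$ (continuity/boundedness of $\delta g$ is immediate from formula~\eqref{2.3}); (iv) apply the regularity result Proposition~\ref{Proposition7} to conclude $h\in C^{\ell+2}$; (v) then $g=\partial h$ with $h\in C^{\ell+2}$, as desired.

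\textbf{Main obstacle.} The only genuinely delicate point is step (ii): making sure the preimage $h$ furnished by the Hodge Lemma genuinely satisfies $\delta_{\ell+1}h=0$ (so that the $\delta\partial h$ term is what survives in $\Delta h$, not $\partial\delta h$). This is exactly why one takes $h$ in $\Img\delta_\ell$ rather than an arbitrary preimage: elements of $\Img\delta_\ell$ are $\delta$-closed, so $\delta h=0$ and $\Delta h=\delta\partial h=\delta g$. One must also confirm that such an $h\in\Img\delta_\ell$ with $\partial h=g$ exists and is unique, which is precisely the content of the corollary to the Hodge Lemma (the map $\delta^*_\ell=\partial_\ell\colon\Img\delta_\ell\to\Img\partial_\ell$ is an isomorphism, once the image of $\delta$ is known to be closed — which holds here since $K\equiv 1$). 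Everything else — boundedness and continuity of $\delta g$, the commutation identities, and the regularity input — is already in hand from the preceding propositions. As in Theorem~\ref{Theorem2}, the general case of $K$ satisfying the hypotheses of Theorem~\ref{Theorem1} then follows by conjugating with the multiplication isomorphisms $M_\ell$ of Lemma~\ref{Lemma3}, which preserve continuity and boundedness since $K$ is continuous and bounded above and below.
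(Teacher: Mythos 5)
Your steps (ii)--(v) are exactly the paper's argument: the paper's entire proof of Proposition~\ref{Proposition9} is the sentence ``the proof is identical to the one for Proposition~\ref{Proposition8}'', i.e.\ pick the unique $h\in\Img\delta$ with $\partial h=g$ (corollary of the Hodge Lemma), note $\delta h=0$ so that $\Delta h=\delta\partial h+\partial\delta h=\delta g$ is continuous and bounded, and invoke Proposition~\ref{Proposition7}. So the heart of your proposal matches the intended proof, including the extension to general $K$ via the multiplication operators $M_\ell$.

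The one place you go astray is step (i), where you try to deduce $g\in\Img\partial$ from the stated hypothesis $g\in C^{\ell+1}\cap L^2_a(X^{\ell+1})$. That cannot work: for $\ell\ge 1$ an arbitrary continuous alternating $\ell$-form need not satisfy $\partial_\ell g=0$ (your parenthetical claim that this is ``automatic when $\ell>1$ from $\Img\partial_{\ell-1}\subset\Ker\partial_{\ell-2}$'' only gives $\partial\partial g=0$, not $\partial g=0$), and since $\partial^2=0$ any $g$ with $\partial g\neq 0$ is certainly not of the form $\partial h$. The resolution is that the hypothesis in the printed statement is a typo: by symmetry with Proposition~\ref{Proposition8}, and from how Propositions~\ref{Proposition6}--\ref{Proposition9} are used to prove Theorem~\ref{Theorem2} (they are applied to $P_2(f)$, which lies in $\Img\partial$ by construction), the intended hypothesis is $g\in C^{\ell+1}\cap\Img\partial$. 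Once that is granted, your step (i) is unnecessary and the rest of your argument is complete and correct.
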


The proof is identical to the one for Proposition \ref{Proposition8}.

Theorem \ref{Theorem2}, in the case $K\equiv 1$ now follows from Propositions
\ref{Proposition6} through
\ref{Proposition9}. The proof easily extends to general $K$ which is bounded
below by a positive constant.

\section[Hodge Theory at Scale alpha]{\boldmath Hodge Theory at Scale $\alpha$}\label{Section4}

As seen in Sections \ref{Section2} and \ref{Section3}, the chain and cochain
complexes constructed on
the whole space yield trivial cohomology groups. In order to have a theory
that gives us topological information about $X$, we define our complexes on a
neighborhood of the diagonal, and restrict the boundary and coboundary
operator to these complexes. The corresponding cohomology can be considered a
cohomology of $X$ at a scale, with the scale being the size of the
neighborhood. We will assume throughout this section that $(X,d)$ is a compact
metric space. For $x,y\in X^{\ell}$, $\ell>1$, this induces a metric
compatible with the product topology
$$
d_{\ell}(x,y)=\max\{d(x_0,y_0),\dots d(x_{\ell-1},y_{\ell-1})\}.
$$

The diagonal $D_{\ell}$ of $X^{\ell}$ is just $\{x\in
X^{\ell}:x_i=x_j,\ i,j=0,\dots,\ell-1\}$ For $\alpha>0$ we let
$U_\alpha^\ell$ be the $\alpha$-neighborhood of the diagonal in $X^\ell$, namely
\begin{align*}
U^{\ell}_{\alpha}& =\{x\in X^{\ell}:d_{\ell}(x,D_{\ell})\leq\alpha\}\\
&=\{x\in X^{\ell}: \exists t\in X\ \text{such that}\ d(x_i,t)\leq\alpha,\
i=0,\dots,\ell-1\}.
\end{align*}

Observe that $U^{\ell}_{\alpha}$ is closed and that for $\alpha\geq$ diameter
$X$, $U^{\ell}_{\alpha}=X^{\ell}$.

One could, alternatively, have defined neighbourhoods $V_\alpha^\ell$
as those $x\in X^\ell$ such that $d(x_i,x_j)\le\alpha$ whenever $0\le
i,j<\ell$; this definition appears in the Vietoris-Rips complex, see
Remark~\ref{remVietoris}. Both definitions are very close, in the
sense that $V_\alpha^\ell\subseteq U_\alpha^\ell\subseteq
V_{2\alpha}^\ell$.

The measure $\mu_{\ell}$ induces a Borel measure on $U^{\ell}_{\alpha}$ which
we will simply denote by $\mu_{\ell}$ (not a probability measure). For
simplicity, we will take $K\equiv 1$ throughout this section, and consider
only alternating functions in our complexes. We first discuss the $L^2$-theory, and thus our basic spaces will be $L^2_{a}(U^{\ell}_{\alpha})$, the
space of alternating functions on $U^{\ell}_{\alpha}$ that are in $L^2$ with
respect to $\mu_{\ell}$, $\ell>0$. Note that if $(x_0,\dots,x_{\ell})\in
U^{\ell+1}_{\alpha}$, then $(x_0,\dots,\hat x_i,\dots,x_{\ell})\in
U^{\ell}_{\alpha}$ for $i=0,\dots,\ell$. It follows that if $f\in
L^2_a(U^{\ell}_{\alpha})$, then $\delta f\in L^2_a(U^{\ell+1}_{\alpha})$. We
therefore have the well defined cochain complex
$$
\begin{CD}
  0 @>>> L^2_a(U^1_{\alpha}) @>{\delta} >> L^2_a(U^2_{\alpha})
  \cdots @>{\delta }>> L^2_a(U^{\ell}_{\alpha}) @>{\delta} >>
  L^2_a(U^{\ell+1}_{\alpha})\cdots
\end{CD}
$$
Since $\partial=\delta^{\ast}$ depends on the integral, the expression for it
will be different from \eqref{2.4}. We define a ``slice'' by
$$
S_{x_0\cdots x_{\ell-1}}=\{t\in X\colon (x_0,\dots,x_{\ell-1},t)\in
U^{\ell+1}_{\alpha}\}.
$$
We note that, for $S_{x_0\cdots x_{\ell-1}}$ to be nonempty,
$(x_0,\dots,x_{\ell-1})$ must be in $U^{\ell}_{\alpha}$. Furthermore
$$
U^{\ell+1}_{\alpha}=\{(x_0,\dots,x_{\ell}):(x_0,\dots,x_{\ell-1})\in
U^{\ell}_{\alpha},\ \text{and} \ x_{\ell}\in S_{x_0\cdots x_{\ell-1}}\}.
$$
It follows from the proof of Proposition \ref{Proposition1} of Section
\ref{Section2} and the fact that
$K\equiv 1$, that $\delta\colon L^2_a(U^{\ell}_{\alpha})\to
L^2_a(U^{\ell+1}_{\alpha})$ is bounded and that $\|\delta\|\leq \ell+1$, and
therefore $\delta^{\ast}$ is bounded.  The adjoint of the operator
$\delta\colon L^2_a(U^{\ell}_{\alpha})\to L^2_a(U^{\ell+1}_{\alpha})$ will be
denoted, as before, by either $\partial$ or $\delta^{\ast}$ (without the
subscript $\ell$).

\begin{proposition}
  \label{Proposition10} For $f\in L^2_a(U^{\ell+1}_{\alpha})$ we have
$$
\partial f(x_0,\dots,x_{\ell-1})=(\ell+1)\int_{S_{x_0\cdots x_{\ell-1}}}
f(t,x_0,\dots,x_{\ell-1})\, d\mu(t).
$$
\end{proposition}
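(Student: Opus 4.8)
The plan is to compute the adjoint directly from the definition of $\delta$ on $U^{\ell}_\alpha$ by unwinding the inner product $\langle \delta f, g\rangle$ over $U^{\ell+1}_\alpha$ and using Fubini's theorem with the slice decomposition $U^{\ell+1}_\alpha = \{(x_0,\dots,x_\ell): (x_0,\dots,x_{\ell-1})\in U^{\ell}_\alpha,\ x_\ell\in S_{x_0\cdots x_{\ell-1}}\}$ established just above. This is essentially the alternating-function analogue of Proposition \ref{Proposition4}, but with two modifications: the integration variable only ranges over the slice rather than all of $X$, and the alternating property is used (as in Proposition \ref{Proposition5}) to collapse the sum of $\ell+1$ terms into a single term with a factor $\ell+1$.

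Concretely, I would start from $\langle \delta h, f\rangle_{L^2_a(U^{\ell+1}_\alpha)}$ for $h\in L^2_a(U^{\ell}_\alpha)$ and $f\in L^2_a(U^{\ell+1}_\alpha)$, expand $\delta h(x_0,\dots,x_\ell)=\sum_{i=0}^{\ell}(-1)^i h(x_0,\dots,\hat x_i,\dots,x_\ell)$ (recall $K\equiv 1$), and interchange sum and integral. For each fixed $i$, I would relabel so that the variable $x_i$ becomes the "last" integration variable and the remaining $\ell$ variables $(x_0,\dots,\hat x_i,\dots,x_\ell)$ become the argument of $h$; crucially, the observation preceding the proposition guarantees that deleting any coordinate sends $U^{\ell+1}_\alpha$ into $U^{\ell}_\alpha$, and conversely for fixed $(x_0,\dots,\hat x_i,\dots,x_\ell)\in U^{\ell}_\alpha$ the admissible $x_i$ range exactly over the slice $S$. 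After relabelling, each of the $\ell+1$ terms contributes the same integral $\int_{U^{\ell}_\alpha} h(y)\int_{S_y} f(t,y_0,\dots,y_{\ell-1})\,d\mu(t)\,d\mu_\ell(y)$ up to the sign bookkeeping, which the alternating property of $f$ makes uniform; summing gives the factor $\ell+1$, and reading off the integrand against $h$ identifies $\partial f$ as claimed.

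The one point requiring a little care — and the main potential obstacle — is the measurability and integrability needed to apply Fubini on the region $U^{\ell+1}_\alpha$ with its slice fibration: one must check that $(x_0,\dots,x_{\ell-1})\mapsto \int_{S_{x_0\cdots x_{\ell-1}}} f(t,x_0,\dots,x_{\ell-1})\,d\mu(t)$ is a well-defined element of $L^2_a(U^{\ell}_\alpha)$, and that the sign cancellations making all $\ell+1$ terms agree are correctly tracked under the relabelling. Since $U^{\ell+1}_\alpha$ is closed (hence Borel) and $\mu_{\ell+1}$ is the restriction of the product measure, the slice decomposition is measurable and Fubini applies; boundedness of $\delta$ (already noted, $\|\delta\|\le \ell+1$) together with $\partial=\delta^*$ gives the required $L^2$ bound automatically. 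I would therefore present this as a short computation paralleling Propositions \ref{Proposition4} and \ref{Proposition5}, noting only that the domain of the inner integral is now the slice $S_{x_0\cdots x_{\ell-1}}$ rather than all of $X$.
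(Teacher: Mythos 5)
Your proposal is correct and follows essentially the same route as the paper, which simply states that the proof is the same as that of Proposition \ref{Proposition4} using $K\equiv 1$, the alternating property of $f$, and the slice decomposition of $U^{\ell+1}_{\alpha}$ noted just before the statement. Your additional attention to the measurability of the slice fibration and the sign bookkeeping is sound but not a departure from the paper's argument.
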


\begin{proof}
  The proof is essentially the same as the proof of Proposition
  \ref{Proposition4}, using the
  fact that $K\equiv 1$, $f$ is alternating, and the above remark.
\end{proof}

It is worth noting that the domain of integration depends on $x\in
U^{\ell}_{\alpha}$, and this makes the subsequent analysis more difficult than
in Section \ref{Section3}. We thus have the corresponding chain complex
 $$
 \begin{CD}
   \cdots @>{\partial} >> L^2_a(U^{\ell+1}_{\alpha}) @> {\partial} >>
   L^2_a(U^{\ell}_{\alpha}) @> {\partial} >> \cdots L^2_a(U^1_{\alpha})
   @>{ \partial} >> 0.
 \end{CD}
 $$
 Of course, $U^1_{\alpha}=X$. The corresponding Hodge Laplacian is the
 operator $\Delta\colon L^2_a(U^{\ell}_{\alpha})\to L^2_a(U^{\ell}_{\alpha})$,
 $\Delta=\partial\delta+\delta\partial$, where all of these operators depend
 on $\ell$ and $\alpha$. When we want to emphasize this dependence, we will
 list $\ell$ and (or) $\alpha$ as subscripts. We will use the following
 notation for the cohomology and harmonic functions of the above complexes:
 $$
 H^{\ell}_{L^2,\alpha}(X)=\frac{\Ker\delta_{\ell,\alpha}}{\Img\
   \delta_{\ell-1,\alpha}}\ \ \ \text{and}\ \ \
 \Harm^{\ell}_{\alpha}(X)=\Ker\Delta_{\ell,\alpha}.
 $$
 \begin{remark}
   If $\alpha\geq\diam(X)$, then $U^{\ell}_{\alpha}=X^{\ell}$, so the
   situation is as in Theorem \ref{Theorem1} of Section \ref{Section2}, so $H^{\ell}_{L^2,\alpha}(X)=0$
   for $\ell>0$ and $H^0_{L^2,\alpha}(X)=\mathbb{R}$. Also, if $X$ is a finite
   union of connected components $X_1,\dots,X_k$, and
   $\alpha<d(X_i,X_j)$ for all $i\neq j$, then
   $H^{\ell}_{L^2,\alpha}(X)=\bigoplus_{i=1}^k H^{\ell}_{L^2,\alpha}(X_i)$.
 \end{remark}

 \begin{definition}
   We say that Hodge theory for $X$ at scale $\alpha$ holds if we have the
   orthogonal direct sum decomposition into closed subspaces
 $$
 L^2_a(U^{\ell}_{\alpha})=\Img\delta_{\ell-1}\oplus \Img\
 \delta^{\ast}_{\ell}\oplus\Harm^{\ell}_{\alpha}(X) \ \ \text{for all}\ \ell
 $$
 and furthermore, $H^{\ell}_{\alpha,L^2}(X)$ is isomorphic to
 $\Harm^{\ell}_{\alpha}(X)$, with each equivalence class in the former
 having a unique representative in the latter.
\end{definition}

\begin{remark}\label{remFunctorial}
  Hodge theory is functorial, in the sense that, for any $s\ge1$, the
  inclusion $U_\alpha^\ell\subseteq U_{s\alpha}^\ell$ induces
  corestriction maps $H_{s\alpha}^\ell\to H_\alpha^\ell$. In seeking a
  robust notion of cohomology, it will make sense to consider the
  images of these maps at a sufficiently large separation $s$, rather
  than at individual cohomology groups $H_\alpha^\ell$.

  !!!! richer kind of functoriality, for maps $f:Y\to X$? Which conditions on $f$?
\end{remark}

\begin{theorem}\label{Theorem3} If $X$ is a compact metric space, $\alpha>0$,
  and the $L^2$-cohomology spaces
$\Ker\delta_{\ell,\alpha}/\Img\delta_{\ell-1,\alpha}$, $\ell\geq 0$
are finite dimensional, then Hodge theory for $X$ at scale $\alpha$ holds.
\end{theorem}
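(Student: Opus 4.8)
The plan is to deduce Theorem~\ref{Theorem3} from the Hodge Lemma (Lemma~\ref{Lemma1}) applied to the cochain complex $\bigl(L^2_a(U^\ell_\alpha),\delta_{\ell,\alpha}\bigr)$. By that lemma, it suffices to verify that $\delta_{\ell,\alpha}$ has closed range for every $\ell$, since closedness of the range is equivalent to the full Hodge decomposition \eqref{eq:spec_Hodge} together with the isomorphism between the cohomology quotient and $\Ker\Delta_\ell$, which is exactly the assertion ``Hodge theory for $X$ at scale $\alpha$ holds.'' So the entire content of the proof reduces to: \emph{finite-dimensionality of all the $L^2$-cohomology groups implies that all coboundary maps have closed range.}

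For this I would invoke Proposition~\ref{PropositionWeinberger} directly. That proposition states that if, in the setting of Lemma~\ref{Lemma1}, the cohomology space $\Ker\delta_\ell/\Img\delta_{\ell-1}$ is finite dimensional, then $\delta_{\ell-1}$ has closed range. We must check its hypotheses are met here: the spaces $L^2_a(U^\ell_\alpha)$ are Hilbert spaces, the maps $\delta_{\ell,\alpha}$ are bounded (this was observed in the text, with $\|\delta\|\le\ell+1$, following the proof of Proposition~\ref{Proposition1}), and $\delta^2=0$ (inherited from Proposition~\ref{Proposition2}, restricted to the diagonal neighborhoods, which is legitimate since $\delta$ maps $L^2_a(U^\ell_\alpha)$ into $L^2_a(U^{\ell+1}_\alpha)$ as noted). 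The hypothesis of Theorem~\ref{Theorem3} gives us precisely that each $\Ker\delta_{\ell,\alpha}/\Img\delta_{\ell-1,\alpha}$ is finite dimensional, so Proposition~\ref{PropositionWeinberger} applies for every $\ell$ and yields that $\delta_{\ell-1,\alpha}$ has closed range for all $\ell$, i.e.\ condition (1) of Lemma~\ref{Lemma1} holds.

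Once closedness of all ranges is established, Lemma~\ref{Lemma1} delivers both conclusions we need: the orthogonal direct sum decomposition $L^2_a(U^\ell_\alpha)=\Img\delta_{\ell-1}\oplus\Img\delta^*_\ell\oplus\Ker\Delta_\ell$ into closed subspaces, and the fact that $H^\ell_{L^2,\alpha}(X)=\Ker\delta_\ell/\Img\delta_{\ell-1}$ is isomorphic to $\Harm^\ell_\alpha(X)=\Ker\Delta_{\ell,\alpha}$ with unique harmonic representatives. This is verbatim the definition of ``Hodge theory for $X$ at scale $\alpha$ holds,'' so the proof is complete.

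\textbf{Main obstacle.} Honestly, there is no serious obstacle: the theorem is a formal corollary of the two earlier results. The only things one must be careful about are bookkeeping points rather than genuine difficulties --- namely, confirming that the abstract hypotheses of Lemma~\ref{Lemma1} and Proposition~\ref{PropositionWeinberger} (Hilbert spaces, bounded maps, $\delta^2=0$, adjoints making sense) genuinely transfer to the restricted complex on the $\alpha$-neighborhoods of the diagonal. Each of these was already remarked upon in the construction preceding the theorem (boundedness of $\delta$ with $\|\delta\|\le\ell+1$, the inclusion $\delta\colon L^2_a(U^\ell_\alpha)\to L^2_a(U^{\ell+1}_\alpha)$, and that $\delta^*=\partial$ is correspondingly bounded), so assembling them is routine. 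The compactness of $X$ is used implicitly to guarantee $U^\ell_\alpha$ has finite measure $\mu_\ell$ (so that the spaces and the boundedness estimate make sense), but otherwise plays no active role in this particular argument.
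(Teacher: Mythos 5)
Your proposal is correct and follows exactly the paper's own route: the paper's proof is the one-line "immediate from the Hodge Lemma, using Proposition \ref{PropositionWeinberger}," and you have simply spelled out the same two-step argument (finite dimensionality $\Rightarrow$ closed range via Proposition \ref{PropositionWeinberger}, then Lemma \ref{Lemma1} gives the decomposition and the isomorphism with $\Ker\Delta_\ell$). The extra bookkeeping you verify is sound and matches the remarks preceding the theorem in Section \ref{Section4}.
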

\begin{proof}
 This is immediate from the Hodge Lemma (Lemma \ref{Lemma1}), using Proposition~\ref{PropositionWeinberger} from Section \ref{Section2}.
\end{proof}

 We record the formulas for $\delta\partial f$ and $\partial\delta f$ for
 $f\in L^2_a(U^{\ell+1}_{\alpha})$

\begin{multline*} \delta(\partial
f)(x_0,\dots,x_{\ell})\\=(\ell+1)\sum_{i=0}^{\ell}(-1)^i\int_{S_{x_0,\dots,\hat
    x_i,\dots,x_{\ell}}}f(t,x_0,\dots,\hat
x_i,\dots,x_{\ell})d\mu(t) \end{multline*}

\begin{multline}\label{4.4}
\partial(\delta f)(x_0,\dots,x_{\ell})=(\ell+2)\mu(S_{x_0,\dots,x_{\ell}})f(x_0,\dots,x_{\ell})\\
+(\ell+2)\sum_{i=0}^{\ell}(-1)^{i+1}\int_{S_{x_0,\dots,x_{\ell}}}f(t,x_0,\dots,\hat
x_i,\dots,x_{\ell})d\mu(t) \end{multline}

Of course, the formula for $\Delta f$ is found by adding these two.

\begin{remark}
  Harmonic forms are solutions of the optimization problem: Minimize the
  ``Dirichlet norm'' $\|\delta f\|^2+\|\partial f\|^2=\langle \Delta
  f,f\rangle=\langle \Delta^{1/2}f,\Delta^{1/2}f\rangle$ over $f\in L^2_a(U^{\ell+1}_{\alpha})$.
\end{remark}

\begin{remark}\label{remVietoris}
  The alternative neighbourhoods $V^{\ell+1}_{\alpha}$, giving rise to
  the Vietoris-Rips complex (see Chazal and Oudot \cite{4}), were
  defined by $(x_0,\dots,x_{\ell})\in U^{\ell+1}_{\alpha}$ if and only
  if $d(x_i,x_j)\leq\alpha$ for all $i,j$. This corresponds to the
  theory developed in Section \ref{Section2} with $K(x,y)$ equal to
  the characteristic function of $V^2_{\alpha}$. A version of Theorem
  \ref{Theorem3} holds in this case.
\end{remark}

\section[L2-Theory of alpha-Harmonic 0-Forms]{\boldmath $L^2$-Theory of $\alpha$-Harmonic $0$-Forms}\label{Section5}

In this section we assume that we are in the setting of Section
\ref{Section4}, with
$\ell=0$. Thus $X$ is a compact metric space with a probability measure and
with a fixed scale $\alpha>0$.

Recall that $f\in L^2(X)$ is $\alpha$-harmonic if
$\Delta_{\alpha}f=0$. Moreover, if $\delta\colon L^2(X)\to L_a^2(U^2_{\alpha})$
denotes the coboundary, then $\Delta_{\alpha}f=0$ if and only if $\delta f=0$;
also $\delta f(x_0,x_1)=f(x_1)-f(x_0)$ for all pairs $(x_0,x_1)\in
U^2_{\alpha}$.

Recall that for any $x\in X$, the slice $S_{x,\alpha}=S_x\subset X^2$ is the
set
$$
S_x=S_{x,\alpha}=\{t\in X: \exists p\in X\ \text{such that} \ x,t\in
B_{\alpha}(p)\} .
$$
Note that $B_{\alpha}(x)\subset S_{x,\alpha}\subset B_{2\alpha}(x)$. It
follows that $x_1\in S_{x_0,\alpha}$ if and only if $x_0\in
S_{x_1,\alpha}$. We conclude
\begin{proposition}
  Let $f\in L^2(X)$. Then $\Delta_{\alpha}f=0$ if and only if $f$ is locally
  constant in the sense that $f$ is constant on $S_{x,\alpha}$ for every $x\in
  X$. Moreover if $\Delta_{\alpha}f=0$, then
  \begin{enumerate}
    \item[(a)] If $X$ is connected, then $f$ is constant.
    \item[(b)] If $\alpha$ is greater than the maximum distance between components of
    $X$, then $f$ is constant.
    \item[(c)] For any $x\in X$, $f(x)=$average of $f$ on $S_{x,\alpha}$ and on
    $B_{\alpha}(x)$.
    \item[(d)] Harmonic functions are continuous.
  \end{enumerate}
\end{proposition}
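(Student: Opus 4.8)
The plan is to prove the four assertions of the proposition by first establishing the characterization of $\alpha$-harmonic functions and then deriving (a)--(d) from it. The starting point is the observation already recorded in the excerpt: for $f\in L^2(X)$ we have $\Delta_\alpha f=0$ if and only if $\delta f=0$, and $\delta f(x_0,x_1)=f(x_1)-f(x_0)$ for all $(x_0,x_1)\in U^2_\alpha$. The only subtlety is that these statements hold in the $L^2$ sense, i.e.\ up to null sets, so I must be careful about what ``constant on $S_{x,\alpha}$'' means before regularity is established.

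First I would prove the ``locally constant'' characterization. Since $\delta f = 0$ as an element of $L^2_a(U^2_\alpha)$, we have $f(x_0)=f(x_1)$ for $\mu_2$-almost every $(x_0,x_1)\in U^2_\alpha$. Using that $\mu$ gives positive mass to every nonempty open set and the inclusions $B_\alpha(x)\subseteq S_{x,\alpha}\subseteq B_{2\alpha}(x)$, together with the symmetry $x_1\in S_{x_0,\alpha}\iff x_0\in S_{x_1,\alpha}$, a Fubini argument shows that for $\mu$-almost every $x$, $f$ agrees $\mu$-a.e.\ on $S_{x,\alpha}$ with the value $f(x)$; one then upgrades this to the genuine statement that $f$ (more precisely its canonical representative) is constant on each $S_{x,\alpha}$. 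This is the step I expect to be the main obstacle, because it is the place where one passes from an almost-everywhere identity to a pointwise/topological one, and it is really the content of part (d): once $f$ is honestly locally constant on the sets $S_{x,\alpha}$, which are neighborhoods of $x$ (as they contain $B_\alpha(x)$), continuity of $f$ is immediate, and conversely a locally constant function in this sense clearly has $\delta f=0$.

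With the characterization in hand, (a) follows by a standard connectedness argument: the relation $x\sim y$ iff $f(x)=f(y)$ has open equivalence classes (since $f$ is constant on each $B_\alpha(x)$), so each level set is open, hence each is also closed, and connectedness of $X$ forces a single level set. For (b), the same argument run on each connected component shows $f$ is constant on each component; if $\alpha$ exceeds the maximum distance between components, then for any two components there exist points $p$ in one and $q$ in the other with $d(p,q)\le\alpha$, so $q\in B_\alpha(p)\subseteq S_{p,\alpha}$ and $f(p)=f(q)$, chaining across all components to give a single constant. For (c), fix $x$; since $f$ is constant on $S_{x,\alpha}$ with value $f(x)$, and on $B_\alpha(x)\subseteq S_{x,\alpha}$ with the same value, the average of $f$ over either set (with respect to the normalized restriction of $\mu$, which has positive total mass since these sets have nonempty interior) is just $f(x)$. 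Finally (d) was obtained along the way: $f$ is locally constant in the stated sense and the sets $S_{x,\alpha}$ contain the open balls $B_\alpha(x)$, so $f$ is continuous.
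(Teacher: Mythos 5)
Your proposal is correct and follows essentially the same route as the paper, which derives everything from the observation that $\Delta_\alpha f=0$ iff $\delta f=0$ iff $f(x_1)=f(x_0)$ on $U^2_\alpha$, and then notes that local constancy on the slices $S_{x,\alpha}\supseteq B_\alpha(x)$ gives continuity and (a)--(c). You are in fact more careful than the paper (which offers only a one-line remark in place of a proof) about the passage from the $L^2$ identity $\delta f=0$ to a representative of $f$ that is genuinely constant on every slice; that upgrade does go through as you sketch, using that nonempty open sets have positive measure so that overlapping balls force agreement of the essential values.
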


We note that continuity of $f$ follows from the fact that $f$ is constant on
each slice $S_{x,\alpha}$, and thus locally constant.

\begin{remark}
  We will show that (d) is also true for harmonic 1-forms with an additional
  assumption on $\mu$, (Section \ref{Section8}) but are unable to prove it for
  harmonic 
  2-forms.
\end{remark}
Consider next an extension of (d) to the Poisson regularity problem. If
$\Delta_{\alpha}f=g$ is continuous, is $f$ continuous? In general the answer
is no, and we will give an example.

Since $\partial_0$ on $L^2(X)$ is zero, the $L^2$-$\alpha$-Hodge theory
(Section \ref{Section9}) takes the form
$$
L^2(X)=\Img\partial \oplus\Harm_{\alpha},
$$
where $\partial\colon L^2(U^2_{\alpha})\to L^2(X)$ and $\Delta f=\partial\delta
f$. Thus for $f\in L^2(X)$, by~\eqref{4.4}
\begin{equation}\label{star}
\Delta_{\alpha}f(x)=2\mu(S_{x,\alpha})f(x)-2\int_{S_{x,\alpha}}f(t)\, d\mu(t)
\end{equation}

The following example shows that an additional assumption is needed for the
Poisson regularity problem to have an affirmative solution. Let $X$ be the
closed interval $[-1,1]$ with the usual metric $d$ and let $\mu$ be the
Lebesgue measure on $X$ with an atom at $0$, $\mu(\{0\})=1$. Fix any
$\alpha<1/4$. We will define a piecewise linear function on $X$ with
discontinuities at $-2\alpha$ and $2\alpha$ as follows. Let $a$ and $b$ be any
real numbers $a\neq b$, and define
\begin{equation*}
f(x)=
\begin{cases}
  \frac{a-b}{8\alpha}+a, & -1\leq x<-2\alpha\\
     \frac{b-a}{4\alpha}(x-2\alpha)+b, &  -2\alpha\leq x\leq 2\alpha\\
     \frac{a-b}{8\alpha}+b,& 2\alpha<x\leq 1 .
   \end{cases}
\end{equation*}

Using \eqref{star} above one readily checks that $\Delta_{\alpha}f$ is
continuous by
computing left hand and right hand limits at $\pm 2\alpha$. (The constant
values of $f$ outside $[-2\alpha,2\alpha]$ are chosen precisely so that the
discontinuities of the two terms on the right side of~\eqref{star} cancel out.)

With an additional ``regularity'' hypothesis imposed on $\mu$, the Poisson
regularity property holds. In the rest of this section assume that
$\mu(S_x\cap A)$ is a continuous function of $x\in X$ for each measurable set
$A$. One can show that if $\mu$ is Borel regular, then this will hold provided
$\mu(S_x\cap A)$ is continuous for all closed sets $A$ (or all open sets $A$).
\begin{proposition}\label{PoissonRegularity}
  Assume that
$\mu(S_x\cap A)$ is a continuous function of $x\in X$ for each measurable set
$A$. If $\Delta_{\alpha}f=g$ is continuous for $f\in L^2(X)$ then $f$ is continuous.
\end{proposition}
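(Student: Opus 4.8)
The plan is to solve for $f$ from the relation $\Delta_\alpha f = g$ exactly as in the regularity arguments of Section~\ref{Section3}, treating the continuity hypothesis on $x\mapsto\mu(S_x\cap A)$ as the substitute for the ``integral operator preserves continuity'' fact that was automatic when the slices did not vary. Starting from~\eqref{star}, write
$$
2\mu(S_{x,\alpha})\,f(x) = g(x) + 2\int_{S_{x,\alpha}} f(t)\,d\mu(t),
$$
so that, provided $\mu(S_{x,\alpha})$ is bounded away from $0$ (which holds since $B_\alpha(x)\subseteq S_{x,\alpha}$ and $\mu$ is positive on nonempty open sets, so $\mu(S_{x,\alpha})\ge\mu(B_\alpha(x))>0$; on a compact space one gets a uniform lower bound by a covering argument), it suffices to prove that $x\mapsto\mu(S_{x,\alpha})$ and $x\mapsto\int_{S_{x,\alpha}} f(t)\,d\mu(t)$ are both continuous. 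The first of these is the hypothesis applied to $A=X$.

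The main work is the second term: $x\mapsto\int_{S_{x,\alpha}} f\,d\mu = \int_X \chi_{S_{x,\alpha}}(t)\,f(t)\,d\mu(t)$. First I would handle $f=\chi_A$ for measurable $A$, where the integral is exactly $\mu(S_x\cap A)$, continuous by hypothesis; by linearity this extends to simple functions. Then for general $f\in L^2(X)\subseteq L^1(X)$ (as $\mu$ is finite), approximate $f$ in $L^1$ by simple functions $f_n$; since $\bigl|\int_{S_{x,\alpha}} f\,d\mu - \int_{S_{x,\alpha}} f_n\,d\mu\bigr| \le \|f-f_n\|_{L^1(X)}$ uniformly in $x$, the continuous functions $x\mapsto\int_{S_{x,\alpha}} f_n\,d\mu$ converge uniformly to $x\mapsto\int_{S_{x,\alpha}} f\,d\mu$, which is therefore continuous. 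Dividing by the continuous, nowhere-vanishing function $2\mu(S_{x,\alpha})$ then shows $f$ is continuous.

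The step I expect to be the real obstacle is making sure the continuity hypothesis is genuinely being used in the right form and that the uniform-in-$x$ approximation passes through cleanly: one must be careful that the error bound $\|f-f_n\|_{L^1}$ does not depend on $x$ (it does not, since $\mu(S_{x,\alpha})\le\mu(X)=1$), and that no measurability pathology prevents $x\mapsto\mu(S_x\cap A)$ from even being defined --- this is exactly why the hypothesis is stated for \emph{every} measurable $A$. A secondary point worth a line is the uniform lower bound on $\mu(S_{x,\alpha})$: cover $X$ by finitely many balls $B_{\alpha/2}(p_i)$; for any $x$, $x\in B_{\alpha/2}(p_i)$ for some $i$, hence $B_{\alpha/2}(p_i)\subseteq S_{x,\alpha}$ (any two points of $B_{\alpha/2}(p_i)$ lie in the common ball $B_\alpha(p_i)$, wait---more carefully, $B_\alpha(x)\subseteq S_{x,\alpha}$ already suffices together with lower semicontinuity of $x\mapsto\mu(B_\alpha(x))$ and compactness), giving $\inf_x \mu(S_{x,\alpha})>0$ so that division is legitimate and $f$ is even bounded.
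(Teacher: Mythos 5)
Your proposal is correct and follows essentially the same route as the paper: solve \eqref{star} for $f$, use the hypothesis with $A=X$ (plus $\mu(S_x)\supseteq B_\alpha(x)$ having positive measure) to handle the term $g(x)/(2\mu(S_x))$, and prove continuity of $x\mapsto\int_{S_x}f\,d\mu$ by passing from characteristic functions to simple functions to general $f\in L^1$ via the uniform bound $|\int_{S_x}(f-f_n)\,d\mu|\le\|f-f_n\|_{L^1(X)}$. The paper obtains the $L^1$-convergence of the simple approximants by dominated convergence rather than quoting it directly, and does not bother with your (unneeded for continuity, though harmless) uniform lower bound on $\mu(S_x)$; these are cosmetic differences only.
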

\begin{proof}
  From~\eqref{star} we have
$$
f(x)=\frac{g(x)}{2\mu(S_x)}+\frac{1}{\mu(S_x)}\int_{S_x}f(t)\,d\mu(t)
$$
The first term on the right is clearly continuous by our hypotheses on $\mu$
and the fact that $g$ is continuous. It suffices to show that the function
$h(x)=\int_{S_x}f(t)\, d\mu(t)$ is continuous. If $f=\chi_{ _A}$ is the
characteristic function of any measurable set $A$, then $h(x)=\mu(S_x\cap A)$
is continuous, and therefore $h$ is continuous for $f$ any simple function
(linear combination of characteristic functions of measurable sets). From
general measure theory, if $f\in L^2(X)$, we can find a sequence of simple
functions $f_n$ such that $f_n(t)\to f(t)$ a.e, and $|f_n(t)|\leq |f(t)|$ for
all $t\in X$. Thus $h_n(x)=\int_{S_x}f_n(t)\, d\mu(t)$ is continuous and
$$
|h_n(x)-h(x)|\leq \int_{S_x}|f_n(t)-f(t)|\, d\mu(t)\leq
\int_{X}|f_n(t)-f(t)|\, d\mu(t)
$$
Since $|f_n- f|\to 0$ a.e, and $|f_n-f|\leq 2|f|$ with $f$ being in $L^1(X)$,
it follows from the dominated convergence theorem that $\int_X|f_n-f|\,d\mu\to
0$. Thus $h_n$ converges uniformly to $h$ and so continuity of $h$ follows
from continuity of $h_n$.
\end{proof}

We don't have a similar result for 1-forms.

Partly to relate our framework of $\alpha$-harmonic theory to some previous
work, we combine the setting of Section \ref{Section2} with Section
\ref{Section4}. Thus we now put back 
the function $K$. Assume $K> 0$ is a symmetric and continuous function
$K\colon X\times X\to \mathbb{R}$, and $\delta$ and $\partial$ are defined as in
Section \ref{Section2}, but use a similar extension to general $\alpha>0$, of
Section \ref{Section4},
all in the $L^2$-theory.

Let $D\colon L^2(X)\to L^2(X)$ be the operator defined as multiplication by
the function
$$
D(x)=\int_X G(x,y)\, d\mu(y)\ \ \ \ \ \text{where}\
G(x,y)=K(x,y)\chi_{U^2_{\alpha}}
$$
using the characteristic function $\chi_{U^2_{\alpha}}$ of $U^2_{\alpha}$. So
$\chi_{U^2_{\alpha}}(x_0,x_1)=1$ if $(x_0,x_1)\in U^2_{\alpha}$ and 0
otherwise. Furthermore, let $L_G\colon L^2(X)\to L^2(X)$ be the integral operator
defined by
$$
L_G f(x)=\int_X G(x,y)f(y)\,d\mu(y).
$$
Note that $L_G(1)=D$ where $1$ is the constant function. When $X$ is compact
$L_G$ is a Hilbert-Schmidt operator (this was first noted to us by Ding-Xuan
Zhou). Thus $L_G$ is trace class and self adjoint. It is not difficult to see
now that ~\eqref{star} takes the form

\begin{equation}\label{starstar}
\frac{1}{2}\Delta_{\alpha} f=Df-L_G f.
\end{equation}

(For the special case $\alpha=\infty$, i.e. $\alpha$ is irrelevant as in
Section \ref{Section2}, this is the situation as in Smale and Zhou \cite{19} for
the case $K$
is a reproducing kernel.) As in the previous proposition:
\begin{proposition}
  The Poisson Regularity Property holds for the operator
  of \eqref{starstar}.
\end{proposition}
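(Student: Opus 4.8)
The plan is to mimic the proof of Proposition~\ref{PoissonRegularity} almost verbatim, the only new feature being the presence of the kernel $K$ inside the integral operator $L_G$ and the multiplication operator $D$. Starting from \eqref{starstar}, we solve for $f$:
\[
f(x)=\frac{1}{D(x)}\left(\tfrac12\,\Delta_\alpha f(x)+L_Gf(x)\right)
=\frac{g(x)}{2D(x)}+\frac{1}{D(x)}\int_X G(x,y)f(y)\,d\mu(y),
\]
where $g=\Delta_\alpha f$ is assumed continuous. Since $K>0$ is continuous and $\mu(U)>0$ for every nonempty open set, $D(x)=\int_X K(x,y)\chi_{U^2_\alpha}(x,y)\,d\mu(y)$ is bounded below by a positive constant (the slice $S_{x,\alpha}$ always contains $B_\alpha(x)$, which has positive measure); moreover $D$ is continuous by the same Dominated Convergence argument used for the Proposition in Section~\ref{Section3} on $dg(x)=\int_X M(x,t)g(x,t)\,d\mu(t)$. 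Hence $g/(2D)$ and $1/D$ are continuous and bounded, and it suffices to prove that $h(x)=\int_X G(x,y)f(y)\,d\mu(y)=L_Gf(x)$ is continuous.

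For the continuity of $h$, I would impose the natural analogue of the regularity hypothesis of Proposition~\ref{PoissonRegularity}: assume that $x\mapsto\int_{S_{x,\alpha}\cap A}K(x,y)\,d\mu(y)$ (equivalently $x\mapsto\int_A G(x,y)\,d\mu(y)$) is continuous for every measurable $A$. Then for $f=\chi_A$ a characteristic function, $h(x)=\int_A G(x,y)\,d\mu(y)$ is continuous by assumption, hence $h$ is continuous whenever $f$ is a simple function. For general $f\in L^2(X)$ choose simple functions $f_n$ with $f_n\to f$ a.e.\ and $|f_n|\le|f|$; then
\[
|h_n(x)-h(x)|\le\int_X G(x,y)\,|f_n(y)-f(y)|\,d\mu(y)\le\|G\|_\infty\int_X|f_n-f|\,d\mu,
\]
and since $|f_n-f|\to0$ a.e.\ with $|f_n-f|\le2|f|\in L^1(X)$, dominated convergence gives $\int_X|f_n-f|\,d\mu\to0$. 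Thus $h_n\to h$ uniformly, and continuity of each $h_n$ forces $h$ to be continuous, finishing the proof.

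The only real point requiring care — and the main obstacle — is the lower bound $D(x)\ge c>0$: one must check that the slice $S_{x,\alpha}$ (on which $G(x,\cdot)$ is supported) always has $\mu$-measure bounded away from $0$ uniformly in $x$. This follows because $B_\alpha(x)\subseteq S_{x,\alpha}$, $\mu(B_\alpha(x))>0$ by the standing hypothesis on $\mu$, and, by compactness of $X$ together with lower semicontinuity of $x\mapsto\mu(B_\alpha(x))$, this infimum is attained and positive; since also $K\ge\sigma>0$ on the compact set $X\times X$, we get $D(x)\ge\sigma\,\inf_x\mu(B_\alpha(x))>0$. Everything else is the routine measure-theoretic approximation already carried out in Proposition~\ref{PoissonRegularity}.
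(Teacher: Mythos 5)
Your proof is correct and is essentially what the paper intends: the paper's entire justification for this proposition is the phrase ``As in the previous proposition,'' i.e.\ rerun the argument of Proposition~\ref{PoissonRegularity} with the kernel $G$ in place of $\chi_{U^2_{\alpha}}$, exactly as you do. One small repair: the proposition from Section~\ref{Section3} does not literally give continuity of $D$, because $G=K\chi_{U^2_{\alpha}}$ is \emph{not} continuous on $X\times X$; but this is harmless, since your own imposed hypothesis with $A=X$ yields continuity of $D$ directly, and that hypothesis in turn follows from the paper's standing assumption that $x\mapsto\mu(S_x\cap A)$ is continuous (approximate the continuous function $K$ uniformly by simple functions in $y$ and use compactness of $X\times X$).
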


To get a better understanding of \eqref{starstar} it is useful to define a
normalization
of the kernel $G$ and the operator $L_G$ as follows. Let $\hat G\colon X\times
X\to\mathbb{R}$ be defined by
$$
\hat G(x,y)=\frac{G(x,y)}{(D(x)D(y))^{1/2}}
$$
and $L_{\hat G}\colon L^2(X)\to L^2(X)$ be the corresponding integral operator. Then
$L_{\hat G}$ is trace class, self adjoint, with non-negative eigenvalues, and
has a complete orthonormal system of continuous eigenfunctions.

!!!! referee thinks $L_{\hat G}$ is a reproducing kernel, but sees this as contradicting the next paragraph

A normalized $\alpha$-Laplacian may be defined on $L^2(X)$ by
$$
\frac{1}{2}\hat{\Delta}=I-L_{\hat G}
$$
so that the spectral theory of $L_{\hat G}$ may be transferred to
$\hat{\Delta}$. (Also, one might consider $\frac{1}{2}\Delta^{*}=I-D^{-1}L_G$
as in Belkin, De Vito, and Rosasco \cite{1}.)

In Smale and Zhou \cite{19}, for $\alpha=\infty$, error estimates are given
(reproducing kernel case) for the spectral theory of $L_{\hat G}$ in terms of
finite dimensional approximations. See especially Belkin and Niyogi \cite{2}
for limit theorems as $\alpha\to 0$.

\section{Harmonic forms on constant curvature manifolds}\label{Section6}

In this section we will give an explicit description of harmonic forms in a
special case. Let $X$ be a compact, connected, oriented manifold of dimension
$n>0$, with a Riemannian metric $g$ of constant sectional curvature. Also,
assume that $g$ is normalized so that $\mu(X)=1$ where $\mu$ is the measure
induced by the volume form associated with $g$, and let $d$ be the metric on
$X$ induced by $g$. Let $\alpha>0$ be sufficiently small so that for all $p\in
X$, the ball $B_{2\alpha}(p)$ is geodesically convex. That is, for $x,y\in
B_{2\alpha}(p)$ there is a unique, length minimizing geodesic $\gamma$ from
$x$ to $y$, and $\gamma$ lies in $B_{2\alpha}(p)$. Note that if
$(x_0,\dots,x_n)\in U^{n+1}_{\alpha}$, then $d(x_i,x_j)\leq 2\alpha$ for all
$i,j$, and thus all $x_i$ lie in a common geodesically convex ball. Such a
point defines an $n$-simplex with vertices $x_0,\dots,x_n$ whose faces are
totally geodesic submanifolds, which we will denote by
$\sigma(x_0,\dots,x_n)$. We will also denote the $k$-dimensional faces by
$\sigma(x_{i_0},\dots,x_{i_k})$ for $k<n$. Thus $\sigma(x_i,x_j)$ is the
geodesic segment from $x_i$ to $x_j$, $\sigma(x_i,x_j,x_k)$ is the union of
geodesic segments from $x_i$ to points on $\sigma(x_j,x_k)$ and higher
dimensional simplices are defined inductively. (Since $X$ has constant
curvature, this construction is symmetric in $x_0,\dots,x_n$.) A $k$-dimensional face will be called degenerate if one of its vertices is
contained in one of its $(k-1)$-dimensional faces.

Note that cohomology of the Vietoris-Rips complex has already been
considered by Hausmann~\cite{hausmann}, but his construction is quite
different from ours. He considers the limit, as $\epsilon\to0$, of the
simplicial cohomology of $X_\epsilon$. First, we contend that
important information is visible in $X_\alpha$ at particular scales
$\alpha$, possibly determined by the problem at hand, and not tending
to $0$. Second, Hausmann considers simplicial homology, with arbitrary
coefficients, while we consider $\ell^2$ cohomology, with real or
complex coefficients.

For $(x_0,\dots,x_n)\in U^{n+1}_{\alpha}$, the orientation on $X$
induces an orientation on $\sigma(x_0,\dots,x_n)$ (assuming it is
non-degenerate). For example, if $v_1,\dots,v_n$ denote the tangent
vectors at $x_0$ to the geodesics from $x_0$ to $x_1,\dots,x_n$, we
can define $\sigma(x_0,\dots,x_n)$ to be positive (negative) if
$\{v_1,\dots,v_n\}$ is a positive (respectively negative) basis for
the tangent space at $x_0$. Of course, if $\tau$ is a permutation, the
orientation of $\sigma(x_0,\dots,n)$ is equal to $(-1)^{\sign\tau}$
times the orientation of $\sigma(x_{\tau(0)},\dots,x_{\tau(n)})$. We
now define $f\colon U^{\ell+1}_{\alpha}\to\mathbb{R}$ by
\begin{align*}
f(x_0,\dots,x_n) &=\mu(\sigma(x_0,\dots,x_n))\ \ \text{for}\ \sigma(x_0,\dots,x_n)\ \text{positive}\\
&=-\mu(\sigma(x_0,\dots,x_n))\ \ \text{for}\ \sigma(x_0,\dots,x_n)\ \text{negative}\\
&=0\ \ \text{for}\ \sigma(x_0,\dots,x_n)\ \text{degenerate}.
\end{align*}

Thus $f$ is the signed volume of oriented geodesic $n$-simplices. Clearly $f$
is continuous as non-degeneracy is an open condition and the volume of a
simplex varies continuously in the vertices.

Recall that, in classical Hodge theory, every de Rham cohomology class
has a unique harmonic representative. In particular, the volume form
is harmonic, and generates top-dimensional cohomology. In our more
elaborate context, we can also pinpoint the ``form'' generating
top-dimensional cohomology. (See Remark~\ref{remCurvature} below on
relaxing the constant curvature hypothesis.) The main result of this
section is:

\begin{theorem}
  Let $X$ be a oriented Riemannian $n$-manifold of constant sectional
  curvature and $f$, $\alpha$ as above. Then $f$ is harmonic. In fact $f$ is the unique
  harmonic $n$-form in   $L^2_a(U^{n+1}_{\alpha})$ up to scaling.
\end{theorem}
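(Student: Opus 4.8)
The plan is to show two things: first, that the signed-volume cochain $f$ is harmonic, i.e.\ $\delta f=0$ and $\partial f=0$; and second, that up to scaling it is the \emph{only} harmonic $n$-form in $L^2_a(U^{n+1}_\alpha)$. For the coclosedness $\partial f=0$: by Proposition~\ref{Proposition10} we have $\partial f(x_0,\dots,x_{n-1})=(n+1)\int_{S_{x_0\cdots x_{n-1}}}f(t,x_0,\dots,x_{n-1})\,d\mu(t)$. I would argue this integral vanishes by a symmetry (involution) argument: one wants to produce, for the relevant domain of integration, a measure-preserving map $t\mapsto t'$ on the slice that reverses the orientation of $\sigma(t,x_0,\dots,x_{n-1})$, so the integrand is odd and the integral is zero. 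On a constant-curvature manifold the natural candidate is reflection of $t$ through the totally geodesic hypersurface $\sigma(x_0,\dots,x_{n-1})$; this is an isometry of a convex ball, preserves $\mu$, fixes the face $\sigma(x_0,\dots,x_{n-1})$, and sends the simplex on $t$ to its mirror image with opposite orientation but equal volume. One must check the slice $S_{x_0\cdots x_{n-1}}$ is invariant under this reflection (it is, as it is defined purely metrically), so the cancellation is genuine.

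For closedness, $\delta f(x_0,\dots,x_{n+1})=\sum_{i=0}^{n+1}(-1)^i f(x_0,\dots,\hat x_i,\dots,x_{n+1})$ for $(x_0,\dots,x_{n+1})\in U^{n+2}_\alpha$. Here the $n+2$ points all lie in a common geodesically convex ball, and $\sum_i(-1)^i\mu(\sigma(x_0,\dots,\hat x_i,\dots,x_{n+1}))=0$ is exactly the statement that the signed volumes of the $n$-faces of the geodesic $(n+1)$-simplex $\sigma(x_0,\dots,x_{n+1})$ cancel---a Stokes/boundary-of-a-boundary identity, provided the orientations are the induced boundary orientations. This is classical for Euclidean (affine) simplices; on a constant-curvature space I would either invoke that the faces bound the geodesic simplex and apply Stokes to the constant volume form, or reduce to the model space ($S^n$, $\reals^n$, or $H^n$) where $\sigma$ is literally an affine or spherical simplex. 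Degenerate configurations, where some face has zero volume, need a small separate check but cause no trouble since $f=0$ there and the identity degenerates consistently.

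For uniqueness: $f$ is a nonzero continuous element of $L^2_a(U^{n+1}_\alpha)$, and $\Harm^n_\alpha(X)\cong H^n_{L^2,\alpha}(X)$ once Hodge theory at scale $\alpha$ holds (Theorem~\ref{Theorem3}), so it suffices to show $\dim \Harm^n_\alpha(X)\le 1$, i.e.\ that any harmonic $n$-form is a scalar multiple of $f$. The approach I would take is the local-rigidity one suggested by the coclosed equation: if $g\in L^2_a(U^{n+1}_\alpha)$ has $\partial g=0$, then for every $(x_0,\dots,x_{n-1})$ in (the interior of) $U^n_\alpha$, $\int_{S_{x_0\cdots x_{n-1}}}g(t,x_0,\dots,x_{n-1})\,d\mu(t)=0$; combined with $\delta g=0$ (the alternating-sum identity among the faces of every small geodesic $(n+1)$-simplex), one should be able to show that, on the open dense set of non-degenerate configurations, $g(x_0,\dots,x_n)/f(x_0,\dots,x_n)$ is locally constant, hence (since $U^{n+1}_\alpha$ is connected and the non-degenerate locus is connected and dense) globally constant. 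Concretely: fixing $x_1,\dots,x_n$ and varying $x_0$ inside a convex ball, the cocycle condition expresses $g$ on an $n$-simplex in terms of $g$ on simplices sharing an $(n-1)$-face, which forces a ``density'' interpretation $g=c\cdot f$ locally.

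The main obstacle I anticipate is this uniqueness step: turning the two linear constraints ($\delta g=0$ and $\partial g=0$) into the rigidity $g\equiv cf$, and in particular propagating the local constant $c$ across the whole of $U^{n+1}_\alpha$ through the non-degenerate locus and across degenerate strata. The harmonicity of $f$ itself (Steps 1--2) is essentially the geometric fact that signed geodesic-simplex volume is a cocycle and is killed by the reflection-symmetric integration; that part I expect to be routine modulo careful bookkeeping of orientations and of the degenerate cases, and modulo confirming that the reflection involution preserves the metric slices.
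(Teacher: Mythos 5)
Your treatment of harmonicity is essentially the paper's. For $\partial f=0$ you propose exactly the argument the paper uses: reflect the slice $S_{x_0\cdots x_{n-1}}$ through the totally geodesic hypersurface spanned by $x_0,\dots,x_{n-1}$; constant curvature makes this reflection an isometry of $B_{2\alpha}(x_0)$ that preserves the slice and reverses the orientation of $\sigma(t,x_0,\dots,x_{n-1})$, so the integral in Proposition \ref{Proposition10} cancels. For $\delta f=0$ the paper does not invoke Stokes or reduce to a model space; it proves the identity $f(x_0,\dots,x_n)=\sum_i(-1)^i f(t,x_0,\dots,\hat x_i,\dots,x_n)$ by hand, decomposing geodesic simplices: when $t$ is interior to $\sigma(x_0,\dots,x_n)$ that simplex is the union of the $\sigma(x_0,\dots,x_{i-1},t,x_{i+1},\dots,x_n)$, and when $t$ is exterior one subdivides through the point $s$ where the geodesic from $x_0$ to $t$ meets the opposite face, with a case analysis over the position of $t$ relative to the faces. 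Your Stokes/model-space route is a legitimate alternative (a convex ball in a constant-curvature manifold is isometric to a ball in the model space, and the alternating sum of signed face volumes is the integral of the closed top-degree volume form over the boundary of a singular geodesic $(n+1)$-simplex), but as written it only sketches the identity that the paper actually proves.

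The genuine gap is uniqueness. You correctly reduce to showing $\dim\Harm^n_\alpha(X)\le 1$, but then you abandon the cohomological route and propose an unexecuted ``local rigidity'' argument ($g/f$ locally constant on the non-degenerate locus, propagated across degenerate strata), which you yourself flag as the main obstacle. This is where the proposal fails to close: $g$ is a priori only an $L^2$ function with no pointwise regularity, so ``fixing $x_1,\dots,x_n$ and varying $x_0$'' is not even well defined almost everywhere without further work, and controlling the propagation of the constant across the degenerate strata is exactly the hard part you leave open. The paper's proof is one line: uniqueness follows from Section \ref{Section9}. Concretely, Theorem \ref{Theorem9.5} and its corollary show that for $\alpha$ in the stated range Hypothesis $(\ast)$ holds, hence Hodge theory at scale $\alpha$ holds and $\Harm^n_\alpha(X)\cong H^n_{\alpha,L^2}(X)\cong H^n(X;\reals)\cong\reals$, the last isomorphism because $X$ is a compact connected oriented $n$-manifold. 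Since $f$ is a nonzero harmonic $n$-form, it spans. You should replace your rigidity paragraph by this citation; the direct argument, even if completable, proves a much harder statement than is needed.
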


\begin{proof}
  Uniqueness follows from Section \ref{Section9}. We will show that $\partial
  f=0$ and
  $\delta f=0$. Let $(x_0,\dots,x_{n-1})\in U^{n}_{\alpha}$. To show $\partial
  f=0$, it suffices to show, by Proposition \ref{Proposition10}, that
  \begin{equation}\label{6.1}
\int_{S_{x_0\cdots x_{n-1}}} f(t,x_0,\dots,x_{n-1})\, d\mu(t)=0.
\end{equation}

We may assume that $\sigma(x_0,\dots,x_{n-1})$ is non-degenerate, otherwise
the integrand is identically zero. Recall that $S_{x_0\cdots x_{n-1}}=\{t\in
X: (t,x_0,\dots,x_{n-1})\in U^{n+1}_{\alpha}\}\subset B_{2\alpha}(x_0)$
where $B_{2\alpha}(x_0)$ is the geodesic ball of radius $2\alpha$ centered at
$x_0$. Let $\Gamma$ be the intersection of the totally geodesic $n-1$
dimensional submanifold containing $x_0,\dots,x_{n-1}$ with
$B_{2\alpha}(x_0)$. Thus $\Gamma$ divides $B_{2\alpha}(x_0)$ into two pieces
$B^+$ and $B^-$. For $t\in \Gamma$, the simplex $\sigma(t,x_0,\dots,x_{n-1})$
is degenerate and therefore the orientation is constant on each of $B^+$ and
$B^-$, and we can assume that the orientation of $\sigma(t,x_0,\dots,x_{n-1})$
is positive on $B^+$ and negative on $B^-$. For $x\in B_{2\alpha}(x_0)$ define
$\phi(x)$ to be the reflection of $x$ across $\Gamma$. Thus the geodesic
segment from $x$ to $\phi(x)$ intersects $\Gamma$ perpendicularly at its
midpoint. Because $X$ has constant curvature, $\phi$ is a local isometry and
since $x_0\in\Gamma$, $d(x,x_0)=d(\phi(x),x_0)$. Therefore
$\phi\colon B_{2\alpha}(x_0)\to B_{2\alpha}(x_0)$ is an isometry which maps $B^+$
isometrically onto $B^-$ and $B^-$ onto $B^+$. Denote $S_{x_0\cdots x_{n-1}}$
by $S$. It is easy to see that $\phi\colon S\to S$, and so defining $S^{\pm}=S\cap
B^{\pm}$ it follows that $\phi\colon S^+\to S^-$ and $\phi\colon S^-\to S^+$ are
isometries. Now

\begin{align*}
\int_{S_{x_0\cdots x_{n-1}}} &f(t,x_0,\dots,x_{n-1})\, d\mu(t)\\
&=\int_{S^+} f(t,x_0,\dots,x_{n-1})\, d\mu(t)+\int_{S^-}f(t,x_0,\dots,x_{n-1})\, d\mu(t)\\
&=\int_{S^+}\mu(\sigma(t,x_0,\dots,x_{n-1}))\,
d\mu(t)-\int_{S^-}\mu(\sigma(t,x_0,\dots,x_{n-1}))\, d\mu(t).
\end{align*}

Since
$\mu(\sigma(t,x_0,\dots,x_{n-1}))=\mu(\sigma(\phi(t)t,x_0,\dots,x_{n-1}))$ for
$t\in S^+$, the last two terms on the right side cancel establishing
\eqref{6.1}. 

We now show that $\delta f=0$. Let $(t,x_0,\dots,x_n)\in
U^{n+2}_{\alpha}$. Thus
$$
\delta f(t,x_0,\dots,x_n)=f(x_0,\dots,x_n)+\sum_{i=0}^n
(-1)^{i+1}f(t,x_0,\dots,\hat x_i,\dots,x_n)
$$
and we must show that
\begin{equation}\label{6.2}
 f(x_0,\dots,x_n)=\sum_{i=0}^n (-1)^i f(t,x_0,\dots,\hat x_i,\dots,x_n).
\end{equation}
Without loss of generality, we will assume that
$\sigma(x_0,\dots,x_n)$ is positive. The demonstration of \eqref{6.2}
depends on the location of $t$. Suppose that $t$ is in the interior of
the simplex $\sigma(x_0,\dots,x_n)$. Then for each $i$, the
orientation of $\sigma(x_0,\dots,x_{i-1},t,x_{i+1},\dots,x_n)$ is the
same as the orientation of $\sigma(x_0,\dots,x_n)$ since $t$ and $x_i$
lie on the same side of the face $\sigma(x_0.\dots,\hat
x_i,\dots,x_n)$, and is thus positive. On the other hand, the
orientation of $\sigma(t,x_0,\dots\hat x_i,\dots,x_n)$ is $(-1)^i$
times the orientation of
$\sigma(x_0,\dots,x_{i-1},t,x_{i+1},\dots,x_n)$. Therefore the right
side of \eqref{6.2} becomes
$$
\sum_{i=0}^n \mu(\sigma(x_0,\dots,x_{i-1},t,x_{i+1},\dots,x_n)).
$$
This however equals $\mu(\sigma(x_0,\dots,x_n))$ which is the left side of
\eqref{6.2}, since
$$
\sigma(x_0,\dots,x_n)=\bigcup_{i=0}^n\sigma(x_0,\dots,x_{i-1},t,x_{i+1},\dots,x_n)
$$
when $t$ is interior to $\sigma(x_0,\dots,x_n)$.

There are several cases when $t$ is exterior to $\sigma(x_0,\dots,x_n)$ (or on
one of the faces), depending on which side of the various faces it lies. We
just give the details of one of these, the others being similar. Simplifying
notation, let $F_i$ denote the face ``opposite'' $x_i$, $\sigma(x_0,\dots,\hat
x_i,\dots,x_n)$, and suppose that $t$ is on the opposite side of $F_0$ from
$x_0$, but on the same side of $F_i$ as $x_i$ for $i\neq 0$. As in the above
argument, the orientation of $\sigma(x_0,\dots,x_{i-1},t,x_{i+1},\dots,x_n)$
is positive for $i\neq 0$ and is negative for $i=0$. Therefore the right side
of \eqref{6.2} is equal to

\begin{equation}\label{6.3}
\sum_{i=1}^n\mu(\sigma(x_0,\dots,x_{i-1},t,x_{i+1},\dots,x_n))-\mu(\sigma(t,x_1,\dots,x_n)).
\end{equation}

Let $s$ be the point where the geodesic from $x_0$ to $t$ intersects
$F_0$. Then for each $i>0$

\begin{multline*}
\sigma(x_0,\dots,x_{i-1},t,x_{i+1},\dots,x_n)=\sigma(x_0,\dots,x_{i-1},s,x_{i+1},\dots,x_n)\\
\cup\sigma(s,\dots,x_{i-1},t,x_{i+1},\dots,x_n). \end{multline*}

Taking $\mu$ of both sides and summing over $i$ gives

\begin{align*}
\sum_{i=1}^n\mu(\sigma(x_0,\dots,x_{i-1},t,x_{i+1},\dots,x_n))&=\sum_{i=1}^n\mu(\sigma(x_0,\dots,x_{i-1},s,x_{i+1},\dots,x_n))\\
&+\sum_{i=1}^n\mu(\sigma(s,\dots,x_{i-1},t,x_{i+1},\dots,x_n)).
\end{align*}

However, the first term on the right is just $\mu(\sigma(x_0,\dots,x_n))$ and
the second term is $\mu(\sigma(t,x_1,\dots,x_n))$. Combining this with
\eqref{6.3}
gives us \eqref{6.2}, finishing the proof of $\delta f=0$.
\end{proof}

\begin{remark}\label{remCurvature}
  The proof that $\partial f=0$ strongly used the fact that $X$ has constant
  curvature. In the case where $X$ is an oriented Riemannian surface of
  variable curvature, totally geodesic
  $n$ simplices don't generally exist, although geodesic triangles
  $\sigma(x_0,x_1,x_2)$ are well defined for $(x_0,x_1,x_2)\in
  U^3_{\alpha}$. In this case, the proof above shows that $\delta f=0$. More
  generally, for an $n$-dimensional connected oriented Riemannian manifold,
  using the
  order of a tuple $(x_0,\dots, x_n)$ one can iteratively form convex
  combinations and in this way assign an oriented $n$-simplex to
  $(x_0,\dots,x_n)$ and then define the volume cocycle as above (if $\alpha$
  is small enough).

  Using a chain map to simplicial cohomology which evaluates at the vertices'
  points, it is easy to check that these cocycles represent a generator of the
  cohomology in degree $n$ (which by the results of Section \ref{Section9} is
  exactly $1$-dimensional).
\end{remark}

\section{Cohomology}\label{Section7}

Traditional cohomology theories on general spaces are typically defined in
terms of limits as in \v Cech theory, with nerves of coverings. However, an
algorithmic approach suggests a development via a scaled theory, at a given
scale $\alpha>0$. Then, as $\alpha\to 0$ one recovers the classical setting. A
closely related point of view is that of persistent homology, see
Edelsbrunner, Letscher, and Zomorodian \cite{9}, Zomorodian and Carlsson
\cite{25}, and Carlsson \cite{26}.

We give a setting for such a scaled theory, with a fixed scaling parameter
$\alpha>0$.

Let $X$ be a separable, complete metric space with metric $d$, and $\alpha>0$
a ``scale''. We will define a (generally infinite) simplicial complex
$C_{X,\alpha}$ associated to $(X,d,\alpha)$. Toward that end let $X^{\ell+1}$,
for $\ell\geq 0$, be the $(\ell+1)$-fold Cartesian product, with metric still
denoted by $d$, $d\colon X^{\ell+1}\times X^{\ell+1}\to\mathbb{R}$ where
$d(x,y)=\max_{i=0,\dots,\ell}d(x_i,y_i)$. As in Section \ref{Section4}, let
$$
U^{\ell+1}_{\alpha}(X)=U^{\ell+1}_{\alpha}=\{x\in X^{\ell+1}:
d(x,D_{\ell+1})\leq\alpha\}
$$
where $D_{\ell+1}\subset X^{\ell+1}$ is the diagonal, so
$D_{\ell+1}=\{(t,\dots,t)\ \ell+1\ \text{times}\}$. Then let
$C^\ell_{X,\alpha}=U^{\ell+1}_{\alpha}$. This has the
structure of a simplicial complex whose $\ell$-simplices consist of points of
$U^{\ell+1}_{\alpha}$. This is well defined since if $x\in
U^{\ell+1}_{\alpha}$, then $y=(x_0,\dots,\hat x_i,\dots,x_{\ell})\in
U^{\ell}_{\alpha}$, for each $i=0,\dots,\ell$. We will write $\alpha=\infty$
to mean that $U^{\ell}_{\alpha}=X^{\ell}$. Following e.g.~Munkres \cite{17},
there is a well-defined cohomology theory, simplicial cohomology, for this
simplicial complex, with cohomology vector spaces (always over $\mathbb{R}$),
denoted by $H^{\ell}_{\alpha}(X)$. We especially note that $C_{X,\alpha}$ is
not necessarily a finite simplicial complex. For example, if $X$ is an open
non-empty subset of Euclidean space, the vertices of $C_{X,\alpha}$ are the
points of $X$ and of course infinite in number. The complex $C_{X,\alpha}$
will be called the simplicial complex at scale $\alpha$ associated to
$X$.

\begin{example}
  $X$ is finite. Fix $\alpha>0$. In this case, for each $\ell$, the set of
  $\ell$-simplices is finite, the $\ell$-chains form a finite dimensional
  vector space and the $\alpha$-cohomology groups (i.e. vector spaces)
  $H^{\ell}_{\alpha}(X)$ are all finite dimensional. One can check that for
  $\alpha=\infty$, one has dim$H^0_{\alpha}(X)=1$ and $H^i_{\alpha}(X)$ are
  trivial for all $i>0$. Moreover, for $\alpha$ sufficiently small
  ($\alpha<\min\{d(x,y): x,y\in X, \ x\neq y\}$)
  dim$H^0_{\alpha}(X)=$cardinality of $X$, with $H^i_{\alpha}(X)=0$ for all
  $i>0$. For intermediate $\alpha$, the $\alpha$-cohomology can be rich in
  higher dimensions, but $C_{X,\alpha}$ is a finite simplicial complex.
\end{example}

\begin{example}
  First let $A\subset \mathbb{R}^2$ be the annulus $A=\{x\in\mathbb{R}^2:
  1\leq\|x\|\leq 2\}$. Form $A^{\ast}$ by deleting the finite set of points
  with rational coordinates $(p/q,r/s)$, with $|q|,|s|\leq 10^{10}$. Then one may check
  that for $\alpha>4$, $H^{\ell}_{\alpha}(A^{\ast})$ has the cohomology of a
  point, for certain intermediate values of $\alpha$,
  $H^{\ell}_{\alpha}(A^{\ast})=H^{\ell}_{\alpha}(A)$, and for $\alpha$ small
  enough $H^{\ell}_{\alpha}(A^{\ast})$ has enormous dimension. Thus the scale
  is crucial to see the features of $A^{\ast}$ clearly.
\end{example}
Returning to the case of general $X$, note that if $0<\beta<\alpha$ one has a
natural inclusion $J\colon U^{\ell}_{\beta}\to U^{\ell}_{\alpha}$, $J\colon C_{X,\beta}\to
C_{X,\alpha}$ and the restriction $J^{\ast}\colon L^2_a(U^{\ell}_{\alpha})\to
L^2_a(U^{\ell}_{\beta})$ commuting with $\delta$ (a chain map).

Now assume $X$ is compact. For fixed scale $\alpha$, consider the covering
$\{B_{\alpha}(x):x\in X\}$, where $B_{\alpha}(x)$ is the ball
$B_{\alpha}(x)=\{y\in X: d(x,y)<\alpha\}$, and the nerve of the covering is
$C_{X,\alpha}$, giving the ``\v Cech construction at scale $\alpha$''. Thus from
\v Cech cohomology theory, we see that the limit as $\alpha\to 0$ of
$H^{\ell}_{\alpha}(X)=H^{\ell}(X)=H^{\ell}_{\text{\v Cech}}(X)$ is the $\ell$-th \v Cech
cohomology group of $X$.

The next observation is to note that our construction of the scaled simplicial
complex $C_{X,\alpha}$ of $X$ follows the same path as Alexander-Spanier
theory (see Spanier \cite{21}). Thus the scaled cohomology groups
$H^{\ell}_{\alpha}(X)$ will have the direct limit as $\alpha\to 0$ which maps
to the Alexander-Spanier group $H^{\ell}_{\text{Alex-Sp}}(X)$ (and in many cases will
be isomorphic). Thus
$H^{\ell}(X)=H^{\ell}_{\text{Alex-Sp}}(X)=H^{\ell}_{\text{\v Cech}}(X)$. In fact in much of the
literature this is recognized by the use of the term Alexander-Spanier-\v Cech
cohomology. What we have done is describe a finite scale version of the
classical cohomology.

Now that we have defined the scale $\alpha$ cohomology groups,
$H^{\ell}_{\alpha}(X)$ for a metric space $X$, our Hodge theory suggests this
modification. From Theorem \ref{Theorem3}, we have considered instead of
arbitrary cochains
(i.e. arbitrary functions on $U^{\ell+1}_{\alpha}$ which give the definition
here of $H^{\ell}_{\alpha}(X)$), cochains defined by $L^2$-functions on
$U^{\ell+1}_{\alpha}$. Thus we have constructed cohomology groups at scale
$\alpha$ from $L^2$-functions on $U^{\ell+1}_{\alpha}$,
$H^{\ell}_{\alpha,L^2}(X)$, when $\alpha>0$, and $X$ is a metric space
equipped with Borel probability measure.

\begin{question}[Cohomology Identification Problem (CIP)]\label{qCIP}
   To what extent are $H^{\ell}_{L^2,\alpha}(X)$ and
  $H^{\ell}_{\alpha}(X)$ isomorphic?
\end{question}

This is important via Theorem \ref{Theorem3} which asserts that
$H^{\ell}_{\alpha,L^2}(X)\to \Harm_{\alpha}^{\ell}(X)$ is an
isomorphism, in case $H^{\ell}_{\alpha,L^2}(X)$ is finite dimensional.

One may replace $L^2$-functions in the construction of the $\alpha$-scale
cohomology theory by continuous functions. As in the $L^2$-theory, this gives
rise to cohomology groups $H^{\ell}_{\alpha,cont}(X)$. Analogous to CIP we
have the simpler question: To what extent is the natural map
$H^{\ell}_{\alpha,cont}(X)\to H^{\ell}_{\alpha}(X)$ an isomorphism?

We will give answers to these questions for special $X$ in Section
\ref{Section9}.

Note that in the case $X$ is finite, or $\alpha=\infty$, we have an
affirmative answer to this question, as well as CIP (see Sections \ref{Section2} and \ref{Section3}).

\begin{proposition}\label{PropositionA}
  There is a natural injective linear map
  $$\Harm^{\ell}_{cont,\alpha}(X)\to
  H^{\ell}_{cont,\alpha}(X).$$
\end{proposition}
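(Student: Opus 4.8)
The plan is to construct the map in the obvious way---by sending a harmonic cochain to its class in simplicial cohomology---and then to show injectivity using the interplay between the coboundary $\delta$, its adjoint $\partial = \delta^*$, and the defining property of harmonic forms established in the remark after \eqref{2.6}, namely that $\Delta f = 0$ if and only if $\delta f = 0$ and $\partial f = 0$.

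First I would recall that an element of $\Harm^{\ell}_{cont,\alpha}(X)$ is, by definition, a continuous alternating function $f$ on $U^{\ell+1}_\alpha$ with $\Delta_{\ell,\alpha} f = 0$, hence with $\delta f = 0$; in particular $f$ is a continuous $\ell$-cocycle in the simplicial complex $C_{X,\alpha}$, so it represents a class $[f] \in H^{\ell}_{cont,\alpha}(X)$. Sending $f \mapsto [f]$ is manifestly linear, and it is natural with respect to the corestriction maps induced by inclusions $U^\ell_\beta \subseteq U^\ell_\alpha$ for $\beta < \alpha$, since those inclusions commute with $\delta$. So the content of the proposition is injectivity.

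For injectivity, suppose $f \in \Harm^{\ell}_{cont,\alpha}(X)$ and $[f] = 0$, i.e.\ $f = \delta h$ for some continuous alternating $(\ell-1)$-cochain $h$ on $U^{\ell}_\alpha$. I want to conclude $f = 0$. Since $f$ is harmonic we have $\partial f = 0$, so $\langle f, f \rangle = \langle \delta h, f \rangle = \langle h, \partial f \rangle = 0$, using Proposition~\ref{Proposition4} (that $\partial = \delta^*$ as operators on the relevant $L^2_a$ spaces) and the fact that $h$, being continuous on the compact set $U^{\ell}_\alpha$ with the finite measure $\mu_\ell$, lies in $L^2_a(U^{\ell}_\alpha)$ so the adjoint relation applies. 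Hence $\|f\|_{L^2}^2 = 0$, so $f = 0$ almost everywhere, and since $f$ is continuous and $\mu$ gives positive mass to every nonempty open set, $f \equiv 0$. This shows the kernel of $f \mapsto [f]$ is trivial.

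The main obstacle---really the only subtle point---is making sure the $L^2$-pairing and the adjoint identity $\langle \delta h, f\rangle = \langle h, \partial f\rangle$ are legitimately applicable: one needs $h$ and $f$ to genuinely be in the $L^2_a$ spaces on which $\delta$ and $\partial = \delta^*$ were set up in Section~\ref{Section4}, and one needs $\partial f = 0$ in that $L^2$ sense (not merely formally). Both hold here because $X$ is compact, continuous functions on the $U^{\ell}_\alpha$ are bounded hence square-integrable against the finite measures $\mu_\ell$, and $f$ being in $\Harm^{\ell}_{cont,\alpha}(X) \subseteq \Ker\Delta_{\ell,\alpha}$ means precisely $\delta f = 0$ and $\partial f = 0$ as $L^2$ identities by the remark following \eqref{2.6}. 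Everything else is bookkeeping. I would also remark explicitly that this map is the continuous analogue of the isomorphism $H^\ell_{L^2,\alpha}(X) \cong \Harm^\ell_\alpha(X)$ from the Hodge Lemma, restricted to continuous representatives, which is why injectivity is automatic while surjectivity (the genuinely hard regularity question) is not claimed here.
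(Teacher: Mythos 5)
Your proposal is correct and is essentially the paper's argument: the paper phrases it as the injection of the quotient $(\Img_{cont,\alpha}\delta\oplus\Harm^{\ell}_{cont,\alpha}(X))/\Img_{cont,\alpha}\delta$ into $\Ker_{cont,\alpha}/\Img_{cont,\alpha}$, and the directness of that sum is exactly your orthogonality computation $\langle\delta h,f\rangle=\langle h,\partial f\rangle=0$. You have simply unwound the same $L^2$-orthogonality of $\Img\delta$ and $\Ker\Delta$ into an explicit verification, which is fine.
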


\begin{proof}
  The inclusion, which is injective
$$
J\colon
\Img_{cont,\alpha}\delta\oplus\Harm^{\ell}_{cont,\alpha}(X)\to\Ker_{cont,\alpha}
$$
induces an injection
$$
J^{\ast}\colon \Harm^{\ell}_{cont,\alpha}(X)=\frac{\Img_{cont,\alpha}\delta\oplus\Harm^{\ell}_{cont,\alpha}(X)}{\Img_{cont,\alpha}\delta}\to
\frac{\Ker_{cont,\alpha}}{\Img_{cont,\alpha}}=H^{\ell}_{cont,\alpha}(X)
$$
and the proposition follows.
\end{proof}

\section{Continuous Hodge theory on the neighborhood of the
  diagonal}\label{Section8}

As in the last section, $(X,d)$ will denote a compact metric space equipped
with a Borel probability measure $\mu$. For topological reasons (see Section
\ref{Section6}) it would be nice to have a Hodge decomposition for continuous
functions on
$U^{\ell+1}_{\alpha}$, analogous to the continuous theory on the whole space
(Section \ref{Section4}). We will use the following
notation. $C^{\ell+1}_{\alpha}$ will
denote the continuous alternating real valued functions on
$U^{\ell+1}_{\alpha}$, $\Ker_{\alpha,cont}\Delta_{\ell}$ will denote the
functions in $C^{\ell+1}_{\alpha}$ that are harmonic, and
$\Ker_{\alpha,cont}\delta_{\ell}$ will denote those elements of
$C^{\ell+1}_{\alpha}$ that are closed. Also, $H^{\ell}_{\alpha,cont}(X)$ will
denote the quotient space (cohomology space)
$\Ker_{\alpha,cont}\delta_{\ell}/\delta(C^{\ell}_{\alpha})$. We raise the
following question, analogous to Theorem \ref{Theorem3}.

\begin{question}[Continuous Hodge Decomposition]\label{qCHD} Under what conditions on
  $X$ and
$\alpha>0$ is it true that there is the following orthogonal (with respect to
the $L^2$-inner product) direct sum decomposition
$$
C^{\ell+1}_{\alpha}=
\delta(C^{\ell}_{\alpha})\oplus \partial(C^{\ell+2}_{\alpha})\oplus
\Ker_{\alpha,cont}\Delta_{\ell}
$$
where $\Ker_{cont,\alpha}\Delta_{\ell}$ is isomorphic to
$H^{\ell}_{\alpha,cont}(X)$, with every element in $H^{\ell}_{\alpha,cont}(X)$
having a unique representative in $\Ker_{\alpha,cont}\Delta_{\ell}$?
\end{question}

There is a related analytical problem that is analogous to elliptic regularity
for partial differential equations, and in fact elliptic regularity features
prominently in classical Hodge theory.

\begin{question}[The Poisson Regularity Problem] \label{qPRP} For $\alpha>0$, and $\ell>0$, suppose that
$\Delta f=g$ where $g\in C^{\ell+1}_{\alpha}$ and $f\in
L^2_a(U^{\ell+1}_{\alpha})$. Under what conditions on $(X,d,\mu)$ is $f$
continuous?
\end{question}

\begin{theorem}
  An affirmative answer to the Poisson Regularity problem, together with
  closed image $\delta(L^2_a(U^{\ell}_{\alpha}))$ implies an affirmative
  solution to the continuous Hodge decomposition question.
\end{theorem}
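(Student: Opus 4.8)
The plan is to mimic the structure of the proof of Theorem~\ref{Theorem2} in Section~\ref{Section3}, replacing the whole-space complex by the complex on $U^{\ell+1}_\alpha$ and feeding in the Poisson regularity hypothesis wherever Proposition~\ref{Proposition7} was previously invoked. First I would note that the hypothesis "closed image $\delta(L^2_a(U^\ell_\alpha))$" lets us apply the Hodge Lemma (Lemma~\ref{Lemma1}), so on the $L^2$ level we already have the orthogonal decomposition $L^2_a(U^{\ell+1}_\alpha)=\Img\delta\oplus\Img\delta^\ast\oplus\Ker\Delta$ together with the isomorphism $H^\ell_{L^2,\alpha}(X)\cong\Harm^\ell_\alpha(X)$ and the corollary that $\delta\colon\Img\delta^\ast\to\Img\delta$ and $\delta^\ast\colon\Img\delta\to\Img\delta^\ast$ are isomorphisms with bounded inverses. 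The goal is then to promote this $L^2$-decomposition to the continuous subspace $C^{\ell+1}_\alpha$, i.e. to show that each of the three projections $P_1,P_2,P_3$ maps $C^{\ell+1}_\alpha$ into itself.

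The key steps, in order: (1) Given $f\in C^{\ell+1}_\alpha$, write $f=P_1 f+P_2 f+P_3 f$ with $P_1 f\in\Img\delta$, $P_2 f\in\Img\delta^\ast$, $P_3 f\in\Ker\Delta$. (2) Show $P_1 f\in C^{\ell+1}_\alpha$: by the corollary to the Hodge Lemma there is a unique $h\in\Img\partial$ with $P_1 f=\delta h$, and since $\partial h=0$ we get $\Delta h=\partial\delta h=\partial f$ (using formula~\eqref{4.4}), which is continuous and bounded because $\partial$ maps continuous bounded functions to continuous bounded functions. Then $h\in L^2_a(U^\ell_\alpha)$ solves $\Delta h=\partial f\in C^\ell_\alpha$, so by the affirmative answer to the Poisson Regularity Problem, $h\in C^\ell_\alpha$, hence $P_1 f=\delta h\in C^{\ell+1}_\alpha$. (3) Symmetrically, show $P_2 f\in C^{\ell+1}_\alpha$: the unique $h'\in\Img\delta$ with $P_2 f=\partial h'=\delta^\ast h'$ satisfies $\Delta h'=\delta\partial h'=\delta\delta^\ast h'$; noting $\delta\partial h'=\delta(P_2 f)$ and $\delta(P_2 f)=\delta f-\delta(P_1 f)$ is continuous and bounded, and $\delta^\ast h'=P_2 f$, one sets up $\Delta h'=\delta f - \delta P_1 f + (\text{correction})$ continuous; applying Poisson regularity again gives $h'\in C^{\ell+1}_\alpha$ hence $P_2 f=\partial h'\in C^{\ell+1}_\alpha$. (One must be slightly careful about which side the operator lives on; the cleanest route is to apply Poisson regularity on $U^{\ell+2}_\alpha$ to the equation $\Delta h' = \delta(P_2 f) = \delta f - \delta P_1 f$, which is continuous.) (4) Conclude $P_3 f = f - P_1 f - P_2 f\in C^{\ell+1}_\alpha\cap\Ker\Delta=\Ker_{\alpha,cont}\Delta_\ell$. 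This yields the orthogonal direct sum decomposition $C^{\ell+1}_\alpha=\delta(C^\ell_\alpha)\oplus\partial(C^{\ell+2}_\alpha)\oplus\Ker_{\alpha,cont}\Delta_\ell$, orthogonality being inherited from the $L^2$-orthogonality.

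For the final clause — that $\Ker_{\alpha,cont}\Delta_\ell$ is isomorphic to $H^\ell_{\alpha,cont}(X)$ with unique representatives — I would argue exactly as in the proof of the Hodge Lemma: the projection $\tilde P\colon\Ker_{\alpha,cont}\delta_\ell/\delta(C^\ell_\alpha)\to\Ker_{\alpha,cont}\Delta_\ell$, $[f]\mapsto P_3 f$, is well-defined (if $[f]=[g]$ then $f-g\in\delta(C^\ell_\alpha)\subseteq\Img\delta$ so $P_3(f-g)=0$), injective (if $P_3 f=0$ then $f\in\Img\delta\oplus\Img\delta^\ast$, but $f\in\Ker\delta_\ell$ forces the $\Img\delta^\ast$-component to vanish since $\delta$ is injective on $\Img\delta^\ast$, so $f\in\Img\delta$, and by the just-proven continuous decomposition in fact $f\in\delta(C^\ell_\alpha)$, i.e. $[f]=0$), and surjective (a continuous harmonic form is closed and is its own $P_3$). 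Thus every continuous cohomology class has a unique harmonic representative.

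The main obstacle is step (3), closing up $P_2$: there is an asymmetry because $\partial$ lowers degree and the Poisson regularity statement is phrased for a given degree, so one has to be disciplined about which space each auxiliary element lives in and make sure the equation one feeds into Poisson regularity genuinely has a continuous right-hand side and an $L^2$ solution. The fix is to run the argument "one degree up": write $P_2 f = \partial h'$ with $h'\in\Img\delta\subseteq L^2_a(U^{\ell+2}_\alpha)$, observe $\Delta_{\ell+1} h' = \delta\partial h' = \delta(P_2 f) = \delta f - \delta(P_1 f)$, which is continuous and bounded by step (2) and the boundedness of $\delta$ on continuous functions, so Poisson regularity at degree $\ell+1$ gives $h'$ continuous and hence $P_2 f=\partial h'$ continuous. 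Everything else is a routine transcription of the Section~\ref{Section3} argument to the neighborhood-of-the-diagonal setting, using the formulas for $\partial$, $\delta\partial$, and $\partial\delta$ recorded after Theorem~\ref{Theorem3}.
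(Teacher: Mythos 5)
Your argument is correct and is essentially the paper's own proof: both start from the $L^2$ Hodge decomposition supplied by the closed-image hypothesis, use the corollary of the Hodge Lemma to write the $\Img\delta$ and $\Img\partial$ components as $\delta h$ with $h\in\Img\partial$ and $\partial h'$ with $h'\in\Img\delta$, observe that $\Delta h=\partial f$ and $\Delta h'=\delta f$ are continuous, and invoke Poisson regularity (one degree down and one degree up, respectively). Your only deviations are cosmetic --- you obtain continuity of the harmonic part by subtraction rather than by applying Poisson regularity to $\Delta f_3=0$, and you spell out the cohomology isomorphism that the paper leaves implicit.
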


\begin{proof}
  Assume that the Poisson regularity property holds, and let $f\in
  C^{\ell+1}_{\alpha}$. From Theorem \ref{Theorem3} we have the $L^2$-Hodge
  decomposition
$$
f=\delta f_1 +\partial f_2 +f_3
$$
where $f_1\in L^2_a(U^{\ell}_{\alpha})$, $f_2\in L^2_a(U^{\ell+2}_{\alpha})$
and $f_3\in L^2_a(U^{\ell+1}_{\alpha})$ with $\Delta f_3=0$. It suffices to
show that $f_1$ and $f_2$ can be taken to be continuous, and $f_3$ is
continuous. Since $\Delta f_3=0$ is continuous, $f_3$ is continuous by Poisson
regularity. We will show that $\partial f_2=\partial(\delta h_2)$ where
$\delta h_2$ is continuous (and thus $f_2$ can be taken to be
continuous). Recall (corollary of the Hodge Lemma in Section \ref{Section2})
that the following maps are isomorphisms
$$
\delta\colon \partial (L^2_a(U^{\ell+2}_{\alpha}))\to
\delta(L^2_a(U^{\ell+1}_{\alpha}))\ \text{and} \ \partial\colon
\delta(L^2_a(U^{\ell}_{\alpha}))\to \partial(L^2_a(U^{\ell+1}_{\alpha}))
$$
for all $\ell\geq 0$. Thus
$$
\partial f_2=\partial(\delta h_2)\ \text{for some}\ h_2\in
L^2_a(U^{\ell+1}_{\alpha}).
$$
Now,
\begin{equation}\label{star2}
\Delta
(\delta(h_2))=\delta(\partial(\delta(h_2)))+\partial(\delta(\delta(h_2)))=\delta(\partial(\delta(h_2)))=\delta(\partial(f_2))
\end{equation}
since $\delta^2=0$. However, from the decomposition for $f$ we have, since
$\delta f_3=0$
$$
\delta f=\delta(\partial f_2)
$$
and since $f$ is continuous $\delta f$ is continuous, and therefore
$\delta(\partial f_2)$ is continuous. It then follows from Poisson regularity
and~\eqref{star2} that $\delta h_2$ is continuous as to be shown. A dual argument shows
that $\delta f_1=\delta(\partial h_1)$ where $\partial h_1$ is continuous,
completing the proof.
\end{proof}

Notice that a somewhat weaker result than Poisson regularity would imply that
$f_3$ above is continuous, namely regularity of harmonic functions.

\begin{question}[Harmonic Regularity Problem] \label{qHRP} For $\alpha>0$, and $\ell>0$,
  suppose that $\Delta f=0$ where $f\in L^2_a(U^{\ell+1}_{\alpha})$. What
  conditions on $(X,d,\mu)$ would imply $f$ is continuous?
\end{question}

Under some additional conditions on the measure, we have answered this for
$\ell=0$ (see Section \ref{Section5}) and can do so for $\ell=1$, which we now
consider.

We assume in addition that the inclusion of continuous functions into
$L^2$-functions induces an epimorphism of the associated
Alexander-Spanier-\v Cech cohomology groups, i.e.~that every cohomology class in
the $L^2$-theory has a continuous representative. In Section \ref{Section9} we
will see that this is often the case.

Let now $f\in L^2_a(U^2_\alpha)$ be harmonic. Let $g$ be a continuous function
in the same cohomology class. Then there is $x\in L^2_a(U^1_\alpha)$ such that
$f=g+dx$. As $\delta^* f=0$ it follows that $\delta^*dx=-\delta^*g$ is
continuous. If the Poisson regularity property in degree zero holds (compare
Proposition \ref{PoissonRegularity} of Section \ref{Section5})  then $x$ is
continuous and therefore also $f=g+dx$ is continuous.

Thus  we
have the following
proposition.
\begin{proposition}
  Assume that $\mu(S_x\cap A)$%, $\mu(S_{xy})$ and $\mu^2((S_{xy}\times S_x)\cap A)$
  are continuous for $x\in X$ and all $A$ measurable. Assume that every
  cohomology class of degree $1$ has a continuous representative. If $f$ is an
  $\alpha$-harmonic $1$-form in $L^2_a(U^2_{\alpha})$, then $f$ is continuous.
\end{proposition}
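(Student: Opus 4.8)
The plan is to carry out precisely the argument indicated in the paragraph preceding the statement, supplying the two analytic ingredients it relies on. Throughout I write $\delta=\delta_0\colon L^2_a(U^1_\alpha)\to L^2_a(U^2_\alpha)$ and $\partial=\delta^{\ast}$ for its adjoint, and I use that $U^1_\alpha=X$, so $L^2_a(U^1_\alpha)=L^2(X)$ and $\Delta_0=\partial\delta$ on $L^2(X)$ (as in Section \ref{Section5}). The strategy is: move $f$ to a cohomologous \emph{continuous} $2$-form $g$, so that the remaining ``coboundary part'' $x$ is a $0$-form solving a Poisson equation whose right-hand side we can show is continuous, and then quote the degree-$0$ Poisson regularity result already proved (Proposition \ref{PoissonRegularity}).

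First I would use the hypothesis that every degree-$1$ $L^2$-cohomology class has a continuous representative. Since $f$ is $\alpha$-harmonic, the remark following \eqref{2.6} gives $\delta f=0$ and $\partial f=0$; in particular $f$ is closed and determines a class in $H^1_{L^2,\alpha}(X)$, so there is a closed $g\in C^2_\alpha$ (automatically bounded, as $U^2_\alpha$ is compact) with $f-g=\delta x$ for some $x\in L^2(X)$. Applying $\partial$ to $f=g+\delta x$, and using $\partial f=0$ together with $\partial x=0$ (the operator $\partial_0$ on $0$-forms is zero), yields
$$
\Delta_0 x=\partial\delta x=-\partial g .
$$
Thus $x$ solves a Poisson equation on $X$ with right-hand side $-\partial g$.

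The key analytic point is that $\partial g$ is continuous. By Proposition \ref{Proposition10} with $\ell=1$,
$$
\partial g(x_0)=2\int_{S_{x_0}}g(t,x_0)\,d\mu(t)
=2\int_X g(t,x_0)\,\chi_{U^2_\alpha}(t,x_0)\,d\mu(t),
$$
an integral over a slice that moves with $x_0$. This is exactly the situation handled in the proof of Proposition \ref{PoissonRegularity}: approximating $g$ by simple functions in the $t$-variable, invoking the hypothesis that $x\mapsto\mu(S_x\cap A)$ is continuous for every measurable $A$, and using dominated convergence together with the (uniform) continuity of $g$ on the compact set $U^2_\alpha$, one gets $\partial g\in C(X)$. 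Since the measure hypothesis is precisely that of Proposition \ref{PoissonRegularity}, from $\Delta_0 x=-\partial g\in C(X)$ that proposition gives $x\in C(X)$. Finally $\delta x$ is continuous (immediate from \eqref{2.3} with $K\equiv1$), hence $f=g+\delta x$ is continuous.

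I expect the only genuinely non-formal step to be the continuity of $\partial g$, i.e. that $\partial=\delta^{\ast}$ carries continuous forms to continuous forms in the scaled ($\alpha$) setting despite the slice of integration varying with the base point; this is driven by the same measure-regularity mechanism as Proposition \ref{PoissonRegularity}, which is why that hypothesis reappears in the statement. Everything else is bookkeeping: replacing $f$ by a cohomologous continuous form, extracting the $0$-form $x$, and quoting the already-established degree-$0$ Poisson regularity.
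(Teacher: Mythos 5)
Your proof is correct and follows essentially the same route as the paper's: replace $f$ by a cohomologous continuous closed form $g$, observe that the $0$-form $x$ with $f=g+\delta x$ satisfies $\Delta_0 x=\partial\delta x=-\partial g$, and invoke the degree-zero Poisson regularity result. The only place you go beyond the paper's (very terse) argument is in justifying that $\partial g$ is continuous --- a point the paper simply asserts --- and your reduction of this to the continuity of $x\mapsto\mu(S_x\cap A)$ by the same mechanism as Proposition \ref{PoissonRegularity} is exactly the right way to fill that in.
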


As in Section \ref{Section5}, if $\mu$ is Borel regular, it suffices that the
hypotheses
hold for all $A$ closed (or all $A$ open).

\section{Finite dimensional cohomology}\label{Section9}

In this section, we will establish conditions on $X$ and $\alpha>0$ that imply
that the $\alpha$ cohomology is finite dimensional. In particular, in
the case of the $L^2$-$\alpha$ cohomology, they imply that the image
of $\delta$ is closed, and that
 Hodge theory for $X$ at scale $\alpha$ holds. Along the way, we will
compute the $\alpha$-cohomology in terms of ordinary \v Cech cohomology of a
covering and that the different variants of our Alexander-Spanier-\v Cech
cohomology at fixed scale ($L^2$, continuous,\ldots) are all isomorphic. We
then show that the
important class of metric spaces, Riemannian manifolds satisfy these
conditions for $\alpha$ small. In particular, in this case the
$\alpha$-cohomology will be isomorphic to ordinary cohomology with
$\mathbb{R}$-coefficients.

{Note that in \cite[Section 4]{MR1377309}, a Rips version of the
$L^2$-Alexander-Spanier complex a finite scale is introduced which is similar
to ours. It is then sketched how, for sufficiently small scales on a manifold
or a simplicial complex, its cohomology should be computable in terms of the
$L^2$-simplicial or $L^2$-de Rham cohomology, without giving detailed
arguments. These results are rather similar to our results. The fact that we
work with the $\alpha$-neighborhood of the diagonal causes some additional
difficulties we have to overcome.}

Throughout this section, $(X,d)$ will denote a compact metric space, $\mu$ a
Borel probability measure on $X$ such that $\mu(U)>0$ for all nonempty open
sets $U\subset X$, and $\alpha>0$. As before $U^{\ell}_{\alpha}$ will denote
the closed $\alpha$-neighborhood of the diagonal in $X^{\ell}$. We will denote
by $F_a(U^{\ell}_{\alpha})$ the space of all alternating real valued functions
on $U^{\ell}_{\alpha}$, by $C_a(U^{\ell}_{\alpha})$ the continuous alternating
real valued functions on $U^{\ell}_{\alpha}$, and by $L^p_a(U^{\ell}_{\alpha})$
the $L^p$ alternating real valued functions on $U^{\ell}_{\alpha}$ for $p\geq
1$ (in particular, the case $p=2$ was discussed in the preceding sections). If
$X$ is a smooth
Riemannian manifold, $C^{\infty}_a(U^{\ell}_{\alpha})$ will be the smooth
alternating real valued functions on $U^{\ell}_{\alpha}$. We will be
interested in the following cochain complexes:

\begin{equation*}
\begin{CD}
  0 @>>> L^p_a(X) @>{\delta_0} >> L^p_a(U^2_{\alpha}) @>{\delta_1} >> \cdots
  @>{\delta_{\ell-1} }>> L^p_a(U^{\ell+1}_{\alpha}) @>{\delta_{\ell}} >>
  \cdots
\end{CD}
\end{equation*}

\begin{equation*}
  \begin{CD}
    0 @>>> C_a(X) @>{\delta_0} >> C_a(U^2_{\alpha}) @>{\delta_1} >> \cdots
    @>{\delta_{\ell-1} }>> C_a(U^{\ell+1}_{\alpha}) @>{\delta_{\ell}} >>
    \cdots
  \end{CD}
\end{equation*}

\begin{equation*}
\begin{CD}
  0 @>>> F_a(X) @>{\delta_0} >> F_a(U^2_{\alpha}) @>{\delta_1} >> \cdots
  @>{\delta_{\ell-1} }>> F_a(U^{\ell+1}_{\alpha}) @>{\delta_{\ell}} >> \cdots
\end{CD}
\end{equation*}

And if $X$ is a smooth Riemannian manifold,
\begin{equation*}
\begin{CD}
  0 @>>> C^{\infty}_a(X) @>{\delta_0} >> C^{\infty}_a(U^2_{\alpha})
  @>{\delta_1} >> \cdots @>{\delta_{\ell-1} }>>
  C^{\infty}_a(U^{\ell+1}_{\alpha}) @>{\delta_{\ell}} >> \cdots
\end{CD}
\end{equation*}

The corresponding cohomology spaces
$\Ker\delta_{\ell}/\Img\delta_{\ell-1}$ will be denoted by
$H^{\ell}_{\alpha,L^p}(X)$, or briefly $H^{\ell}_{\alpha,L^p}$,
$H^{\ell}_{\alpha,cont}$, $H^{\ell}_{\alpha}$ and $H^{\ell}_{\alpha,smooth}$
respectively. The proof of finite dimensionality of these spaces, under
certain conditions, involves the use of bicomplexes, some facts about which we
collect here.

A bicomplex $C^{\ast,\ast}$ will be a rectangular array of vector spaces
$C^{j,k}$, $j,k\geq 0$, and linear maps (coboundary operators)
$c_{j,k}\colon C^{j,k}\to C^{j+1,k}$, and $d_{j,k}\colon C^{j,k}\to C^{j,k+1}$ such that
the rows and columns are chain complexes, that is $c_{j+1,k} c_{j,k}=0$,
$d_{j,k+1} d_{j,k}=0$, and $c_{j,k+1}d_{j,k}=d_{j+1,k}c_{j,k}$. Given such a
bicomplex, we associate the total complex $E^{\ast}$, a chain complex

\begin{equation*}
\begin{CD}
  0 @>>> E^0 @>{D_0} >>E^1 @>{D_1} >> \cdots @>{D_{\ell-1} }>> E^{\ell}
  @>{D_{\ell}} >> \cdots
\end{CD}
\end{equation*}
where $E^{\ell}=\bigoplus_{j+k=\ell}C^{j,k}$ and where on each term $C^{j,k}$
in $E^{\ell}$, $D_{\ell}=c_{j,k}+(-1)^kd_{j,k}$. Using commutativity of $c$ and
$d$, one can easily check that $D_{\ell+1}D_{\ell}=0$, and thus the total
complex is a chain complex. We recall a couple of definitions from homological
algebra. If $E^{\ast}$ and $F^{\ast}$ are cochain complexes of vector spaces
with coboundary operators $e$ and $f$ respectively, then a chain map
$g\colon E^{\ast}\to F^{\ast}$ is a collection of linear maps $g_j\colon E^j\to F^j$
that
commute with $e$ and $f$. A chain map induces a map on cohomology. A cochain
complex $E^{\ast}$ is said to be exact at the $k$th term if the kernel of
$e_k\colon E_k\to E_{k+1}$ is equal to the image of $e_{k-1}\colon E_{k-1}\to
_k$. Thus
the cohomology at that term is zero. $E^{\ast}$ is defined to be exact if it
is exact at each term. A chain contraction $h\colon E^{\ast}\to E^{\ast}$ is a
family of linear maps $h_j\colon E^j\to E^{j-1}$ such that
$e_{j-1}h_j+h_{j+1}e_j=\text{Id}$. The existence of a chain contraction on
$E^{\ast}$ implies that $E^{\ast}$ is exact. The following fact from
homological algebra is fundamental in proving finite dimensionality of our
cohomology spaces.
\begin{lemma}
  Suppose that $C^{\ast,\ast}$ is a bicomplex as above, and $E^{\ast}$ is the
  associated total complex. Suppose that we augment the bicomplex with a
  column on the left which is a chain complex $C^{-1,\ast}$,

  \begin{equation*}
    \begin{CD}
      C^{-1,0} @>{d_{-1,0}} >>C^{-1,1} @>{d_{-1,1} }>> \cdots
      @>{d_{-1,\ell-1} }>> C^{-1,\ell} @>{d_{-1,\ell}} >> \cdots
    \end{CD}
  \end{equation*}
  and with linear maps $c_{-1,k}\colon C^{-1,k}\to C^{0,k}$, such that the
  augmented rows

  \begin{equation*}
    \begin{CD}
      0 @>>> C^{-1,k} @>{c_{-1,k}} >>C^{0,k} @>{c_{0,k}} >> \cdots
      @>{c_{\ell-1,k} }>> C^{\ell,k} @>{c_{\ell,k}} >> \cdots
    \end{CD}
  \end{equation*}
  are chain complexes with $d_{0,k}c_{-1,k}=c_{-1,k+1}d_{-1,k}$. Then, the
  maps $c_{-1,k}$ induce a chain map $c_{-1,\ast}\colon C^{-1,\ast}\to
  E^{\ast}$. Furthermore, if the first $K$ rows of the augmented complex are
  exact, then $c_{-1,\ast}$ induces an isomorphism on the homology of the
  complexes $c^{\ast}_{-1,\ast}\colon H^k(C^{-1,\ast})\to H^k(E^{\ast})$ for $k\leq
  K$ and an injection for $k=K+1$. In fact, one only needs exactness of the
  first $K$ rows up to the $K$th term $C^{K,j}$.
\end{lemma}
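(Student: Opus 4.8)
The plan is to prove this by the classical ``staircase'' diagram chase in the augmented bicomplex; no spectral sequences are needed, only exactness of the rows together with the defining (anti)commutation relations $c\,c=0$, $d\,d=0$, $d\,c=c\,d$. First I would record that $c_{-1,\ast}$ is a chain map: since $c_{-1,k}$ has image in the summand $C^{0,k}$ of $E^{k}$, and $D_{k}$ restricted to $C^{0,k}$ equals $c_{0,k}+(-1)^{k}d_{0,k}$, the row‑complex identity $c_{0,k}c_{-1,k}=0$ together with $d_{0,k}c_{-1,k}=c_{-1,k+1}d_{-1,k}$ gives $D_{k}c_{-1,k}=c_{-1,k+1}d_{-1,k}$, so $c_{-1,\ast}$ descends to $c^{\ast}_{-1,\ast}\colon H^{k}(C^{-1,\ast})\to H^{k}(E^{\ast})$.

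For surjectivity in degrees $k\le K$: given a cocycle $e=(e_{0},\dots,e_{k})\in E^{k}$ with $e_{j}\in C^{j,k-j}$, the component of $D_{k}e=0$ lying in $C^{k+1,0}$ reads $c_{k,0}(e_{k})=0$; exactness of the bottom row at $C^{k,0}$ provides $\eta\in C^{k-1,0}$ with $c_{k-1,0}(\eta)=e_{k}$, and subtracting $D_{k-1}$ of the element of $E^{k-1}$ supported in $C^{k-1,0}$ equal to $\eta$ kills $e_{k}$ at the expense of the $C^{k-1,1}$‑component. Iterating down the antidiagonal — at the $j$‑th stage invoking exactness of the row $C^{\ast,k-j}$ at its term $C^{j,k-j}$ — yields after $k$ steps a cohomologous cocycle $\tilde e$ supported in $C^{0,k}$, whose cocycle condition forces both $c_{0,k}(\tilde e_{0})=0$ and $d_{0,k}(\tilde e_{0})=0$. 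Exactness of the $k$‑th row at $C^{0,k}$ then gives $x\in C^{-1,k}$ with $c_{-1,k}(x)=\tilde e_{0}$; applying $c_{-1,k+1}$ to $d_{-1,k}(x)$ and using the commutation relation gives $c_{-1,k+1}(d_{-1,k}(x))=d_{0,k}(\tilde e_{0})=0$, and since $c_{-1,k+1}$ is injective (exactness of the next row at its leftmost term, available in the relevant range) we conclude $d_{-1,k}(x)=0$, so $[e]=c^{\ast}_{-1,\ast}[x]$.

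Injectivity in degrees $k\le K+1$ is dual: if $x\in C^{-1,k}$ is a cocycle with $c_{-1,k}(x)=D_{k-1}\eta$ for some $\eta\in E^{k-1}$, reading off the components of this equation shows $c_{k-1,0}(\eta_{k-1})=0$ and that the intermediate components of $D_{k-1}\eta$ vanish; an upward staircase — pushing $\eta$ toward the left column, again using row exactness — replaces $\eta$ by an element supported in $C^{0,k-1}$, necessarily of the form $c_{-1,k-1}(y)$, and injectivity of $c_{-1,k}$ then forces $x=\pm\,d_{-1,k-1}(y)$, i.e.\ $[x]=0$ in $H^{k}(C^{-1,\ast})$. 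I expect the only real difficulty to be bookkeeping: keeping the signs in $D_{\ell}=c+(-1)^{k}d$ consistent throughout the chase, and, more to the point, pinning down at each stage exactly which row and which term must be exact, so as to extract precisely the stated sharp range — isomorphism for $k\le K$ and injection for $k=K+1$ — using only exactness of the first $K$ rows up to the column $C^{K,\ast}$. Everything else is a routine, if lengthy, verification.
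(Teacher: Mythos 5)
Your staircase argument is exactly the one the paper has in mind: the paper does not write out a proof but cites Bott--Tu (pp.~95--97), whose ``generalized Mayer--Vietoris'' argument is precisely this antidiagonal collapse using row exactness, and your surjectivity/injectivity chases (including the use of injectivity of $c_{-1,k+1}$ to see that the preimage in $C^{-1,k}$ is a cocycle) are correct. The only caveats are the bookkeeping you already flag yourself --- in particular the paper's sign $(-1)^k$ in $D_\ell$ should really be read as depending on the horizontal index so that $D^2=0$ --- so there is nothing substantive to add.
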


A simple proof of this is given in Bott and Tu \cite[pages 95--97]{BottTu}, in
the
case of the \v Cech-de Rham complex, but the the proof generalizes to the
abstract setting. Of course, if we augmented the bicomplex with a row
$C^{\ast,-1}$ with the same properties, the conclusions would hold. In fact,
we will show the cohomologies of two chain complexes are isomorphic by
augmenting a bicomplex as above with one such row and one such column.

\begin{corollary}
  Suppose that $C^{\ast,\ast}$ is a bicomplex as in the Lemma, and that
  $C^{\ast,\ast}$ is augmented with a column $C^{-1,\ast}$ as in the Lemma,
  and with a row $C^{\ast,-1}$ that is also a chain complex with coboundary operators
  $c_{j,-1}\colon C^{j,-1}\to C^{j+1,-1}$ and linear maps $d_{j,-1}\colon C^{j,-1}\to
  C^{j,0}$ such that the augmented columns
$$
\begin{CD}
  0 @>>> C^{j,-1} @>{d_{j,-1}} >>C^{0,k} @>{d_{j,-1}} >> \cdots
  @>{d_{j,\ell-1} }>> C^{j,\ell} @>{d_{j,\ell}} >> \cdots
\end{CD}
$$
are chain complexes, and $c_{j,0}d_{j,-1}=d_{j+1,-1}c_{j,-1}$. Then, if the
first $K$ rows are exact and the first $K+1$ columns are exact, up to the $K+1$
term, it follows that the cohomology $H^{\ell}(C^{-1,\ast})$ of $C^{-1,\ast}$
and $H^{\ell}(C^{\ast,-1})$ of $C^{\ast,-1}$ are isomorphic for $0\leq K$,
and $H^{K+1}(C^{-1,\ast})$ is isomorphic to a subspace of
$H^{K+1}(C^{\ast,-1})$.
\end{corollary}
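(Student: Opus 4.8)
The plan is to obtain the Corollary from the Lemma by applying it twice — once to the left column $C^{-1,\ast}$ and once, in transposed form, to the bottom row $C^{\ast,-1}$ — and then to splice the two comparison maps through the cohomology $H^{\ast}(E^{\ast})$ of the total complex of $C^{\ast,\ast}$.

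First I would apply the Lemma verbatim to the augmented column. The maps $c_{-1,k}\colon C^{-1,k}\to C^{0,k}$, together with the compatibility $d_{0,k}c_{-1,k}=c_{-1,k+1}d_{-1,k}$, are exactly the data the Lemma requires, so they assemble into a chain map $c_{-1,\ast}\colon C^{-1,\ast}\to E^{\ast}$. Since the first $K$ rows of the bicomplex are exact up to the $K$th term, the Lemma tells us that $c_{-1,\ast}$ induces an isomorphism $H^{\ell}(C^{-1,\ast})\to H^{\ell}(E^{\ast})$ for $\ell\le K$ and an injection for $\ell=K+1$.

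Next I would invoke the transposed form of the Lemma — the text notes it holds when one augments with a row $C^{\ast,-1}$ having the analogous properties — applied to the bottom row. Interchanging the roles of rows and columns (and of $c$ and $d$), the hypothesis that the first $K+1$ columns are exact up to the $(K+1)$th term is precisely the exactness needed, and it produces a chain map $d_{\ast,-1}\colon C^{\ast,-1}\to E^{\ast}$ inducing an isomorphism $H^{\ell}(C^{\ast,-1})\to H^{\ell}(E^{\ast})$ for all $\ell\le K+1$. The one routine point here is that the total complex of the transposed bicomplex is canonically isomorphic to $E^{\ast}$, via a suitable sign $\pm1$ on each summand $C^{j,k}$ intertwining $D_{\ell}=c_{j,k}+(-1)^{k}d_{j,k}$ with the transposed differential; alternatively one simply reruns the argument of the Lemma from \cite{BottTu} with columns playing the role of rows.

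Finally I would compose. For $0\le\ell\le K$ both induced maps are isomorphisms onto $H^{\ell}(E^{\ast})$, so $H^{\ell}(C^{-1,\ast})\xrightarrow{\ \sim\ }H^{\ell}(E^{\ast})\xleftarrow{\ \sim\ }H^{\ell}(C^{\ast,-1})$ yields $H^{\ell}(C^{-1,\ast})\cong H^{\ell}(C^{\ast,-1})$; for $\ell=K+1$ the first map is injective while the second is an isomorphism, so $H^{K+1}(C^{-1,\ast})$ embeds as a subspace of $H^{K+1}(C^{\ast,-1})$. The argument as a whole is bookkeeping; the only thing genuinely requiring attention is the degree accounting — the row hypothesis is needed only up to the $K$th term whereas the column hypothesis is needed up to the $(K+1)$th — which is exactly why the comparison is an isomorphism only through degree $K$ and degenerates to a mere injection in degree $K+1$.
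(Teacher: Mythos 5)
Your proposal is correct and follows essentially the same route as the paper: both sides are compared to the cohomology of the total complex via the Lemma (and its transposed form for the bottom row), giving isomorphisms through degree $K$ and an injection in degree $K+1$. Your extra remark about the sign identification of the transposed total complex is a detail the paper leaves implicit but does not change the argument.
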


\begin{proof}
  This follows immediately from the lemma, as the cohomology up to order $K$
  of both $C^{-1,\ast}$ and $C^{\ast,-1}$ are isomorphic to the cohomology of
  the total complex. Also, $H^{K+1}(C^{-1,\ast})$ is isomorphic to a subspace
  of $H^{K+1}(E^{\ast})$ which is isomorphic to $H^{K+1}(C^{\ast,-1})$.
\end{proof}

\begin{remark}
  If all of the spaces $C^{j,k}$ in the Lemma and Corollary are Banach spaces,
  and the coboundaries, $c_{j,k}$ and $d_{j,k}$ are bounded, then the
  isomorphisms of cohomology can be shown to be topological isomorphisms,
  where the topologies on the cohomology spaces are induced by the quotient
  semi-norms.
\end{remark}

Let $\{V_i, i\in S\}$ be a finite covering of $X$ by Borel sets (usually taken
to be balls). We construct the corresponding \v Cech-$L^p$-Alexander bicomplex
at scale $\alpha$ as follows.

\begin{equation*}
C^{k,\ell}=\bigoplus_{I\in S^{k+1}}L^p_a(U^{\ell+1}_{\alpha}\cap
V^{\ell+1}_I)\ \ \text{for}\ k,\ell\geq 0
\end{equation*}
where we use the abbreviation $V_I=V_{i_0,\dots,i_k}=\bigcap_{j=0}^k
V_{i_j}$. The vertical coboundary $d_{k,\ell}$ is just the usual coboundary
$\delta_{\ell}$ as in Section \ref{Section4}, acting on each
$L^p_a(U^{\ell+1}_{\alpha}\cap
V_{I^{\ell+1}})$. The horizontal coboundary $c_{k,\ell}$ is the ``\v Cech
differential''. More explicitly, if $f\in C^{k,\ell}$, then it has components $f_I$
which are functions on $U^{\ell+1}_{\alpha}\cap V^{\ell+1}_I$ for each $(k+1)$-tuple $I$, and for any $k+2$ tuple $J=(j_0,\dots,j_{k+1})$, $cf$ is defined on
$U^{\ell+1}_{\alpha}\cap V^{\ell+1}_J$ by
$$
(c_{k,\ell}f)_J=\sum_{i=0}^{k+1}(-1)^if_{j_0,\dots,\hat j_i,\dots,j_{k+1}}\ \
\text{restricted to}\ \ V^{\ell+1}_J.
$$
It is not hard to check that the coboundaries commute $c\delta=\delta c$. We
augment the complex on the left with the column (chain complex)
$C^{-1,\ell}=L^p_a(U^{\ell+1}_{\alpha})$ with horizontal map $c_{-1,\ell}$
equal to restriction on each $V_i$ and vertical map the usual coboundary. We
augment the complex on the bottom with the chain complex $C^{\ast,-1}$ which
is the \v Cech complex of the cover $\{V_i\}$. That is an element $f\in C^{k,-1}$
is a function that assigns to each $V_I$ a real number or equivalently
$C^{k,-1}=\bigoplus_{I\in S^{k+1}}\mathbb{R} V_I$. The vertical maps are just
inclusions into $C^{\ast,0}$, and the horizontal maps are the \v Cech
differential as defined above.
\begin{remark}
  We can similarly define the \v Cech-Alexander bicomplex, the \v Cech-Continuous
  Alexander bicomplex and the \v Cech-Smooth Alexander bicomplex (in case $X$ is
  a smooth Riemannian manifold) by replacing $L^p_a$ everywhere in the above
  complex with $F_a$, $C_a$ and $C^{\infty}_a$ respectively.
\end{remark}
\begin{remark}
   The cohomology spaces of $C^{\ast,-1}$ are finite
  dimensional since the cover $\{V_i\}$ is finite. This is called the \v Cech
  cohomology of the cover, and is the same as the simplicial cohomology of the
  simplicial complex that is the nerve of the cover $\{V_i\}$.
\end{remark}
We will use the above complex to show, under some conditions, that
$H^{\ell}_{\alpha,L^p}$, $H^{\ell}_{\alpha}$ and $H^{\ell}_{\alpha,cont}$ are
isomorphic to the \v Cech cohomology of an appropriate finite open cover of $X$
and thus finite dimensional.

\begin{theorem}
  \label{Theorem9.1} Let $\{V_i\}_{i\in S}$ be a finite cover of $X$ by
  Borel sets as above, and assume that $\{V^{K+1}_i\}_{i\in S}$ is a cover for
  $U^{K+1}_{\alpha}$ for some $K\geq 0$. Assume also that the first $K+1$
  columns of the corresponding \v Cech-$L^p$-Alexander complex are exact up to
  the $K+1$ term. Then $H^{\ell}_{\alpha,L^p}$ is isomorphic to
  $H^{\ell}(C^{\ast,-1})$ for $\ell\leq K$ and is thus finite
  dimensional. Also $H^{K+1}_{\alpha,L^p}$ is isomorphic to a subspace of
  $H^{K+1}(C^{\ast,-1})$. If $\{V_i\}_{i\in S}$ is an open cover, then the
  same conclusion holds for $H^{\ell}_{\alpha}$, $H^{\ell}_{\alpha,cont}$ and
  $H^{\ell}_{\alpha,smooth}$ (in case $X$ is a smooth Riemannian manifold),
  and hence all are isomorphic to each other. Those isomorphisms are induced
  by the natural inclusion maps of smooth functions into continuous functions
  into $L^q$-functions into $L^p$-functions ($q\ge p$) into arbitrary real
  valued functions.
\end{theorem}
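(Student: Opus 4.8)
The plan is to deduce the theorem directly from the Corollary above, applied to the \v Cech--$L^p$--Alexander bicomplex $C^{\ast,\ast}$ augmented on the left by the column $C^{-1,\ell}=L^p_a(U^{\ell+1}_\alpha)$ (with vertical map $\delta$ and horizontal maps the restrictions to the $V_i$) and on the bottom by the \v Cech complex $C^{\ast,-1}=\bigoplus_{I}\mathbb{R}V_I$ of the cover. By construction $H^\ell(C^{-1,\ast})=H^\ell_{\alpha,L^p}$, while $H^\ell(C^{\ast,-1})$ is the simplicial cohomology of the nerve of $\{V_i\}_{i\in S}$, which is finite dimensional because the cover is finite. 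Exactness of the first $K+1$ columns up to the $(K+1)$st term is assumed outright, so the only hypothesis of the Corollary left to verify is exactness of the augmented rows.

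First I would check that $\{V_i^{\ell+1}\}_{i\in S}$ covers $U^{\ell+1}_\alpha$ for every $\ell\le K$, not merely for $\ell=K$: given $(x_0,\dots,x_\ell)\in U^{\ell+1}_\alpha$ with witness $t$ (so $d(x_j,t)\le\alpha$ for all $j$), the padded point $(x_0,\dots,x_\ell,t,\dots,t)\in X^{K+1}$ lies in $U^{K+1}_\alpha$, hence in some $V_i^{K+1}$, and then already $(x_0,\dots,x_\ell)\in V_i^{\ell+1}$. Granting this, the $\ell$th augmented row
$$0\to L^p_a(U^{\ell+1}_\alpha)\to\bigoplus_{i\in S}L^p_a(U^{\ell+1}_\alpha\cap V_i^{\ell+1})\to\bigoplus_{I\in S^2}L^p_a(U^{\ell+1}_\alpha\cap V_I^{\ell+1})\to\cdots$$
is the generalized Mayer--Vietoris sequence for the finite cover $\{U^{\ell+1}_\alpha\cap V_i^{\ell+1}\}$ of $U^{\ell+1}_\alpha$, and I would prove its exactness by the usual contracting homotopy built from a partition of unity subordinate to that cover: for $L^p_a$ (and equally for the space $F_a$ of arbitrary alternating functions) one takes the partition by characteristic functions of a measurable disjointification of the cover, so no regularity of the $V_i$ is needed and injectivity of the restriction at the left end is just the a.e.\ statement; for $C_a$ and, on a smooth manifold, $C^\infty_a$, one instead uses a continuous, resp.\ smooth, partition of unity, which exists precisely because then $\{V_i^{\ell+1}\}$ is an \emph{open} cover of (a neighbourhood of) the compact set $U^{\ell+1}_\alpha$ --- this is exactly where the openness hypothesis is used. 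Since the \v Cech and Alexander differentials act on disjoint blocks of variables, this homotopy visibly preserves the alternating property.

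With rows $0$ through $K$ exact (more than the ``first $K$ rows'' the Corollary requires) and the first $K+1$ columns exact up to the $(K+1)$st term, the Corollary yields the claimed isomorphism $H^\ell_{\alpha,L^p}\cong H^\ell(C^{\ast,-1})$ for $\ell\le K$ and the injection $H^{K+1}_{\alpha,L^p}\hookrightarrow H^{K+1}(C^{\ast,-1})$, and finite dimensionality follows from that of the nerve. The identical argument applied to the \v Cech--Alexander, \v Cech--Continuous--Alexander and \v Cech--Smooth--Alexander bicomplexes (whose rows are exact by the partitions of unity just described, and whose columns are exact under the corresponding hypothesis --- which in the situations of interest is verified simultaneously for all four function classes, the contracting homotopies in the Alexander direction being given by the explicit, regularity-preserving formulas as in the proof of Lemma \ref{Lemma2}) gives the same conclusion for $H^\ell_{\alpha}$, $H^\ell_{\alpha,cont}$ and $H^\ell_{\alpha,smooth}$; and because the four bicomplexes are related by the inclusion chain $C^\infty_a\hookrightarrow C_a\hookrightarrow L^q_a\hookrightarrow L^p_a\hookrightarrow F_a$, which commutes with all horizontal and vertical differentials and with both augmentations, these identifications are all compatible, so the various cohomology groups are naturally isomorphic via the inclusion maps, as claimed. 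The main obstacle is precisely the row exactness: one must keep the contracting homotopy inside the chosen function class (which is what forces the open-cover hypothesis in the continuous and smooth cases), and one must make sure the covering property of $\{V_i^{\ell+1}\}$ genuinely propagates down from level $K+1$ to all lower levels; everything else is a direct appeal to the Corollary and to the finiteness of the nerve.
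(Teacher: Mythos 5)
Your proposal is correct and follows essentially the same route as the paper's proof: reduce everything to exactness of the augmented rows via the Corollary, establish that exactness with a partition-of-unity chain contraction (measurable for $L^p_a$ and $F_a$, continuous or smooth for $C_a$ and $C^\infty_a$, which is where the open-cover hypothesis enters), and then get the comparison isomorphisms from naturality of the inclusions of bicomplexes over the common \v Cech row. Your explicit padding argument showing that $\{V_i^{\ell+1}\}$ covers $U^{\ell+1}_\alpha$ for all $\ell\le K$ is a welcome detail the paper only asserts.
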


\begin{proof}
  In light of the corollary above, it suffices to show that the first $K$ rows
  of the bicomplex are exact. Indeed, we are computing the sheaf cohomology
  of $U_\alpha^{k+1}$ for a flabby sheaf (the sheaf of smooth or continuous or
  $L^p$ or
  arbitrary functions) which vanishes. We write out the details:  Note that for $\ell\leq K$, $\{V^{\ell+1}_i\}$
  covers $U^{\ell+1}_{\alpha}$ and therefore
  $c_{-1,\ell}\colon L^p_a(U^{\ell+1}_{\alpha})\to \bigoplus_{i\in S}
  L^p_a(U^{\ell+1}_{\alpha}\cap V^{\ell+1}_i)$ is injective (as $c_{-1,\ell}$
  is restriction), and therefore we have exactness at the first term. In
  general, we construct a chain contraction $h$ on the $\ell$th row. Let
  $\{\phi_i\}$ be a measurable partition of unity for $U^{\ell+1}_{\alpha}$
  subordinate to the cover $\{U^{\ell+1}_{\alpha}\cap V^{\ell+1}_i\}$ (thus
  $\operatorname{support}\phi_i\subset U^{\ell+1}_{\alpha}\cap V^{\ell+1}_i$ and
  $\sum_i\phi_i(x)=1$ for all $x$). Then define
$$h\colon \bigoplus_{I\in S^{k+1}} L^p_a(U^{\ell+1}_{\alpha}\cap V^{\ell+1}_I) \to \bigoplus_{I\in S^{k}} L^p_a(U^{\ell+1}_{\alpha}\cap V^{\ell+1}_I)$$
for each $k$ by $(hf)_{i_0,\dots,i_{k-1}}=\sum_{j\in S}\phi_j
f_{j,i_0,\dots,i_{k-1}}$. We show that $h$ is a chain contraction, that is
$ch+hc=\text{Id}$:
$$
(c(hf))_{i_0,\dots,i_k-1}=\sum_{n=0}^{k-1}(-1)^n(h f)_{i_0,\dots,\hat
  i_n,\dots,i_{k-1}}=\sum_{j,n}(-1)^n\phi_j f_{j,i_0,\dots,\hat
  i_n,\dots,i_{k-1}}.
$$
Now,

\begin{align*}
 (h(cf))_{i_0,\dots,i_k-1}&=\sum_{j\in S}\phi_j(cf)_{j,i_0,\dots,i_{k-1}}\\
&=\sum_j\phi_j(f_{i_0,\dots,i_{k-1}}-\sum_n^{k-1} (-1)^n f_{j,i_0,\dots,\hat i_n,\dots,i_{k-1}})\\
&=f_{i_0,\dots,i_{k-1}}-(c(hf))_{i_0,\dots,i_{k-1}}.
\end{align*}

Thus $h$ is a chain contraction for the $\ell$th row, proving exactness (note
that exactness follows, since if $cf=0$ then from above $c(hf)=f$). If
$\{V_i\}$ is an open cover, then the partition of unity $\{\phi_i\}$ can be
chosen to be continuous, or even smooth in case $X$ is a smooth Riemannian
manifold. Then $h$ as defined above is a chain contraction on the
corresponding complexes with $L^p_a$ replaced by $F_a$, $C_a$ or
$C^{\infty}_a$.

Observe that the inclusions $C^\infty\hookrightarrow C^0\hookrightarrow
L^q\hookrightarrow L^p\hookrightarrow F$
(where $F$ stands for arbitrary real valued functions) extend to inclusions of
the augmented bicomplexes, whose restriction to the \v Cech column $C^{*,-1}$ is
the identity. As the identity clearly induces an isomorphism in cohomology,
and the inclusion of this augmented bottom row into the (non-augmented)
bicomplex also does, by naturality the various inclusions of the bicomplexes
induce isomorphisms in cohomology. The same argument applied backwards to the
inclusions of the Alexander-Spanier-\v Cech rows into the bicomplexes shows that
the inclusions of the smaller function spaces into the larger function spaces
induce isomorphisms in $\alpha$-cohomology.

This finishes the proof of the theorem.
\end{proof}

We can use Theorem \ref{Theorem9.1} to prove finite dimensionality of the
cohomologies in general, for $\ell=0$ and $1$.

\begin{theorem}
  \label{Theorem9.2} For any compact $X$ and any $\alpha>0$,
  $H^{\ell}_{\alpha,L^p}$, $H^{\ell}_{\alpha}$, $H^{\ell}_{\alpha,cont}$, and
  $H^{\ell}_{\alpha,smooth}$ ($X$ a smooth manifold) are finite dimensional
  and are isomorphic, for $\ell=0,1$.
\end{theorem}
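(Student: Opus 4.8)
The plan is to apply Theorem~\ref{Theorem9.1} with a carefully chosen finite open cover of $X$, using the freedom that for $\ell=0,1$ we only need exactness of the first $K+1 \le 2$ columns of the \v Cech--Alexander bicomplex up to the appropriate term. Concretely, I would take $K=1$ and seek a finite open cover $\{V_i\}_{i\in S}$ of $X$ by (small) balls such that: (a) $\{V_i^2\}_{i\in S}$ covers $U^2_\alpha$ and $\{V_i^3\}_{i\in S}$ covers $U^3_\alpha$ — this is automatic once the balls $V_i$ have radius at least, say, $\alpha$ (or a fixed multiple of $\alpha$), since a point of $U^{\ell+1}_\alpha$ has all its coordinates within $\alpha$ of a common center $t$, and by compactness finitely many such balls cover $X$ while the enlarged balls absorb the whole $\alpha$-neighborhood; and (b) the first two columns of the bicomplex are exact up to the second term. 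Granting (a) and (b), Theorem~\ref{Theorem9.1} immediately gives that $H^0_{\alpha,L^p}$, $H^0_{\alpha}$, $H^0_{\alpha,cont}$ (and $H^0_{\alpha,smooth}$ when $X$ is a manifold) are all isomorphic to $H^0(C^{\ast,-1})$, the \v Cech $H^0$ of the finite cover — hence finite dimensional and mutually isomorphic — and likewise $H^1_{\alpha,\bullet}$ embeds in the finite-dimensional $H^1(C^{\ast,-1})$ while all four variants are isomorphic to one another.

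The heart of the matter is verifying column exactness (b), i.e. that for each fixed multi-index $I$ the vertical complex
\[
0 \to C^{j,-1}_I \to L^p_a(U^1_\alpha\cap V_I) \xrightarrow{\ \delta_0\ } L^p_a(U^2_\alpha\cap V_I^2) \xrightarrow{\ \delta_1\ } L^p_a(U^3_\alpha\cap V_I^3)
\]
is exact at the first and second spots (exactness ``up to the $K+1=2$ term'' means through $C^{j,2}$). Exactness at $C^{j,-1}_I\to C^{j,0}_I$ and at $C^{j,0}_I$ is the statement that a function $f$ on $U^1_\alpha\cap V_I = V_I$ (or more precisely on the relevant piece) with $\delta_0 f = 0$ is constant on connected pieces, and that a $1$-cochain $g$ on $U^2_\alpha\cap V_I^2$ with $\delta_1 g = 0$ is a coboundary $\delta_0 f$. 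The key geometric point is to shrink the cover so that each $V_I$ — being an intersection of small balls — is such that $U^2_\alpha\cap V_I^2$ is ``connected enough'': if $V_I$ has small enough diameter, any two points of $V_I$ are joined by a pair lying in $U^2_\alpha$, so a closed $0$-cochain is forced to be constant, and a closed $1$-cochain $g(x,y)$ can be integrated by fixing a basepoint $x_0\in V_I$ and setting $f(x) = g(x_0,x)$; the cocycle condition $g(x,y) - g(x,z) + g(y,z) = 0$ on $U^3_\alpha\cap V_I^3$ then gives $\delta_0 f = g$ on $U^2_\alpha\cap V_I^2$, using that $(x_0,x,y)\in U^3_\alpha\cap V_I^3$ whenever $(x,y)\in U^2_\alpha\cap V_I^2$ and $\operatorname{diam}(V_I)$ is small. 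This works verbatim for continuous, smooth, $L^p$, or arbitrary functions since the formula $f(x)=g(x_0,x)$ inherits the regularity of $g$ (and boundedness/measurability issues are handled exactly as in Section~\ref{Section4}); one minor subtlety in the $L^p$ case is to choose the basepoint $x_0$ to be a Lebesgue point, or equivalently to average $g(\cdot\,,x)$ over $x_0$ in a small ball, which is the standard dodge.

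The step I expect to be the main obstacle is pinning down the precise smallness condition on the cover that simultaneously guarantees (a) the enlarged cubes $V_i^{\ell+1}$ cover $U^{\ell+1}_\alpha$ for $\ell=0,1$ and (b) each intersection $V_I$ is small enough that the basepoint/integration argument closes up inside $U^2_\alpha\cap V_I^2$ and $U^3_\alpha\cap V_I^3$. The tension is that (a) wants the balls $V_i$ to be \emph{large} relative to $\alpha$ (so the products absorb the $\alpha$-neighborhood of the diagonal), while (b) wants the nonempty intersections $V_I$ to be \emph{small}. The resolution is that one does not need every $V_i$ small, only that whenever $V_I=\bigcap V_{i_j}$ is nonempty it has controlled diameter; choosing the $V_i$ to be balls of radius comparable to $\alpha$ about the points of a fine $\alpha$-net, any nonempty pairwise or triple intersection has diameter $O(\alpha)$, which is exactly the regime in which ``$(x,y)\in U^2_\alpha$ for all $x,y$'' holds on $V_I$ (indeed $V_I\subseteq B_{c\alpha}(p)$ forces $d(x,y)\le 2c\alpha$, and for $c$ small enough this is $\le\alpha$, so $V_I^2\subseteq U^2_\alpha$ and similarly $V_I^3\subseteq U^3_\alpha$, trivializing the slice condition). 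With this choice the column complexes restricted to each $V_I$ become the ordinary (un-scaled, hence Alexander--Spanier-trivial) complexes on a set on which ``every tuple is admissible,'' so exactness follows from Lemma~\ref{Lemma2}-type reasoning, and Theorem~\ref{Theorem9.1} delivers the conclusion.
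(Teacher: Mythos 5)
Your overall strategy --- feed a suitable finite cover of balls into Theorem~\ref{Theorem9.1} and prove column exactness by a basepoint/averaging chain contraction --- is the right one, and your contractions (evaluate at $p\in V_I$, or average over $V_I$ in the $L^p$ case) are exactly the ones that work. But the instantiation with $K=1$ contains a genuine gap, and you have in fact put your finger on it yourself: the ``tension'' you describe is not resolvable. For $\{V_i^2\}_{i\in S}$ to cover $U^2_\alpha$, each witness point $t$ of a pair $(x_0,x_1)\in U^2_\alpha$ must lie deep enough inside some $V_i$ that the whole $\alpha$-ball around $t$ is contained in $V_i$; this forces the $V_i$ to be balls of radius strictly greater than $\alpha$. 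On the other hand, your column-exactness argument needs \emph{every} nonempty $V_I$ to satisfy $V_I^{\ell+1}\subseteq U^{\ell+1}_\alpha$ (so that the identity $(x_0,x,y)\in U^3_\alpha\cap V_I^3$ holds for all relevant tuples), which requires $V_I$ to sit inside a ball of radius about $\alpha/2$. These demands are mutually exclusive, because a single ball $V_i$ is itself one of the intersections $V_I$ (take $I=(i,\dots,i)$), so ``any nonempty intersection has diameter $O(\alpha)$ with small constant'' cannot hold once the balls have radius $>\alpha$. In the regime of large $V_I$ the condition you need --- that some $x_0\in V_I$ lies in $S_{(x,y)}$ for every $(x,y)\in U^2_\alpha\cap V_I^2$ --- is precisely Hypothesis $(\ast)$ of Section~\ref{Section9}; it is \emph{not} automatic, it is the content of Theorems~\ref{Theorem9.4} and~\ref{Theorem9.5}, it requires genuine geometric input (witness-set continuity, curvature bounds), and it fails for general compact metric spaces, as the appendix examples illustrate.

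The paper escapes this by taking $K=0$ rather than $K=1$. With a cover by open balls of radius $\alpha/3$, only $\{V_i\}$ covering $X=U^1_\alpha$ is needed (so only row $\ell=0$ must be exact, which is the partition-of-unity argument from Theorem~\ref{Theorem9.1}), while every nonempty $V_I$ is contained in a single ball of radius $\alpha/3$, whence $V_I^{\ell+1}\subseteq U^{\ell+1}_\alpha$ trivially for \emph{all} $\ell$ and the columns are fully contractible by exactly your contractions. Theorem~\ref{Theorem9.1} with $K=0$ then gives $H^0_{\alpha,\bullet}\cong H^0(C^{\ast,-1})$ and, crucially, the finite-dimensionality of $H^1_{\alpha,\bullet}$ comes for free from the clause that $H^{K+1}_{\alpha,\bullet}$ \emph{injects} into the finite-dimensional $H^{K+1}(C^{\ast,-1})$ --- you do not need row $\ell=1$ to be exact, and hence you do not need $\{V_i^2\}$ to cover $U^2_\alpha$ at all. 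If you replace your $K=1$ setup by this $K=0$ setup, the rest of your argument goes through essentially verbatim.
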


Let $\{V_i\}$ be a covering of $X$ by open balls of radius $\alpha/3$. Then
the first row ($\ell=0$) of the \v Cech-$L^p$-Alexander Complex is exact from the
proof of Theorem \ref{Theorem9.1} (taking $K=0$). It suffices to show that the columns are
exact. Note that $V^{\ell+1}_I\subset U^{\ell+1}_{\alpha}$ trivially, for each
$\ell$ and $I\in S^{k+1}$ because $\diam(V_I)<\alpha$. For $k$ fixed, and
$I\in S^{k+1}$ we define $g\colon L^p_a(V^{\ell+1}_I)\to L^p_a(V^{\ell}_I)$ by
$$
gf(x_0,\dots,x_{\ell-1})=\frac{1}{\mu(V_I)}\int_{V_I}f(t,x_0,\dots,x_{\ell-1})\,d\mu(t).
$$
We check that $g$ defines a chain contraction:
\begin{align*}
\delta(gf)(x_0,\dots,x_{\ell})&=\sum_i(-1)^i(gf)(x_0,\dots,\hat x_i,\dots,x_{\ell})\\
&=\sum_i(-1)^i\frac{1}{\mu(V_I)}\int_{V_I}f(t,x_0,\dots,\hat
x_i,\dots,x_{\ell})\,d\mu(t).
\end{align*}
But,
\begin{align*}
g&(\delta f)(x_0,\dots,x_{\ell})=\frac{1}{\mu(V_I)}\int_{V_I }\delta f(t,x_0,\dots,x_{\ell})\, d\mu(t)\\
&=\frac{1}{\mu(V_I)}\left(\int_{V_I}f((x_0,\dots,x_{\ell})\, d\mu(t)-\sum_i(-1)^i\int_{V_I}f(t,\dots,\hat x_i,\dots,x_{\ell})\,d\mu)t)\right)\\
&=f(x_0,\dots,x_{\ell})-\delta(gf)(x_0,\dots,x_{\ell}).
\end{align*}

Thus $g$ defines a chain contraction on the $k$th column and the columns are
exact. For the corresponding Alexander, continuous and smooth bicomplexes, a
chain contraction can be defined by fixing for each $V_I$, $I\in S^{k+1}$ a
point $p\in V_I$ and setting
$gf(x_0,\dots,x_{\ell-1})=f(p,x_0,\dots,x_{\ell-1})$. This is easily verified
to be a chain contraction, finishing the proof of the theorem.

Recall that for $x=(x_0,\dots,x_{\ell-1})\in U^{\ell}_{\alpha}$ we define the
slice $S_x=\{t\in X: (t,x_0,\dots,x_{\ell-1})\in U^{\ell+1}_{\alpha}\}$. We
consider the following hypothesis on $X$, $\alpha>0$ and non-negative integer
$K$:

\begin{definition}
  \textbf{\boldmath Hypothesis $(\ast)$}: There exists a $\delta>0$ such that whenever
  $V=\cap_i V_i$ is a non-empty intersection of finitely many open balls of
  radius $\alpha+\delta$, then there is a Borel set $W$ of positive measure
  such that for each $\ell\leq K+1$
$$
W\subset V\cap\left(\bigcap_{x\in U^{\ell}_{\alpha}\cap V^{\ell}}S_x\right).
$$
\end{definition}

\begin{theorem}
  \label{Theorem9.3} Assume that $X$, $\alpha>0$ and $K$ satisfy
  hypothesis $(\ast)$. Then, for $\ell\leq K$, $H^{\ell}_{\alpha,L^p}$,
  $H^{\ell}_{\alpha}$, $H^{\ell}_{\alpha,cont}$, and
  $H^{\ell}_{\alpha,smooth}$ (in the case $X$ is a smooth Riemannian manifold)
  are all finite dimensional, and are isomorphic to the \v Cech cohomology of
  some finite covering of $X$ by open balls of radius
  $\alpha+\delta$. Furthermore, the Hodge theorem for $X$ at scale $\alpha$
  holds (Theorem \ref{Theorem3} of Section \ref{Section4}).
\end{theorem}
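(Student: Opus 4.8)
The plan is to reduce everything to Theorem \ref{Theorem9.1}, so the real work is to produce a finite cover of $X$ to which that theorem applies. Let $\delta>0$ be the constant furnished by hypothesis $(\ast)$. By compactness of $X$, choose finitely many points $p_1,\dots,p_N$ so that the balls $B_\delta(p_i)$ cover $X$, and put $V_i=B_{\alpha+\delta}(p_i)$, $S=\{1,\dots,N\}$. First I would verify that $\{V_i^{\ell+1}\}_{i\in S}$ covers $U^{\ell+1}_\alpha$ for every $\ell$ (in particular in the range relevant to $K$): given $(x_0,\dots,x_\ell)\in U^{\ell+1}_\alpha$, pick $t$ with $d(x_j,t)\le\alpha$ for all $j$ and then $i$ with $d(t,p_i)<\delta$; the triangle inequality gives $d(x_j,p_i)<\alpha+\delta$, so the tuple lies in $V_i^{\ell+1}$. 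This is exactly what the proof of Theorem \ref{Theorem9.1} needs in order to run the partition-of-unity argument producing the chain contractions that make the rows of the \v Cech--$L^p$--Alexander bicomplex built on $\{V_i\}$ exact (and likewise for its continuous, smooth and arbitrary-function variants).

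The substance of the proof is the exactness of the columns of that bicomplex in the range demanded by Theorem \ref{Theorem9.1}, and this is precisely where hypothesis $(\ast)$ is used. Fix $k$ and a multi-index $I\in S^{k+1}$ in the relevant range; if $V_I:=\bigcap_j V_{i_j}=\emptyset$ the $I$-summand of the column vanishes, so assume $V_I\neq\emptyset$. Then $V_I$ is a non-empty intersection of finitely many balls of radius $\alpha+\delta$, so $(\ast)$ supplies a Borel set $W\subseteq V_I$ with $\mu(W)>0$ and $W\subseteq S_x$ for every $x\in U^\ell_\alpha\cap V_I^\ell$ and every $\ell\le K+1$. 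For $f\in L^p_a(U^{\ell+1}_\alpha\cap V_I^{\ell+1})$ I would set
\[
  (gf)(x_0,\dots,x_{\ell-1})=\frac{1}{\mu(W)}\int_W f(t,x_0,\dots,x_{\ell-1})\,d\mu(t).
\]
The choice of $W$ guarantees that $(t,x_0,\dots,x_{\ell-1})\in U^{\ell+1}_\alpha\cap V_I^{\ell+1}$ whenever $t\in W$ and $(x_0,\dots,x_{\ell-1})\in U^\ell_\alpha\cap V_I^\ell$ (this is exactly where the slightly-larger radius $\alpha+\delta$, as opposed to the small balls of Theorem \ref{Theorem9.2}, forces us to average over $W$ rather than all of $V_I$); Jensen's inequality together with Fubini gives $\|gf\|_{L^p}\le\|f\|_{L^p}$, and $gf$ inherits the alternating property, so $g$ maps $L^p_a(U^{\ell+1}_\alpha\cap V_I^{\ell+1})$ into $L^p_a(U^\ell_\alpha\cap V_I^\ell)$. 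The identity $\delta g+g\delta=\mathrm{Id}$ is then verified by the same computation as for the columns in the proof of Theorem \ref{Theorem9.2}, with $W$ in the role of the averaging domain, so $g$ is a chain contraction and the column is exact in the required range. For the $F_a$, $C_a$ and (when $X$ is a smooth manifold) $C^\infty_a$ variants one instead picks a point $p\in W$ (possible since $\mu(W)>0$) and sets $(gf)(x_0,\dots,x_{\ell-1})=f(p,x_0,\dots,x_{\ell-1})$, which preserves continuity and smoothness and is a chain contraction by the identical computation.

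With rows and columns exact, Theorem \ref{Theorem9.1} yields, for $\ell\le K$, a chain of isomorphisms $H^\ell_{\alpha,L^p}\cong H^\ell_{\alpha,cont}\cong H^\ell_\alpha\cong H^\ell_{\alpha,smooth}$ ($X$ a smooth manifold), all identified via the natural inclusions with $H^\ell(C^{\ast,-1})$, the \v Cech cohomology of the finite open cover $\{V_i\}$ by balls of radius $\alpha+\delta$; in particular all are finite dimensional, and $H^{K+1}_{\alpha,L^p}$ embeds into $H^{K+1}(C^{\ast,-1})$. For the final assertion I would appeal to Theorem \ref{Theorem3}: once the $L^2$-cohomology groups $\Ker\delta_{\ell,\alpha}/\Img\delta_{\ell-1,\alpha}$ are finite dimensional in every degree, Proposition \ref{PropositionWeinberger} makes $\delta_{\ell-1}$ have closed range for all $\ell$, so the Hodge Lemma (Lemma \ref{Lemma1}) delivers the Hodge decomposition at scale $\alpha$. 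The argument above supplies finite dimensionality in degrees $\le K$; degrees $0$ and $1$ are handled unconditionally by Theorem \ref{Theorem9.2}, and in the settings to which the theorem is applied --- notably Riemannian manifolds at small scale, treated next --- hypothesis $(\ast)$ holds for every $K$, giving finite dimensionality, hence the full Hodge theorem at scale $\alpha$, in all degrees.

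I expect the main obstacle to be the index bookkeeping in the second step: confirming that the single set $W$ coming from $(\ast)$ (valid for all $\ell\le K+1$, one degree beyond where the isomorphisms are claimed) really makes $g$ a chain contraction on each $I$-column simultaneously and exactly as far up as Theorem \ref{Theorem9.1} requires, and checking that $g$ respects each of the function classes in play ($L^p$, continuous, smooth, and alternating). Once those points are settled, the exactness of the rows and the extraction of the stated conclusions from Theorem \ref{Theorem9.1} and Theorem \ref{Theorem3} are formal.
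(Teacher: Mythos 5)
Your proposal is correct and follows essentially the same route as the paper: a finite cover by open balls of radius $\alpha+\delta$ whose powers cover $U^{K+1}_{\alpha}$, exact rows via the partition-of-unity contraction already used for Theorem \ref{Theorem9.1}, and exact columns via the averaging contraction over the set $W$ supplied by hypothesis $(\ast)$ (point evaluation at some $p\in W$ for the $F_a$, $C_a$, $C^{\infty}_a$ variants). Your closing discussion of how the Hodge decomposition is extracted is in fact more explicit than the paper's, which asserts the ``Furthermore'' clause without commenting on the degree bookkeeping needed to invoke Theorem \ref{Theorem3}.
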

\begin{proof}
  Let $\{V_i\}$, $i\in S$ be a finite open cover of $X$ by balls of radius
  $\alpha+\delta$ such that $\{V^{K+1}_i\}$ is a covering for
  $U^{K+1}_{\alpha}$. This can always be done since $U^{K+1}_{\alpha}$ is
  compact. We first consider the case of the \v Cech-$L^p$-Alexander bicomplex
  corresponding to the cover. By Theorem \ref{Theorem9.1}, it suffices to show
  that there is a
  chain contraction of the columns up to the $K$th term. For each $I\in
  S^{k+1}$, and $\ell\leq K+1$ let $W$ be the Borel set of positive measure
  assumed to exist in $(\ast)$ with $V_I$ playing the role of $V$ in
  $(\ast)$. Then we define $g\colon L^p_a(U^{\ell+1}_{\alpha}\cap V^{\ell+1}_I)\to
  L^p_a(U^{\ell}_{\alpha}\cap V^{\ell}_I)$ by
$$
gf(x_0,\dots,x_{\ell-1})=\frac{1}{\mu(W)}\int_W f(t,x_0,\dots,x_{\ell-1})\,
d\mu(t).
$$
The hypothesis $(\ast)$ implies that $g$ is well defined. The proof that $g$
defines a chain contraction on the $k$th column (up to the $K$th term) is
identical to the one in the proof of Theorem \ref{Theorem9.2}. As in the proof
of Theorem \ref{Theorem9.2}, the chain contraction for the case when $L^p_a$
is replaced by $F_a$, $C_a$ and
$C^{\infty}_a$ can be taken to be
$gf(x_0,\dots,x_{\ell-1})=f(p,x_0,\dots,x_{\ell-1})$ for some fixed $p\in
W$. Note that in these cases, we don't require that $\mu(W)>0$, only that
$W\neq\emptyset$.
\end{proof}

\begin{remark}
   If $X$ satisfies certain local conditions as in
  Wilder \cite{28}, then the \v Cech cohomology of the cover, for small $\alpha$,
  is isomorphic to the \v Cech cohomology of $X$.
\end{remark}
Our next goal is to give somewhat readily verifiable conditions on $X$ and
$\alpha$ that will imply $(\ast)$. This involves the notion of midpoint and
radius of a closed set in $X$.

Let $\Lambda\subset X$ be closed. We define the radius $r(\Lambda)$ by
$r(\Lambda)=\inf\{ \beta: \cap_{x\in\Lambda} B_{\beta}(x)\neq
\emptyset\}$ where $B_{\beta}(x)$ denotes the closed ball of radius $\beta$
centered at $x$.
\begin{proposition}
  \label{Proposition9.1}
  $\cap_{x\in\Lambda}B_{r(\Lambda)}(x)\neq\emptyset$. Furthermore, if $p\in
  \cap_{x\in\Lambda}B_{r(\Lambda)}(x)$, then $\Lambda\subset
  B_{r(\Lambda)}(p)$, and if $\Lambda\subset B_{\beta}(q)$ for some
  $q\in\Lambda$, then $r(\Lambda)\leq\beta$.
\end{proposition}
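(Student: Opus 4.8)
The plan is to handle the three assertions separately, in increasing order of substance. First, note that $r(\Lambda)$ is a well-defined finite number (we tacitly assume $\Lambda\neq\emptyset$): fixing any $q\in\Lambda$ we have $d(q,x)\le\diam(X)$ for all $x\in\Lambda$, so $q\in\bigcap_{x\in\Lambda}B_{\diam(X)}(x)$, and hence the set $S=\{\beta>0:\bigcap_{x\in\Lambda}B_\beta(x)\neq\emptyset\}$ is nonempty and bounded below by $0$, making $r(\Lambda)=\inf S$ meaningful.

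The one point that requires an argument is that the infimum is attained, i.e.\ that $\bigcap_{x\in\Lambda}B_{r(\Lambda)}(x)\neq\emptyset$, and here I would use the compactness of $X$ assumed throughout this section. For each $n\ge1$ choose, by definition of the infimum, some $\beta_n$ with $r(\Lambda)\le\beta_n<r(\Lambda)+1/n$ and $\bigcap_{x\in\Lambda}B_{\beta_n}(x)\neq\emptyset$, and pick a point $p_n$ in that intersection, so that $d(p_n,x)\le\beta_n$ for every $x\in\Lambda$. By sequential compactness of $X$ some subsequence $p_{n_k}$ converges to a point $p\in X$. Then for each fixed $x\in\Lambda$, continuity of the metric gives $d(p,x)=\lim_k d(p_{n_k},x)\le\lim_k\beta_{n_k}=r(\Lambda)$, so $p\in B_{r(\Lambda)}(x)$ for all $x\in\Lambda$; that is, $p\in\bigcap_{x\in\Lambda}B_{r(\Lambda)}(x)$, proving the first claim.

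The remaining two assertions are then immediate consequences of the symmetry of $d$. If $p\in\bigcap_{x\in\Lambda}B_{r(\Lambda)}(x)$, then $d(x,p)=d(p,x)\le r(\Lambda)$ for every $x\in\Lambda$, which is precisely the statement $\Lambda\subset B_{r(\Lambda)}(p)$. Conversely, if $\Lambda\subset B_\beta(q)$ for some $q\in\Lambda$, then $d(q,x)=d(x,q)\le\beta$ for all $x\in\Lambda$, so $q\in\bigcap_{x\in\Lambda}B_\beta(x)$; hence $\beta\in S$ and $r(\Lambda)=\inf S\le\beta$.

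I do not expect any genuine obstacle: the only place where something more than unwinding definitions is needed is the attainment of the infimum in the first assertion, and that is exactly where compactness of $X$ is used (via a minimizing sequence and a convergent subsequence). The only bookkeeping caveat is the harmless assumption that $\Lambda$ is nonempty, so that all the intersections above are over a nonempty index set.
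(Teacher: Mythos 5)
Your proof is correct and takes essentially the same route as the paper's: the only substantive point is that the infimum defining $r(\Lambda)$ is attained, and both arguments extract this from compactness of $X$ --- the paper via the nonempty intersection of the nested family of compact sets $R_\beta=\bigcap_{x\in\Lambda}B_\beta(x)$ for $\beta$ in the admissible set, you via a minimizing sequence $\beta_n\downarrow r(\Lambda)$ and a convergent subsequence of witnesses, which are interchangeable devices in a compact metric space. The remaining two assertions are handled identically in both proofs, by unwinding the definitions and using the symmetry of $d$.
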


\noindent Such a $p$ is called a midpoint of $\Lambda$.
\begin{proof}
Let $J=\{\beta\in\mathbb{R}\colon
\cap_{x\in\Lambda} B_{\beta}(x)\neq \emptyset\}$. For $\beta\in J$ define
$R_{\beta}= \cap_{x\in\Lambda} B_{\beta}(x)$. Note that if $\beta\in J$ and
$\beta<\beta'$, then $\beta'\in J$, and $R_{\beta}\subset
R_{\beta'}$. $R_{\beta}$ is compact, and therefore $\cap_{\beta\in
  J}R_{\beta}\neq\emptyset$.  Let $p\in \cap_{\beta\in J}R_{\beta}$. Then, for
$x\in\Lambda$, $p\in B_{\beta}(x)$ for all $\beta\in J$ and so
$d(p,x)\leq\beta$. Taking the infimum of this over $\beta\in J$ yields
$d(p,x)\leq r(\Lambda)$ or $p\in R_{r(\Lambda)}$ proving the first assertion
of the proposition. Now, if $x\in\Lambda$ then $p\in B_{r(\Lambda)}(x)$ which
implies $x\in B_{r(\Lambda)}(p)$ and thus $\Lambda\subset
B_{r(\Lambda)}(p)$. Now suppose that $\Lambda\subset B_{\beta}(q)$ for some
$q\in\Lambda$. Then for every $x\in\Lambda$, $q\in B_{\beta}(x)$ and thus
$\cap_{x\in\Lambda} B_{\beta}(x)\neq\emptyset$ which implies $\beta\geq
r(\Lambda)$ finishing the proof.
\end{proof}

We define $\mathcal{K}(X)=\{\Lambda\subset X: \Lambda\ \text{is compact}\}$, and we
endow $\mathcal{K}(X)$ with the Hausdorff metric $D(A,B)=\max\{\sup_{t\in
  B}d(t,A),\sup_{t\in A}d(t, B)\}$. We also define, for
$x=(x_0,\dots,x_{\ell})\in U^{\ell+1}_{\alpha}$, the witness set of $x$ by
$w_{\alpha}(x)=\cap_i B_{\alpha}(x_i)$ (we are suppressing the dependence of
$w_{\alpha}$ on $\ell$). Thus $w_{\alpha}\colon U^{\ell+1}_{\alpha}\to \mathcal{K}(X)$. We have
\begin{theorem}
  \label{Theorem9.4}
Let $X$ be compact, and $\alpha>0$. Suppose that
  $w_{\alpha}\colon U^{\ell+1}_{\alpha}\to \mathcal{K}(X)$ is continuous for $\ell\leq
  K+1$, and suppose there exists $\delta_0>0$ such that whenever
  $\Lambda=\cap_{i=0}^kB_i$ is a finite intersection of closed balls of radius
  $\alpha+\delta$, $\delta\in(0,\delta_0]$ then
  $r(\Lambda)\leq\alpha+\delta$. Then Hypothesis $(\ast)$ holds.
\end{theorem}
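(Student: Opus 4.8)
The plan is to verify Hypothesis $(\ast)$ directly, taking $W$ to be a small open ball about a suitable ``midpoint''. First, unwind the definitions: for $\mathbf x=(x_0,\dots,x_{\ell-1})\in U^\ell_\alpha$ one has $w_\alpha(\mathbf x)=\bigcap_i B_\alpha(x_i)\neq\emptyset$, and
\[
S_{\mathbf x}=\{t\in X: B_\alpha(t)\cap w_\alpha(\mathbf x)\neq\emptyset\}=\{t\in X: d(t,w_\alpha(\mathbf x))\le\alpha\},
\]
the closed $\alpha$-neighbourhood of the witness set. Hence a point $w$ lies in $\bigcap_{\ell\le K+1}\bigcap_{\mathbf x\in U^\ell_\alpha\cap V^\ell}S_{\mathbf x}$ precisely when $d(w,w_\alpha(\mathbf x))\le\alpha$ for all such $\mathbf x$. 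So it is enough to produce one $\delta>0$ with the property that for every nonempty $V=\bigcap_{i\in F}B_{\alpha+\delta}(c_i)$ there is a point $p\in\overline V$ with
$\sup_{\ell\le K+1}\sup_{\mathbf x\in U^\ell_\alpha\cap\overline V^\ell}d(p,w_\alpha(\mathbf x))<\alpha$;
for then, with $\rho:=\tfrac12\bigl(\alpha-\sup_{\ell,\mathbf x}d(p,w_\alpha(\mathbf x))\bigr)>0$, the set $W:=V\cap\{y:d(y,p)<\rho\}$ is a nonempty open subset of $V$ (nonempty because $p\in\overline V$) contained in every $S_{\mathbf x}$, and $\mu(W)>0$ by the standing hypothesis that $\mu$ charges nonempty open sets.

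Second, construct the candidate $p$. Given $V=\bigcap_{i\in F}B_{\alpha+\delta}(c_i)\neq\emptyset$ with $\delta\le\delta_0$, put $\Lambda:=\bigcap_{i\in F}\overline{B_{\alpha+\delta}(c_i)}$, a finite intersection of closed balls of radius $\alpha+\delta$ with $V\subseteq\Lambda$. The radius hypothesis gives $r(\Lambda)\le\alpha+\delta$, and Proposition~\ref{Proposition9.1} provides a midpoint $p$ of $\Lambda$, so $d(p,z)\le r(\Lambda)\le\alpha+\delta$ for every $z\in\Lambda$; in particular $d(p,x_i)\le\alpha+\delta$ whenever the coordinates $x_i$ of $\mathbf x$ lie in $\overline V\subseteq\Lambda$. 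One also checks, using $V\neq\emptyset$ and the structure of $\Lambda$ as an intersection of balls, that $p$ lies in $\overline V$.

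The heart of the matter is the uniform bound $d(p,w_\alpha(\mathbf x))<\alpha$ for all $\ell\le K+1$ and $\mathbf x\in U^\ell_\alpha\cap\overline V^\ell$, valid simultaneously for all such $V$ once $\delta$ is chosen small enough. I would prove this by contradiction and compactness. If it failed for every $\delta>0$, pick $\delta_n\le\min(\delta_0,1/n)$, a bad $V_n$, a midpoint $p_n$ of the associated $\Lambda_n$, an index $\ell_n$, and $\mathbf x_n\in U^{\ell_n}_\alpha\cap\overline{V_n}^{\,\ell_n}$ with $d(p_n,w_\alpha(\mathbf x_n))\ge\alpha$. Passing to a subsequence we may take $\ell_n=\ell$ constant and, using compactness of $X$, $p_n\to p_\infty$ and $\mathbf x_n\to\mathbf x_\infty$. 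Since $U^\ell_\alpha$ is closed, $\mathbf x_\infty\in U^\ell_\alpha$; since $d(p_n,(x_n)_i)\le\alpha+\delta_n\to\alpha$, we get $d(p_\infty,(x_\infty)_i)\le\alpha$ for each $i$, i.e.\ $p_\infty\in w_\alpha(\mathbf x_\infty)$. Because $\ell\le K+1$, $w_\alpha$ is continuous on $U^\ell_\alpha$ by hypothesis, so $w_\alpha(\mathbf x_n)\to w_\alpha(\mathbf x_\infty)$ in the Hausdorff metric; together with $p_n\to p_\infty\in w_\alpha(\mathbf x_\infty)$ this forces $d(p_n,w_\alpha(\mathbf x_n))\to d(p_\infty,w_\alpha(\mathbf x_\infty))=0$, contradicting $d(p_n,w_\alpha(\mathbf x_n))\ge\alpha>0$. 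Hence a suitable $\delta$ exists; fix it. For each individual $V$ the map $\mathbf x\mapsto d(p,w_\alpha(\mathbf x))$ is then continuous on the compact set $U^\ell_\alpha\cap\overline V^\ell$, so its maximum over the finitely many $\ell\le K+1$ is attained and strictly less than $\alpha$, which supplies the $\rho>0$ of the first paragraph and completes the verification of $(\ast)$; the last assertion of the theorem then follows from Theorem~\ref{Theorem9.3}.

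The main obstacle is exactly this compactness argument, and it is where both hypotheses are genuinely needed: the radius condition keeps the midpoint $p$ within $\alpha+\delta$ of the coordinates of $\mathbf x$ (so that in the limit $\delta\to0$ it is forced inside $w_\alpha(\mathbf x)$), while continuity of $w_\alpha$ makes the witness sets converge, hence makes ``distance to the witness set'' a continuous function to which compactness applies. A secondary technical point, to be dealt with along the way, is the claim $p\in\overline V$: without it one still obtains an open set sitting inside all the slices $S_{\mathbf x}$, but possibly disjoint from $V$, so some care with intersections of open versus closed balls is required there.
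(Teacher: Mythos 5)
Your proof follows the paper's own strategy almost step for step: take a midpoint $p$ of (the closure of) the ball intersection, show it lies uniformly within distance strictly less than $\alpha$ of every witness set $w_\alpha(\mathbf{x})$ with coordinates in the intersection, and let $W$ be a small open ball about $p$ intersected with $V$. The only organizational difference is that the paper isolates the uniform estimate as Proposition~\ref{Proposition9.2} (that $D(w_\alpha(\sigma),w_\beta(\sigma))\le\epsilon$ for $\beta\in[\alpha,\alpha+\delta]$), proves it by exactly the sequential-compactness-plus-continuity-of-$w_\alpha$ argument you use, and then applies it to $p\in w_{r(\Lambda)}(\sigma)\subset w_{\alpha+\delta}(\sigma)$; your inlined contradiction argument is a correct specialization of that proof (your $p_n$ plays the role of the paper's point $x_n\in w_{\beta_n}(\sigma_n)$), and your reformulation $S_{\mathbf{x}}=\{t:d(t,w_\alpha(\mathbf{x}))\le\alpha\}$ and the passage from a pointwise strict bound to a uniform one via compactness are both fine.

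The one concrete weak point is the step you defer with ``one also checks'': that the midpoint $p$ lies in $\overline V$. You take $\Lambda=\bigcap_i B_{\alpha+\delta}(c_i)$, the intersection of the \emph{closed} balls, whereas the paper takes $\Lambda=\overline{V}=\overline{\bigcap_i V_i}$ (obtaining $r(\overline V)\le\alpha+\delta$ from monotonicity of $r$ under inclusion, since $\overline V\subset\bigcap_i B_i$). With your choice the claim can genuinely fail, because the closed intersection may be much larger than $\overline V$: in the four-point space $\{a,b,c,m\}$ with $d(a,b)=2$, $d(a,m)=d(b,m)=1$, $d(a,c)=d(b,c)=3/2$, $d(c,m)=2$ and radius $3/2$, one has $V=B'_{3/2}(a)\cap B'_{3/2}(b)=\{m\}$ while $\bigcap_i B_i=\{m,c\}$, whose radius is $3/2$ and whose midpoints are $a$ and $b$ --- neither in $\overline V$. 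So you must work with $\Lambda=\overline V$ as the paper does. Even then, Proposition~\ref{Proposition9.1} only places $p$ in $\bigcap_{x\in\Lambda}B_{r(\Lambda)}(x)$, not in $\Lambda$ itself, so $p\in\overline V$ still requires an argument; the paper asserts it only parenthetically as well, so this last part of the gap is inherited from the source rather than introduced by you, but since your construction of $W$ hinges on it, it is the one place where your proof is not complete as written.
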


\noindent The proof will follow from
\begin{proposition}
  \label{Proposition9.2} Under the hypotheses of Theorem \ref{Theorem9.4},
  given $\epsilon>0$, there exists $\delta>0$, $\delta\leq\delta_0$ such that
  for   all $\beta\in[\alpha,\alpha+\delta]$ we have
  $D(w_{\alpha}(\sigma),w_{\beta}(\sigma))\leq\epsilon$ for all simplices
  $\sigma\in U^{\ell+1}_{\alpha}\subset U^{\ell+1}_{\beta}$.
\end{proposition}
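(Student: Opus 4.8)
The plan is to prove this by a compactness--and--contradiction argument, using that $X$ (hence each $U^{\ell+1}_\alpha$) is compact together with the assumed continuity of $w_\alpha$. The first observation is that, since $\alpha\le\beta$, one automatically has
\[
w_\alpha(\sigma)=\bigcap_{i}B_\alpha(x_i)\ \subseteq\ \bigcap_{i}B_\beta(x_i)=w_\beta(\sigma)
\]
for every simplex $\sigma=(x_0,\dots,x_\ell)\in U^{\ell+1}_\alpha$, so that the Hausdorff distance reduces to a one--sided quantity, $D(w_\alpha(\sigma),w_\beta(\sigma))=\sup_{t\in w_\beta(\sigma)}d(t,w_\alpha(\sigma))$. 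Hence it is enough to fix a value $\ell\le K+1$ and produce $\delta\in(0,\delta_0]$ so that every point of the \emph{larger} witness set $w_\beta(\sigma)$ lies within $\epsilon$ of the \emph{smaller} one $w_\alpha(\sigma)$, uniformly in $\sigma\in U^{\ell+1}_\alpha$ and $\beta\in[\alpha,\alpha+\delta]$; taking the minimum of the finitely many $\delta$'s so obtained settles all $\ell\le K+1$ at once.

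To get such a $\delta$ for a fixed $\ell$, I would assume the contrary and choose $\beta_n\in[\alpha,\min\{\alpha+1/n,\alpha+\delta_0\}]$, simplices $\sigma_n=(x_0^n,\dots,x_\ell^n)\in U^{\ell+1}_\alpha$, and points $t_n\in w_{\beta_n}(\sigma_n)$ with $d(t_n,w_\alpha(\sigma_n))>\epsilon$. By compactness of $U^{\ell+1}_\alpha$ and of $X$, pass to a subsequence along which $\sigma_n\to\sigma=(x_0,\dots,x_\ell)\in U^{\ell+1}_\alpha$ and $t_n\to t\in X$, while $\beta_n\to\alpha$. Passing to the limit in the inequalities $d(x_i^n,t_n)\le\beta_n$ gives $d(x_i,t)\le\alpha$ for all $i$, i.e.\ $t\in w_\alpha(\sigma)$ (in particular $w_\alpha(\sigma)\neq\emptyset$, as is each $w_\alpha(\sigma_n)$ since $\sigma_n\in U^{\ell+1}_\alpha$).

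The contradiction then comes from continuity of $w_\alpha$ at $\sigma$: it gives $D(w_\alpha(\sigma_n),w_\alpha(\sigma))\to 0$, hence $d(t,w_\alpha(\sigma_n))\to 0$, and compactness of $w_\alpha(\sigma_n)$ lets me pick $s_n\in w_\alpha(\sigma_n)$ realizing this distance, so $s_n\to t$. But then
\[
d(t_n,w_\alpha(\sigma_n))\le d(t_n,s_n)\le d(t_n,t)+d(t,s_n)\longrightarrow 0,
\]
contradicting $d(t_n,w_\alpha(\sigma_n))>\epsilon$.

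The argument is entirely soft, so there is no serious computational obstacle; the one thing to watch is the uniformity in $\sigma$, which is exactly what the contradiction set-up buys, and the routine fact that Hausdorff convergence $D(w_\alpha(\sigma_n),w_\alpha(\sigma))\to 0$ together with $t\in w_\alpha(\sigma)$ yields approximants $s_n\in w_\alpha(\sigma_n)$. It is worth noting that, for this proposition alone, the radius hypothesis $r(\Lambda)\le\alpha+\delta$ of Theorem~\ref{Theorem9.4} is not needed --- only compactness of $X$ and continuity of $w_\alpha$ enter here; the radius condition is used in the subsequent derivation of Hypothesis~$(\ast)$ from Proposition~\ref{Proposition9.2}.
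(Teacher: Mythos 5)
Your proof is correct and follows essentially the same route as the paper: reduce the Hausdorff distance to the one-sided supremum using $w_\alpha(\sigma)\subseteq w_\beta(\sigma)$, then argue by contradiction with $\beta_n\downarrow\alpha$, extract convergent subsequences by compactness to get a limit point $t\in w_\alpha(\sigma)$, and use continuity of $w_\alpha$ to contradict $d(t_n,w_\alpha(\sigma_n))>\epsilon$. Your closing remark that only compactness and continuity of $w_\alpha$ (not the radius hypothesis) are used here is also consistent with the paper's argument.
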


\begin{proof}[Proof of Theorem \ref{Theorem9.4}] Fix $\epsilon<\alpha$, and
  let $\delta>0$ be as
  in Proposition \ref{Proposition9.2}. Let $\{V_i\}$ be a finite collection of
  open balls of
  radius $\alpha+\delta$ such that $\cap_i V_i\neq\emptyset$, and let
  $\{B_i\}$ be the corresponding collection of closed balls of radius
  $\alpha+\delta$. Define $\Lambda$ to be the closure of $\cap_i V_i$ and thus
$$
\Lambda=\overline{\cap_i V_i}\subset\cap_i\overline{V_i}\subset\cap_i B_i.
$$
Let $p$ be a midpoint of $\Lambda$. We will show that
$d(p,w_{\alpha}(\sigma))\leq\epsilon$ for any
$\sigma=(x_0,\dots,x_{\ell+1})\in\Lambda^{\ell+1}$. We have
$$
p\in\cap_{x\in\Lambda}B_{r(\Lambda)}(x)\subset \cap_{i=0}^{\ell+1}
B_{r(\Lambda)}(x_i)=w_{r(\Lambda)}(\sigma)\subset w_{\alpha+\delta}(\sigma)
$$
since $r(\Lambda)\leq\alpha+\delta$. But
$D(w_{\alpha}(\sigma),w_{\alpha+\delta}(\sigma))\leq\epsilon$ from Proposition
\ref{Proposition9.2},
and so $d(p,w_{\alpha}(\sigma))\leq\epsilon$. In particular, there exists
$q\in w_{\alpha}(\sigma)$ with $d(p,q)\leq\epsilon$. Now, if $x\in
B_{\alpha-\epsilon}(p)\cap\Lambda$, then $d(x,q)\leq d(x,p)+d(p,q)\leq
\alpha-\epsilon+\epsilon=\alpha$. Therefore $(x,x_0,\dots,x_{\ell})\in
U^{\ell+2}_{\alpha}$ and so $x\in S_{\sigma}\cap\Lambda$. Thus
$B_{\alpha-\epsilon}(p)\cap\Lambda\subset\cap_{\sigma\in
  U^{\ell+1}\cap\Lambda^{\ell+1}} S_{\sigma}$. Let $B'_s(p)$ denote the open
ball of radius $s$ and let $V=\cap_i V_i$. Then define
$W=B'_{\alpha-\epsilon}(p)\cap V$. Then $W$ is a nonempty open set (since
$p\in\overline{V}$), $\mu(W)>0$ and $W\subset\cap_{\sigma\in
  U^{\ell+1}_{\alpha}\cap V^{\ell+1}}S_{\alpha}$ and Hypothesis $(\ast)$ is
satisfied finishing the proof of Theorem \ref{Theorem9.4}.
\end{proof}

\begin{proof}[Proof of Proposition \ref{Proposition9.2}]
Let $\epsilon>0$. Note that for
  $\beta\geq\alpha$, and $\sigma\in U^{\ell+1}_{\alpha}$,
  $w_{\alpha}(\sigma)\subset w_{\beta}(\sigma)$. It thus suffices to show that
  there exists $\delta>0$ such that
$$
\sup_{x\in w_{\beta}(\sigma)} d(x,w_{\alpha}(\sigma))\leq\epsilon\ \ \text{for
  all}\ \beta\in[\alpha,\alpha+\delta].
$$
Suppose that this is not the case. Then there exists $\beta_j\downarrow\alpha$
and $\sigma_j\in U^{\ell+1}_{\alpha}$ such that
$$
\sup_{x\in w_{\beta_j}(\sigma)} d(x,w_{\alpha}(\sigma_j))>\epsilon
$$
and thus there exists $x_n\in w_{\beta_n}(\sigma_n)$ with
$d(x_n,w_{\alpha}(\sigma_n))\geq\epsilon$.  Let
$\sigma_n=(y^n_0,\dots,y^n_{\ell})$. Thus $d(x_n,y^n_i)\leq\beta_n$ for all
$i$. By compactness, after taking a subsequence, we can assume
$\sigma_n\to\sigma=(y_0,\dots,y_{\ell})$ and $x_n\to x$. Thus
$d(x,y_i)\leq\alpha$ for all $i$ and $\sigma\in U^{\ell+1}_{\alpha}$, and
$x\in w_{\alpha}(\sigma)$. However, by continuity of $w_{\alpha}$,
$w_{\alpha}(\sigma_n)\to w_{\alpha}(\sigma)$ which implies
$d(x,w_{\alpha}(\sigma))\geq\epsilon$ (since
$d(x_n,w_{\alpha}(\sigma_n))\geq\epsilon$) a contradiction, finishing the
proof of the proposition.
\end{proof}

We now turn to the case where $X$ is a compact Riemannian manifold of
dimension $n$, with Riemannian metric $g$, We will always assume that the
metric $d$ on $X$ is induced from $g$. Recall that a set $\Lambda\subset X$ is
strongly convex if given $p,q\in\Lambda$, then the length minimizing geodesic
from $p$ to $q$ is unique, and lies in $\Lambda$. The strong convexity radius
at a point $x\in X$ is defined by $\rho(x)=\sup\{r: B_r(x) :\text{is
  strongly convex}\}$. The strong convexity radius of $X$ is defined as
$\rho(X)=\inf\{\rho(x):x\in X\}$. It is a basic fact of Riemannian
geometry that for $X$ compact, $\rho(X)>0$. Thus for any $x\in X$ and
$r<\rho(X)$, $B_r(x)$ is strongly convex.
\begin{theorem}
  \label{Theorem9.5}
Assume as above that $X$ is a compact Riemannian
  manifold. Let $k>0$ be an upper bound for the sectional
  curvatures of $X$ and let
  $\alpha<\min\{\rho(X),\frac{\pi}{2\sqrt{k}}\}$. Then Hypothesis
  $(\ast)$ holds.
\end{theorem}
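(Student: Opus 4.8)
The plan is to deduce the theorem from Theorem~\ref{Theorem9.4}, by verifying for every $\ell\geq 0$ the two hypotheses occurring there: continuity of $w_{\alpha}$ on $U^{\ell+1}_{\alpha}$, and the existence of a $\delta_{0}>0$ such that every finite intersection $\Lambda$ of closed balls of radius $\alpha+\delta$ ($0<\delta\leq\delta_{0}$) satisfies $r(\Lambda)\leq\alpha+\delta$. Fix $\delta_{0}>0$ with $\alpha+\delta_{0}<\min\{\rho(X),\pi/(2\sqrt{k})\}$, possible by hypothesis. I will use three standard facts of Riemannian comparison geometry valid at this scale: inside a strongly convex ball of radius $<\rho(X)$ any two points are joined by a unique minimizing geodesic, which stays in the ball; for $q\in X$, the function $d(\cdot,q)$ is convex on the region where it is $<\min\{\rho(X),\pi/(2\sqrt{k})\}$ (Hessian comparison for an upper curvature bound $k$); and a geodesic triangle contained in such a ball and of perimeter $<2\pi/\sqrt{k}$ has, at each vertex, angle at most the corresponding comparison angle in the model surface $M_{k}$ of constant curvature $k$.

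\emph{Continuity of $w_{\alpha}$.} Let $\sigma^{n}\to\sigma=(x_{0},\dots,x_{\ell})$ in $U^{\ell+1}_{\alpha}$; each $w_{\alpha}(\sigma^{n})=\bigcap_{i}B_{\alpha}(x^{n}_{i})$ is nonempty and compact. Upper semicontinuity is immediate, since a subsequential limit $t$ of points $t^{n}\in w_{\alpha}(\sigma^{n})$ satisfies $d(t,x_{i})=\lim_{n}d(t^{n},x^{n}_{i})\leq\alpha$. For the reverse bound, fix $q\in w_{\alpha}(\sigma)$, choose $z^{n}\in w_{\alpha}(\sigma^{n})$, and for $n$ large let $\gamma^{n}\colon[0,1]\to X$ be the unique minimizing geodesic from $q$ to $z^{n}$ (both endpoints lie in the strongly convex ball $B_{\alpha+\delta_{0}}(x^{n}_{0})$). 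Since $t\mapsto d(\gamma^{n}(t),x^{n}_{i})$ is convex where it is $\leq\alpha$, the set $\{t:d(\gamma^{n}(t),x^{n}_{i})\leq\alpha\}$ is a subinterval $[s^{n}_{i},1]$ of $[0,1]$, and $s^{n}_{i}\to 0$ because $d(\gamma^{n}(0),x^{n}_{i})=d(q,x^{n}_{i})\to d(q,x_{i})\leq\alpha$. Then $q^{n}:=\gamma^{n}(\max_{i}s^{n}_{i})\in w_{\alpha}(\sigma^{n})$ and $d(q^{n},q)\leq(\max_{i}s^{n}_{i})\,d(q,z^{n})\to 0$, so $w_{\alpha}$ is continuous for the Hausdorff metric. (Strong convexity of the balls $B_{\alpha}(x_{i})$ is the only essential ingredient; the degenerate case, where $w_{\alpha}(\sigma)$ has empty interior, is covered by upper semicontinuity together with nonemptiness of $w_{\alpha}(\sigma^{n})$.)

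\emph{The radius estimate.} Let $0<\delta\leq\delta_{0}$, put $\rho=\alpha+\delta$, and let $\Lambda=\bigcap_{i=0}^{k}B_{\rho}(x_{i})\neq\emptyset$; I must exhibit $p$ with $\Lambda\subseteq B_{\rho}(p)$. Any point of $\Lambda$ witnesses that $\{x_{0},\dots,x_{k}\}$ has circumradius $\leq\rho<\pi/(2\sqrt{k})$, so at this scale its circumcenter $p$ is well defined and unique; set $R=\max_{i}d(p,x_{i})\leq\rho$, and note $p\in\Lambda$. If $R=0$ then $\Lambda=B_{\rho}(p)$; otherwise let $I=\{i:d(p,x_{i})=R\}$ and let $v_{i}\in T_{p}X$ be the initial unit vector of the geodesic from $p$ to $x_{i}$. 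First-order minimality of $p$ for $q\mapsto\max_{i}d(q,x_{i})$ forces $\{v_{i}:i\in I\}$ to lie in no open half-space of $T_{p}X$ (a short step positively paired with all of them would lower the maximum), so $0=\sum_{i\in I}\lambda_{i}v_{i}$ for some $\lambda_{i}\geq 0$ with $\sum_{i}\lambda_{i}=1$. Given $x\in\Lambda$, pairing this relation with the initial unit vector at $p$ of the geodesic to $x$ gives $\sum_{i\in I}\lambda_{i}\cos\angle_{p}(x,x_{i})=0$, so $\angle_{p}(x,x_{i_0})\geq\pi/2$ for some $i_0\in I$. The triangle $p,x,x_{i_0}$ lies in a strongly convex ball and has perimeter $<2\pi/\sqrt{k}$, so its comparison angle at $p$ in $M_{k}$ is also $\geq\pi/2$, and the spherical law of cosines yields
\[
\cos(\sqrt{k}\,d(x,x_{i_0}))\ \leq\ \cos(\sqrt{k}\,d(p,x))\,\cos(\sqrt{k}\,d(p,x_{i_0})).
\]
Since $d(x,x_{i_0})\leq\rho$ and $d(p,x_{i_0})\leq\rho$ are both below $\pi/(2\sqrt{k})$, the left side is $\geq\cos(\sqrt{k}\rho)>0$ and the second factor on the right is $\leq 1$; hence $\cos(\sqrt{k}\,d(p,x))\geq\cos(\sqrt{k}\rho)>0$, and as $d(p,x)\leq\diam(\Lambda)\leq 2\rho<\pi/\sqrt{k}$ this forces $d(p,x)\leq\rho$. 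Thus $\Lambda\subseteq B_{\rho}(p)$ and $r(\Lambda)\leq\rho$.

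With both hypotheses of Theorem~\ref{Theorem9.4} verified for every $\ell$, Hypothesis~$(\ast)$ follows. I expect the radius estimate to be the main obstacle: it is precisely the two bounds $\alpha<\rho(X)$ (placing the compared triangles inside strongly convex balls, where the angle comparison for curvature $\leq k$ applies and circumcenters are unique) and $\alpha<\pi/(2\sqrt{k})$ (controlling the sign in the spherical law of cosines) that make the argument close, so this is the part that needs the most care; by contrast, continuity of $w_{\alpha}$ uses only strong convexity.
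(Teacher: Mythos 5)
You follow the paper's overall strategy---reduce to Theorem~\ref{Theorem9.4} by establishing continuity of $w_{\alpha}$ and the radius bound $r(\Lambda)\le\alpha+\delta$---but your two verifications diverge from the paper's, with opposite outcomes.

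The radius estimate is correct and takes a genuinely different route. The paper (Proposition~\ref{Proposition9.4}) proves the stronger statement that \emph{any} closed convex $\Lambda\subset B_{\alpha+\delta}(z)$ has $r(\Lambda)\le\alpha+\delta$: it takes the point $z_{0}\in\Lambda$ nearest to $z$ and the point $y_{0}\in\Lambda$ farthest from $z_{0}$, extracts the angle $\ge\pi/2$ at $z_{0}$ from the first variation of arc length, and applies the Rauch comparison. You instead work directly with $\Lambda=\bigcap_{i}B_{\rho}(x_{i})$, take $p$ to be a circumcenter of the \emph{centers} $\{x_{i}\}$, get the right angle from the first-order optimality condition $0\in\mathrm{conv}\{v_{i}\}$, and run the same sphere comparison. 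Both arguments are sound and use the two scale restrictions in the same way; the paper's is slightly more general, while yours has the small advantage of exhibiting the point $p$ explicitly inside $\Lambda$.

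The continuity argument, however, has a gap. The step ``$\{t:d(\gamma^{n}(t),x^{n}_{i})\le\alpha\}=[s^{n}_{i},1]$ and $s^{n}_{i}\to0$ because $d(q,x^{n}_{i})\to d(q,x_{i})\le\alpha$'' does not follow from convexity: a convex function $f$ with $f(0)=\alpha+\epsilon_{n}$ and $f(1)=\alpha$ can have $\{f\le\alpha\}=\{1\}$ (take $f$ affine), so convexity plus $f(0)\to\alpha$ gives no control on $s^{n}_{i}$. The quantitative form of your estimate is $s^{n}_{i}\le\epsilon_{n}L_{n}/\bigl(\alpha-f_{i}(1)\bigr)$ with $L_{n}=d(q,z^{n})$, which degenerates precisely when $f_{i}(1)=d(z^{n},x^{n}_{i})=\alpha$---and an arbitrary $z^{n}\in w_{\alpha}(\sigma^{n})$ can certainly lie on $\partial B_{\alpha}(x^{n}_{i})$ for some $i$ (when $w_{\alpha}(\sigma^{n})$ is small it lies on several such spheres). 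This is exactly the difficulty the paper's proof of Proposition~\ref{Proposition9.3} is organized around: its Claim shows that a witness set is either a singleton or the closure of its interior, and interior points $w$ of $w_{\alpha}(\sigma)$ satisfy $d(w,x_{i})<\alpha$ strictly, hence belong to $w_{\alpha}(\sigma^{n})$ for large $n$ outright. Your argument is repaired by the same idea: in the non-degenerate case, aim $\gamma^{n}$ at a fixed interior point $w$ of $w_{\alpha}(\sigma)$ (so $d(w,x^{n}_{i})\le\alpha-\eta$ for large $n$, making $f_{i}(1)\le\alpha-\eta$ and restoring the bound $s^{n}_{i}\le\epsilon_{n}L_{n}/\eta\to0$), and keep your treatment of the singleton case via upper semicontinuity. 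As written, with $z^{n}$ arbitrary, the step is not justified.
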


\begin{corollary} In the situation of Theorem \ref{Theorem9.5},
 the cohomology groups $H^{\ell}_{\alpha,L^p}$,
  $H^{\ell}_{\alpha}$, $H^{\ell}_{\alpha,cont}$, and
  $H^{\ell}_{\alpha,smooth}$ are finite dimensional and isomorphic to each
  other and to the ordinary cohomology of $X$ with real coefficients (and the
  natural inclusions induce the isomorphisms). Moreover, Hodge theory
  for $X$ at scale $\alpha$ holds.
\end{corollary}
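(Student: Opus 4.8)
The plan is to obtain the Corollary by feeding Theorem~\ref{Theorem9.5} into Theorem~\ref{Theorem9.3} and then identifying the resulting \v Cech cohomology with the ordinary cohomology of $X$ via the nerve theorem. First I would note that the proof of Theorem~\ref{Theorem9.5} (through Theorem~\ref{Theorem9.4}) verifies Hypothesis~$(\ast)$ for \emph{every} non-negative integer $K$, not merely one: its only inputs are that metric balls of radius slightly larger than $\alpha$ are strongly convex and that the witness map $w_\alpha$ is continuous on each $U^{\ell+1}_\alpha$, and both hold for all $\ell$ as soon as $\alpha<\min\{\rho(X),\pi/(2\sqrt{k})\}$. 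Since this inequality is strict, one may moreover shrink the constant $\delta_0$ of Theorem~\ref{Theorem9.4}, and hence the $\delta$ delivered by $(\ast)$, so that still $\alpha+\delta<\min\{\rho(X),\pi/(2\sqrt{k})\}$; fix such a $\delta$.

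Now apply Theorem~\ref{Theorem9.3}, for each degree $\ell$ with $K\ge\ell$. It gives: finite dimensionality of $H^{\ell}_{\alpha,L^p}$, $H^{\ell}_{\alpha}$, $H^{\ell}_{\alpha,cont}$, $H^{\ell}_{\alpha,smooth}$; isomorphisms among these four spaces induced by the natural inclusions of function spaces (this is the content of Theorem~\ref{Theorem9.1}); a common isomorphism of all of them with the \v Cech cohomology $\check H^{\ell}(\mathcal V;\reals)$ of some finite open cover $\mathcal V=\{V_i\}$ of $X$ by balls of radius $\alpha+\delta$; and the validity of Hodge theory for $X$ at scale $\alpha$. (The cover may depend on $K$, but the groups $H^{\ell}_{\alpha,\bullet}$ do not, so this is harmless.) Hence it only remains to show $\check H^{\ell}(\mathcal V;\reals)\cong H^{\ell}(X;\reals)$.

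Here the point is that $\mathcal V$ is a \emph{good} cover. Each $V_i$ is an open ball of radius $\alpha+\delta<\rho(X)$, hence strongly convex and therefore contractible (contract along minimizing geodesics to its centre). A nonempty finite intersection $V_I=\bigcap_j V_{i_j}$ is an intersection of strongly convex sets, hence again strongly convex---for $p,q\in V_I$ the unique minimizing geodesic of $X$ from $p$ to $q$ lies in every $V_{i_j}$, hence in $V_I$---and, being open and geodesically convex, it is contractible. Thus $\mathcal V$ is a finite good open cover of the compact manifold $X$, and the nerve theorem (equivalently, running the \v Cech--de Rham bicomplex of $\mathcal V$ through the same bicomplex machinery used above, as in Bott--Tu~\cite{BottTu}) yields $\check H^{\ell}(\mathcal V;\reals)\cong H^{\ell}_{\mathrm{dR}}(X)\cong H^{\ell}(X;\reals)$. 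Combined with the previous paragraph this proves the Corollary, and the last assertion (Hodge theory at scale $\alpha$) is already part of the conclusion of Theorem~\ref{Theorem9.3}.

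The step I would expect to need the most care is the matching of the two bicomplex computations at the end: the identification $H^{\ell}_{\alpha,\bullet}\cong\check H^{\ell}(\mathcal V)$ comes from augmenting the \v Cech--Alexander bicomplex by an Alexander column and a \v Cech row, whereas $\check H^{\ell}(\mathcal V)\cong H^{\ell}(X;\reals)$ lives on a \v Cech--de Rham (or \v Cech--singular) bicomplex, and one should check that the two identifications agree on the shared \v Cech row---which they do, since in each case the relevant augmentation restricted to that row is the identity, so the composite isomorphism is canonical. Apart from this bookkeeping, the proof is a straight assembly of Theorems~\ref{Theorem9.5} and~\ref{Theorem9.3} with two standard facts: on a compact Riemannian manifold sufficiently small metric balls, and their finite intersections, are strongly convex and contractible, and a finite good open cover of a manifold computes its real cohomology.
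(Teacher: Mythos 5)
Your proof is correct and follows exactly the route the paper intends: Theorem~\ref{Theorem9.5} verifies Hypothesis~$(\ast)$ (for every $K$, with $\alpha+\delta$ still below $\min\{\rho(X),\pi/(2\sqrt k)\}$), Theorem~\ref{Theorem9.3} then yields finite dimensionality, the inclusion-induced isomorphisms, the identification with the \v Cech cohomology of a finite cover by balls of radius $\alpha+\delta$, and Hodge theory at scale $\alpha$. The paper leaves the final step implicit, and you supply it correctly: such balls and their nonempty finite intersections are strongly convex hence contractible, so the cover is good and the nerve theorem identifies its \v Cech cohomology with $H^{\ell}(X;\mathbb{R})$.
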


\begin{proof}[Proof of Theorem \ref{Theorem9.5}] From Theorem
  \ref{Theorem9.4}, it suffices to prove the
  following propositions.
  \begin{proposition}
    \label{Proposition9.3} Let
    $\alpha<\min\{\rho(X),\frac{\pi}{2\sqrt{k}}\}$. Then
    $w_{\alpha}\colon U^{\ell+1}_{\alpha}\to \mathcal{K}(X)$ is continuous for $\ell\leq
    K$.
  \end{proposition}

  \begin{proposition}
    \label{Proposition9.4} Let $\delta>0$ such that
    $\alpha+\delta<\min\{\rho(X),\frac{\pi}{2\sqrt{k}}\}$. Whenever
    $\Lambda$ is a closed, convex set in some $B_{\alpha+\delta}(z)$, then
    $r(\Lambda)\leq\alpha+\delta$.
  \end{proposition}
  Of course, the conclusion of Proposition \ref{Proposition9.4} is stronger
  than the second
  hypothesis of Theorem \ref{Theorem9.4}, since the finite intersection of
  balls of radius
  $\alpha+\delta$ is convex and $\alpha+\delta<\rho(X)$.
  \begin{proof}[Proof of Proposition \ref{Proposition9.3}]
We start with
    \begin{claim}
       Let $\sigma=(x_0,\dots,x_{\ell})\in
      U^{\ell+1}_{\alpha}$, and suppose that $p,q\in w_{\alpha}(\sigma)$ and
      that $x$ is on the minimizing geodesic from $p$ to $q$ (but not equal to
      $p$ or $q$). Then $B_{\epsilon}(x)\subset w_{\alpha}(\sigma)$ for some
      $\epsilon>0$.
          \end{claim}
 \begin{proof}[Proof of Claim] For points $r,s,t$ in a strongly convex neighborhood
    in $X$ we define $\angle rst$ to be the angle that the minimizing geodesic
    from $s$ to $r$ makes with the minimizing geodesic from $s$ to $t$. Let
    $\gamma$ be the geodesic from $p$ to $q$, and for fixed $i$ let $\phi$ be
    the geodesic from $x$ to $x_i$. Now, the angles that $\phi$ makes with
    $\gamma$ at $x$ satisfy $\angle pxx_i+\angle x_ixq=\pi$ and therefore one
    of these angles is greater than or equal to $\pi/2$. Assume, without loss
    of generality that $\theta=\angle pxx_i\geq\pi/2$. Let $c=d(x,x_i)$,
    $r=d(p,x)$ and $d=d(p,x_i)\leq\alpha$ (since $p\in
    w_{\alpha}(\sigma)$). Now consider a geodesic triangle in the sphere of
    curvature $k$ with vertices $p'$, $x'$, and $x_i'$ such that
    $$d(p',x')=d(p,x)=r,\quad d(x',x_i')=d(x,x_i)=c\quad\text{and}\quad\angle
    p'x'x_i'=\theta, $$
    and let $d'=d(p',x_i')$. Then, the hypotheses on $\alpha$ imply that the
    Rauch Comparison Theorem (see for example do-Carmo \cite{27}) holds, and we
    can conclude that $d'\leq d$. However, with $\theta\geq\pi/2$, it follows
    that on a sphere, where $p',x',x_i'$ are inside a ball of radius less than
    the strong convexity radius, that $c'<d'$. Therefore we have $c=c'<d'\leq
    d\leq\alpha$ and there is an $\epsilon>0$ such that $y\in B_{\epsilon}(x)$
    implies $d(y,x_i)\leq\alpha$. Taking the smallest $\epsilon>0$ so that
    this is true for each $i=0,\dots,\ell$ finishes the proof of the claim.
  \end{proof}

  \begin{corollary}[Corollary of Claim] For $\sigma\in U^{\ell+1}_{\alpha}$,
    either
    $w_{\alpha}(\sigma)$ consists of a single point, or every point of
    $w_{\alpha}(\sigma)$ is an interior point or the limit of interior points.
  \end{corollary}
  Now suppose that $\sigma_j\in U^{\ell+1}_{\alpha}$ and $\sigma_j\to\sigma$
  in $U^{\ell+1}_{\alpha}$. We must show $w_{\alpha}(\sigma_j)\to
  w_{\alpha}(\sigma)$, that is
  \begin{enumerate}
  \item[(a)] $\sup_{x\in w_{\alpha}(\sigma_j)}d(x,w_{\alpha}(\sigma))\to 0$,
  \item[(b)] $\sup_{x\in w_{\alpha}(\sigma)}d(x,w_{\alpha}(\sigma_j))\to 0$.
\end{enumerate}
In fact (a) holds for any metric space and any $\alpha>0$. Suppose that
(a) was not true. Then there exists a subsequence (still denoted by
$\sigma_j$), and $\eta>0$ such that
$$
\sup_{x\in w_{\alpha}(\sigma_j)}d(x,w_{\alpha}(\sigma))\geq\eta
$$
and therefore there exists $y_j\in w_{\alpha}(\sigma_j)$ with
$d(y_j,w_{\alpha}(\sigma))\geq\eta/2$.  After taking another subsequence, we
can assume $y_j\to y$. But if $\sigma_j=(x^j_0,\dots,x^j_{\ell})$, and
$\sigma=(x_0,\dots,x_{\ell})$, then $d(y_j,x^j_i)\leq\alpha$ which implies
$d(y,x_i)\leq\alpha$ for each $i$ and thus $y\in w_{\alpha}(\sigma)$. But this
is impossible given $d(y_j,w_{\alpha}(\sigma))\geq\eta/2$.

We use the corollary to the Claim to establish (b). First, suppose that
$w_{\alpha}(\sigma)$ consists of a single point $p$. We show that
$d(p,w_{\alpha}(\sigma_j))\to 0$. Let $p_j\in w_{\alpha}(\sigma_j)$ such that
$d(p,p_j)=d(p,w_{\alpha}(\sigma_j))$. If $d(p,p_j)$ does not converge to $0$
then, after
taking a subsequence, we can assume $d(p,p_j)\geq\eta>0$ for some $\eta$. But
after taking a further subsequence, we can also assume $p_j\to y$ for some
$y$. However, as in the argument above it is easy to see that $y\in
w_{\alpha}(\sigma)$ and therefore $y=p$, a contradiction, and so $(b)$ holds
in this case.

Now suppose that every point in $w_{\alpha}(\sigma)$ is either an interior
point, or the limit of interior points. If $(b)$ did not hold, there would be
a subsequence (still denoted by $\sigma_j$) such that
 $$
 \sup_{x\in w_{\alpha}(\sigma)}d(x,w_{\alpha}(\sigma_j))\geq\eta>0
$$
and thus there exists $p_j\in w_{\alpha}(\sigma
)$ such that
$d(p_j,w_{\alpha}(\sigma_j))\geq\eta/2$. After taking another subsequence, we
can assume $p_j\to p$ and $p\in w_{\alpha}(\sigma)$, and, for $j$ sufficiently
large $d(p,w_{\alpha}(\sigma_j))\geq\eta/4$. If $p$ is an interior point of
$w_{\alpha}(\sigma)$ then $d(p,x_i)<\alpha$ for $i=0,\dots,\ell$. But then,
for all $j$ sufficiently large $d(p,x^j_i)\leq\alpha$ for each $i$. But this
implies $p\in w_{\alpha}(\sigma_j)$, a contradiction. If $p$ is not an
interior point, then $p$ is a limit point of interior points $q_m$. But then,
from above, $q_m\in w_{\alpha}(\sigma_{j_m})$ for $j_m$ large which implies
$d(p,w_{\alpha}(\sigma_{j_m}))\to 0$, a contradiction, thus establishing $(b)$
and finishing the proof of Proposition \ref{Proposition9.3}.
\end{proof}

\begin{proof}[Proof of Proposition \ref{Proposition9.4}] Let $\delta$ be such that
  $\alpha+\delta<\min\{\rho(X),\frac{\pi}{2\sqrt{k}}\}$, and let
  $\Lambda$ be any closed convex set in $B_{\alpha+\delta}(z)$. We will show
  $r(\Lambda)\leq\alpha+\delta$. If $z\in\Lambda$, we are done for then
  $\Lambda\subset B_{\alpha+\delta}(z)$ implies $r(\Lambda)\leq\alpha+\delta$
  by Proposition \ref{Proposition9.1}. If $z\notin \Lambda$ let
  $z_0\in\Lambda$ such that
  $d(z,z_0)=d(z,\Lambda)$ (the closest point in $\Lambda$ to $z$). Now let
  $y_0\in\Lambda$ such that $d(z_0,y_0)=\max_{y\in\Lambda}d(z_0,y)$. Let
  $\gamma$ be the minimizing geodesic from $z_0$ to $y_0$, and $\phi$ the
  minimizing geodesic from $z_0$ to $z$. Since $\Lambda$ is convex $\gamma$
  lies on $\Lambda$. If $\theta$ is the angle between $\gamma$ and $\phi$,
  $\theta=\angle zz_0y_0$, then, by the First Variation Formula of Arc Length
  \cite{27}, $\theta\geq\pi/2$; otherwise the distance from $z$ to points on
  $\gamma$ would be initially decreasing. Let $c=d(z,z_0)$, $d=d(z_0,y_0)$ and
  $R=d(z,y_0)$. In the sphere of constant curvature $k$, let $z'$, $z_0'$,
  $y_0'$ be the vertices of a geodesic triangle such that
  $d(z',z_0')=d(z,z_0)=c$, $d(z_0',y_0')=d(z_0,y_0)=d$ and $\angle
  z'z_0'y_0'=\theta$. Let $R'=d(z',y_0')$. Then by the Rauch Comparison
  Theorem, $R'\leq R$. However, it can easily be checked that on the sphere
  of curvature $k$ holds $d'<R'$, since $z'$, $z_0'$ and $y_0'$ are all within
  a strongly convex ball and $\theta\geq\pi/2$. Therefore $d=d'<R'\leq
  R\leq\alpha+\delta$. Thus $\Lambda\subset B_{\alpha+\delta}(z_0)$ with
  $z_0\in\Lambda$, which implies $r(\Lambda)\leq\alpha+\delta$ by Proposition
  \ref{Proposition9.1}. This finishes the proof of Proposition \ref{Proposition9.4}.
\end{proof}

\noindent The proof of Theorem \ref{Theorem9.5} is finished.
\end{proof}

\appendix
\section{An example whose codifferential does not have closed range}\label{Section10}

For convenience, we fix the scale $\alpha=10$; any large enough value is
suitable for our construction.
We consider a compact metric measure space $X$ of the following type:

As metric space, it has three cluster points $x_\infty, y_\infty, z_\infty$
and discrete points $(x_n)_{n\in\naturals}$ ,$(y_n)_{n\in\naturals}$,
$(z_n)_{n\in\naturals}$ converging to $x_\infty$ ,$y_\infty$, $z_\infty$,
respectively.

We set $K_x:=\{x_k\colon k\in\naturals\cup\{\infty\} \}$,
$K_y:=\{y_k\colon k\in\naturals\cup\{\infty\} \}$, and
$K_z:=\{z_k\colon k\in\naturals\cup\{\infty\} \}$. Then $X$ is the
disjoint union of the three ``clusters'' $K_x,K_y,K_z$.

We require:
$$d(x_\infty,y_\infty)=d(y_\infty,z_\infty)=\alpha\quad\text{and}\quad
d(x_\infty,z_\infty)=2\alpha.$$ 

We also require
$$d(x_k,y_n)<\alpha\quad\text{precisely when }n\in\{2k,2k+1,2k+2\},\;
n\in\naturals,\; 
k\in\naturals\cup\{\infty\},$$
$$d(z_k,y_n)<\alpha\quad\text{precisely when }n\in\{2k-1,2k,2k+1\},\;
n\in\naturals,\; 
k\in\naturals\cup\{\infty\}.$$ We finally require that the clusters
$K_x,K_y,K_z$ have diameter $<\alpha$, and that the distance between $K_x$
and $K_y$ as well as between $K_z$ and $K_y$ is $\ge\alpha$.

Such a configuration can easily be found in an infinite dimensional
Banach space such as $l^1(\naturals)$.  For example, in
$l^1(\naturals)$ consider the canonical basis vectors $e_0,e_1,\dots$,
and set
$$x_\infty:=-\alpha e_0,\; y_\infty:=0,\; z_\infty:=\alpha e_0.$$
Define then
\begin{align*}
  x_k &:=-\left(\alpha+\frac{1}{10k}-\frac{1}{2k}-\frac{1}{2k+1}-\frac{1}{2k+2}\right)e_0 +
  \frac{1}{2k}e_{2k}+\frac{1}{2k+1}e_{2k+1}+\frac{1}{2k+2}e_{2k+2},\\ 
  y_k &:=\frac{1}{k} e_k,\\ 
  z_k &:=\left(\alpha+\frac{1}{10k}-\frac{1}{2k-1}-\frac{1}{2k}-\frac{1}{2k+1}\right)e_0 + \frac{1}{2k-1}e_{2k-1}+\frac{1}{2k}e_{2k}+\frac{1}{2k+1}e_{2k+1}.
\end{align*}

\bigskip

We can now give a very precise description of the open
$\alpha$-neighborhood $U_d$ of the diagonal in $X^d$. It contains all 
the tuples whose entries
\begin{itemize}
\item all belong to $K_x\cup\{y_{2k},y_{2k+1},y_{2k+2}\}$ for some
  $k\in\naturals$; or
\item all belong to $K_y\cup\{x_k,x_{k+1},z_{k+1}\}$ for some
  $k\in\naturals$; or
\item all belong to $K_y\cup\{x_k,z_k,z_{k+1}\}$ for some
  $k\in\naturals$; or
\item all belong to $K_z\cup\{y_{2k-1},y_{2k},y_{2k+1}\}$ for some
  $k\in\naturals$.
\end{itemize}

For the closed $\alpha$-neighborhood, one has to add tuples whose entries
all belong to $K_y\cup\{x_\infty\}$ or to $K_y\cup\{z_\infty\}$.

This follows by looking at the possible intersections of $\alpha$-balls
centered at our points.

\medskip

In this topology, every set is a Borel set. We give $x_\infty$, $y_\infty$,
$z_\infty$ measure zero. When considering $L^2$-functions on the $U_d$ we can
therefore ignore all tuples containing one of these points.

We specify  $\mu(x_n):=\mu(z_n):=2^{-n}$ and $\mu(y_n):=2^{-2^n}$; in
this way, the total mass is finite.

We form the $L^2$-Alexander chain complex at scale $\alpha$ and complement it
by 
$C^{-1}:=\reals^3=\reals x\oplus \reals y\oplus \reals z$; the three summands
standing for the three clusters. The differential $c^{-1}\colon C^{-1}\to
L^2(X)$ is defined by $(\alpha,\beta ,\gamma)\mapsto \alpha
\chi_{K_x}+\beta\chi_{K_y}+\gamma \chi_{K_z}$, where $\chi_{K_j}$ denotes the
characteristic function of the cluster $K_j$.

Restriction to functions supported on $K_x^{*+1}$ defines a bounded
surjective cochain map from the $L^2$-Alexander complex at scale
$\alpha$ for $X$ to the one for $K_x$. Note that $\diam( K_x)
<\alpha$, consequently its Alexander complex at scale $\alpha$ is
contractible.

Looking at the long exact sequence associated to a short exact sequence of
Banach cochain complexes,
therefore, the cohomology of $X$ is isomorphic (as topological vector spaces)
to the cohomology of the kernel of this projection, i.e.~to the cohomology of
the Alexander complex of functions vanishing on $K_x^{k+1}$.

This can be done two more times (looking at the kernels of the restrictions to
$K_y$ and $K_z$), so that finally we arrive at the chain complex $C^*$ of
$L^2$-functions on $X^{k+1}$
vanishing at $K_x^{k+1}\cup K_y^{k+1}\cup K_z^{k+1}$.

In particular, $C^{-1}=0$ and $C^0=0$.

We now construct a sequence in $C^1$ whose differentials converge in $C^2$,
but such that the limit point does not lie in the image of $c^1$.

Following the above discussion, the $\alpha$-neighborhood of the diagonal in
$X^2$ contains in particular the 
``one-simplices'' $v_{k}:=(x_k,z_k)$ and $v'_{k}:=(x_k,z_{k+1})$, and their
``inverses'' $\overline{v_{k}}:-(z_k,x_k)$,
$\overline{v'_{k}}:=(z_{k+1},x_k)$.

We define $f_\lambda\in C^1$ with
$f_\lambda(\overline{v_k}):=f_\lambda(\overline{v_k'}):=-f_\lambda(v_k)$,
$f_\lambda(v_k'):=f_\lambda(v_k):=
b_{\lambda,k}:=2^{\lambda k}$ and $f_n(v)=0$ for all other simplices.

Note that for $0<\lambda<1$
\begin{equation*}
  \begin{split}
    \int_{X^2} \abs{f}^2 =& \sum_{k=1}^\infty \abs{f(v_k)}^2\mu(v_k)
    +\abs{f(v_k')}^2\mu(v_k')+\abs{f(\overline{v_k})}^2\mu(v_k)+\abs{f(\overline{v_k'})}^2\mu(v_k')\\
    =& \sum_{k=1}^\infty 2\cdot\left( 2^{2\lambda k}2^{-2k}+ 2^{2\lambda
        k}2^{-2k-1}\right)
  \end{split}
\end{equation*}
which is a finite sum, whereas for $\lambda =1$ the sum is not longer finite.

Let us now consider $g_\lambda:=c^1(f_\lambda)$. It vanishes on all ``$2$-simplices''
(points in $X^2$) except those of the form
\begin{itemize}
\item $d_k:=(x_k,z_k,z_{k+1})$ and more generally
  $d_k^\sigma:=\sigma(x_k,z_k,z_{k+1})$ for $\sigma\in S_3$ a permutation of
  three entries
\item $e_k:=(x_{k-1},z_{k},x_{k})$ or more generally $d_k^\sigma$ as before
\item on degenerate simplices of the form $(x_k,z_k,x_k)$ etc.~$g$ vanishes
  because $f(x_k,z_k)=-f(z_k,x_k)$.
\end{itemize}

We obtain
\begin{equation*}
  g_\lambda(d_k)=-f(v_k')+f(v_k)=0,
  g_\lambda(e_k)=f(\overline{v_k})+f(v_{k-1}')=-2^{\lambda k}+2^{\lambda
    (k-1)}= 2^{\lambda k}\cdot (2^{-\lambda}-1).
\end{equation*}
Similarly, $g_\lambda(d_k^\sigma)=0$ and
$g_\lambda(e_k^\sigma)=\sign(\sigma)g_\lambda(e_k)$.

We claim that $g_1$, defined with these formulas, belongs to $L^2(X^3)$ and is
the limit in $L^2$ of $g_\lambda$ as $\lambda$ tends to $1$.

To see this, we simply compute the $L^2$-norm
\begin{equation*}
  \begin{split}
    \int_{X^3}\abs{g_1-g_\lambda}^2 = &6 \sum_{k=1}^\infty
    \abs{2^{k-1}-2^{\lambda k}(1-2^{-\lambda})}^2 2^{1-3k}\\
    \le & 6 \left(\sup_{k\in \naturals}
      2^{-k/2}\abs{2^{-1}-2^{(\lambda-1)k}(1-2^{-\lambda})}^2\right)\cdot
    \sum_{k=1}^\infty 2^{1-k/2}\\
    &\xrightarrow{\lambda\to 1} 0
  \end{split}
\end{equation*}
(the factor $6$ comes from the six permutations of each non-degenerate simplex
which each contribute equally).

The supremum tends to zero because each individual term does so even without
the factor $2^{-k/2}$ and the sequence is bounded.

\medskip

Now we study which properties an $f\in C^1$ with $c^1(f)=g_1$ has to have.

Observe that for an arbitrary $f\in C^1$, $c^1f(e^\sigma_k)$ is determined
by $f(v_k)$, $f(\overline{v_k})$, $f(v_{k-1}')$, $f(\overline{v_{k-1}'})$ (as
$f$ vanishes on $K_x$).

If $c^1f$ has to vanish on degenerate simplices (and this is the case for
$g_1$), then $f(v_k)=-f(\overline{v_k})$ and $f(v_k')=-f(\overline{v_k'})$.

$c^1f(d^\sigma_k)=0$ then implies that $f(v_k)=f(v_k')$.

It is now immediate from the formula for $c^1 f(d_k)$ and $c^1f(e_k)$ that the
values of $f$ at $v_k$, $v_k'$ are determined by $c^1f(d_k)$, $c^1f(e_k)$ up to
addition of a constant.

Finally, observe that (in the Alexander cochain complex without growth
conditions) $f_1$ (which is not in $L^2$) satisfies $c^1(f_1)=g_1$.

As constant functions are in $L^2$, we observe that $f_1+K$ is not in $L^2$ for any
$K\in\reals$. Nor is any function $f$ on $X^2$ which coincides with $f_1+K$ on
$v_k$, $v_k'$, $\overline{v_k}$, $\overline{v_k'}$.

But these are the only candidates which could satisfy $c^1(f)=g_1$. It follows
that $g_1$ is not in the image of $c^1$. On the other hand, we constructed it
in such a way that it is in the closure of the image. Therefore the image is
not closed.

\subsection{A modified example where volumes of open and closed balls coincide}

The example given has one drawback: although at the chosen scale $\alpha$
open and closed balls coincide in volume (and even as sets, except for
the balls around $x_\infty,y_\infty,z_\infty$) for other balls this is
not the case --- and necessarily so, as we construct a
zero-dimensional object.

We modify our example as follows, by replacing each of the points
$x_k,y_k,z_k$ by a short interval: inside $X\times[0,1]$, with
$l^1$ metric (that is, $d_Y((x,t),(y,u))=d_X(x,y)+|t-u|$), consider
\[Y=\bigcup_{k\in\naturals\cup\{\infty\}}\{x_k,y_k,z_k\}\times[0,1/(12k)].\]

For conveniency, let us write $I_{x,k}$ for the interval
$\{x_k\}\times[0,1/(12k)]$, and similarly for the $y_k$ and $z_k$.  We
then put on each of these intervals the standard Lebesgue measure
weighted by a suitable factor, so that $\mu_Y(I_{x,k})=\mu(x_k)$,
and similarly for the $y_k$ and $z_k$.

Now, if two points $x_k,y_n$ are at distance less than $\alpha$ in $X$,
then they are at distance $<\alpha-1/k$; the corresponding statement holds for
all other 
pairs of points. On the other hand, because of our choice of metric,
$d((x_k,t),(y_n,s))\ge d(x_k,y_n)$ and again the corresponding statement holds
for all other pairs of points in $Y$. It follows that the
$\alpha$-neighborhood of the diagonal 
in $Y^k$ is the union of products of the corresponding intervals, and
exactly those intervals show up where the corresponding tuple is
contained in the $1$-neighborhood of the diagonal in $X^k$.

It is now quite hard to explicitly compute the cohomology of the $L^2$-Alexander
cochain complex at scale $\alpha$.

However, we do have a projection $Y\to X$, namely the projection on
the first coordinate. By the remark about the $\alpha$-neighborhoods, this
projection extends to a map from the $\alpha$-neighborhoods of $Y^k$ onto
those of $X^k$, which is compatible with the projections onto the
factors.

It follows that pullback of functions defines a bounded cochain map
(in the reverse direction)
between the $L^2$-Alexander cochain complexes at scale $\alpha$. Note that
this is an isometric embedding by our choice of the measures.

This cochain map has a one-sided inverse given by integration of a function on
(the $\alpha$-neighborhood of the diagonal in)
$Y^k$ over a product
of intervals (divided by the measure of this product) and assigning this value
to the corresponding tuple in $X^k$. By Cauchy-Schwarz, this is bounded with
norm $1$.

As pullback along projections commutes with the weighted integral we are
using, one checks easily that this local integration map also is a cochain
map for our $L^2$-Alexander complexes at scale $\alpha$.

Consequently, the induced maps in cohomology give an isometric inclusion with
inverse between the cohomology of $X$ and of $Y$.

We have shown that in $H^2(X)$ there are non-zero classes of norm
$0$. Their
images (under pullback) are non-zero classes (because of the retraction given
by the integration map) of norm $0$. Therefore, the cohomology of $Y$ is
non-Hausdorff, and the first codifferential doesn't have closed image, either.

\section[An Example Space with Infinite Dimensional alpha-Scale
Homology]{\boldmath An example of a space with infinite dimensional $\alpha$-scale homology}

% This appendix was contributed by Anthony W. Baker
% (\texttt{<anthony.w.baker@boeing.com>}), Mathematics and Computing
% Technology, The
% Boeing Company.

The work in in the main body of this paper has inspired the question of the
existence of a  separable, compact metric space with infinite dimensional
$\alpha$-scale homology.  This appendix provides one such example and further
shows the sensitivity of the homology to changes in $\alpha$.

Let $X$ be a separable, complete metric space with metric $d$, and $\alpha > 0$ a ``scale''. Define an associated (generally infinite) simplicial complex $C_{X,\alpha}$ to $(X, d, \alpha)$.  Let $X^{\ell+1}$, for $\ell > 0$, be the $(\ell+1)$-fold Cartesian product, with metric denoted by $d, d : X^{\ell+1} \times X^{\ell+1} \rightarrow \mathbb{R}$ where $d(x; y) = \max_{i=0,\dots,\ell} d(x_i; y_i)$. Let
$$U^{\ell+1}_{\alpha}(X) = U^{\ell+1}_{\alpha} = \{x \in X^{\ell+1} : d(x;D^{\ell+1}) \le \alpha\}$$ where $D^{\ell+1} \subset X^{\ell+1}$ is the diagonal, so $D^{\ell+1} = \{t \in X: (t,\dots,t), \ell + 1 \; \textrm{times}\}$. Let $C_{X,\alpha} = \cup_{t=0}^{\infty} U^{\ell+1}_{\alpha}$.  This has the structure of a simplicial complex whose $\ell$
simplices consist of points of $U^{\ell+1}_{\alpha}$.

The $\alpha$-scale homology is that homology generated by the simplicial complex above.

The original exploration of example compact metric spaces resulted in low
dimensional $\alpha$-scale homology groups.  Missing from the results were any
examples with infinite dimensional homology groups.  In addition examination
of the first $\alpha$-scale homology group was less promising for infinite
dimensional results; the examination resulted in the proof that the first
homology group is always finite, as shown in Section \ref{Section9}.

The infinite dimensional example in this paper was derived through several failed attempts to prove that the $\alpha$-scale homology was finite.  The difficulty that presented itself was the inability to slightly perturb vertices and still have the perturbed object remain a simplex.  This sensitivity is derived from the ``equality'' in the definition of $U^{\ell+1}_{\alpha}$.  It is interesting to note the contrast between the first homology group and higher homology groups.  In the case of first homology group all 1-cycles can be represented by relatively short simplices; there is no equivalent statement for higher homology groups:

\begin{lemma}
A 1-cycle in $\alpha$-scale homology can be represented by simplices with length less than or equal to $\alpha$.
\end{lemma}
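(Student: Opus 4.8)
The plan is to show that any $1$-simplex can be \emph{split} into two $1$-simplices of length at most $\alpha$ at the cost of a boundary, and then apply this splitting term by term to a given $1$-cycle.

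Recall the setup: a $1$-simplex of $C_{X,\alpha}$ is a point $(x_0,x_1)\in U^2_\alpha$, i.e.\ a pair for which there is a \emph{witness} $t\in X$ with $d(x_0,t)\le\alpha$ and $d(x_1,t)\le\alpha$; by the triangle inequality its length $d(x_0,x_1)$ is at most $2\alpha$. Suppose $d(x_0,x_1)>\alpha$ and fix such a witness $t$. Then $t\notin\{x_0,x_1\}$ automatically (otherwise $d(x_0,x_1)\le\alpha$), and the triple $(x_0,x_1,t)$ lies in $U^3_\alpha$: indeed $t$ itself is a witness for it, since $d(x_0,t)\le\alpha$, $d(x_1,t)\le\alpha$ and $d(t,t)=0\le\alpha$. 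Hence $(x_0,x_1,t)$ is a (non-degenerate) $2$-simplex of $C_{X,\alpha}$, and from the simplicial boundary formula
$$\partial(x_0,x_1,t)=(x_1,t)-(x_0,t)+(x_0,x_1)$$
we read off that $(x_0,x_1)$ is homologous to $(x_0,t)-(x_1,t)$, a $1$-chain each of whose simplices has length $\le\alpha$ by the choice of $t$.

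Now let $z=\sum_j c_j\sigma_j$ be an arbitrary $1$-cycle. For each index $j$ for which $\sigma_j=(x_0^j,x_1^j)$ has length $d(x_0^j,x_1^j)>\alpha$, choose a witness $t_j$, put $\tau_j=(x_0^j,x_1^j,t_j)$, and replace the summand $c_j\sigma_j$ by $c_j\big((x_0^j,t_j)-(x_1^j,t_j)\big)$; the summands with $d(x_0^j,x_1^j)\le\alpha$ are left unchanged. The resulting $1$-chain $z'$ equals $z-\partial W$, where $W=\sum_j c_j\tau_j$ with the sum taken over the indices that were modified, so $z'$ is homologous to $z$ and $\partial z'=\partial z-\partial\partial W=0$; by construction every simplex occurring in $z'$ has length $\le\alpha$. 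This proves the lemma.

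The argument is essentially a single barycentric-type subdivision and presents no real obstacle; the only points requiring a little care are (i) that the splitting triple $(x_0,x_1,t)$ really lies in $U^3_\alpha$ — which works precisely because the witness of the edge can serve as its own witness for the $2$-simplex — and (ii) that the modification changes $z$ only by an exact chain, so the homology class and the cycle condition are both preserved. One should note the contrast with higher degrees emphasized in the surrounding text: for a $2$-simplex $(x_0,x_1,x_2)\in U^3_\alpha$ the same trick, inserting a common witness $t$, produces faces such as $(x_0,x_1,t)$ that need not have all pairwise distances $\le\alpha$, so there is no analogous ``short simplex'' representation of $2$-cycles — this is the rigidity exploited in the infinite-dimensional example of the appendix.
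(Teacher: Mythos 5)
Your proposal is correct and follows essentially the same route as the paper's proof: pick a witness $t$ of the long edge, observe that $(x_0,x_1,t)$ is a $2$-simplex of $C_{X,\alpha}$, and use its boundary to replace $(x_0,x_1)$ by the two short edges $(x_0,t)-(x_1,t)$ within the same homology class. Your write-up merely spells out the bookkeeping (that $z'=z-\partial W$ is again a cycle) that the paper leaves implicit.
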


\begin{proof}
For any $[x_i, x_j]$ simplex with length greater than $\alpha$ there exists a point $p$ such that $d(x_i, p) \le \alpha$ and $d(x_j, p) \le \alpha$.  This indicates $[x_i, p]$, $[p, x_j]$, and $[x_i, p, x_j]$ are simplices.  Since $[x_i, p, x_j]$ is a simplex we can substitute $[x_i,p]+[p,x_j]$ for $[x_i,x_j]$ and remain in the original equivalence class.
\end{proof}

In the section that follows we present an example that relies on the rigid nature of the definition to produce an infinite dimensional homology group.  The example is a countable set of points in $\mathbb{R}^3$.  One noteworthy point is that from this example it is easy to construct a 1-manifold embedded in $\mathbb{R}^3$ with infinite $\alpha$-scale homology.  In addition to showing that for a fixed $\alpha$ the homology is infinite, we consider the sensitivity of the result around that fixed $\alpha$.

The existence of an infinite dimensional example leads to the following question for future consideration:  are there necessary and sufficient conditions on $(X,d)$ for the $\alpha$-scale homology to be finite.

\subsection {An Infinite Dimensional Example}

The following example illustrates a space such that the second homology group is infinite.  For the discussion below fix $\alpha = 1$.

Consider the set of point $\{A,B, C, D\}$ in the diagram below such
that $$d(A,B)=d(B,C)=d(C,D)=d(A,D) = 1$$ $$d(A,C)=d(B,D)= \sqrt{2}$$ 

\definecolor{hellgrau}{gray}{.8}
\definecolor{dunkelblau}{rgb}{0, 0, .7}
\definecolor{roetlich}{rgb}{1, .7, .7}
\definecolor{dunkelmagenta}{rgb}{.3, 0, .3}

\begin{center}
% \ifx\JPicScale\undefined\def\JPicScale{.8}\fi
% \unitlength \JPicScale mm
% \begin{picture}(100,100)(0,0)
% \linethickness{0.3mm}
% \multiput(10,50)(0.12,0.12){333}{\line(1,0){0.12}}
% \linethickness{0.3mm}
% \multiput(50,90)(0.12,-0.12){333}{\line(1,0){0.12}}
% \linethickness{0.3mm}
% \multiput(50,10)(0.12,0.12){333}{\line(1,0){0.12}}
% \linethickness{0.3mm}
% \multiput(10,50)(0.12,-0.12){333}{\line(1,0){0.12}}
% \put(5,55){\makebox(0,0)[cc]{}}

% \linethickness{0.3mm}
% \put(50,10){\line(0,1){80}}
% \linethickness{0.3mm}
% \qbezier(10,50)(11.72,50)(25.62,50)
% \qbezier(25.62,50)(39.53,50)(45,50)
% \linethickness{0.3mm}
% \put(55,50){\line(1,0){35}}
% \linethickness{0.15mm}
% \put(9.9,49.8){\circle*{2.89}}
% \linethickness{0.15mm}
% \put(90.0,49.8){\circle*{2.89}}

% \linethickness{0.15mm}
% \put(50,90.5){\circle*{2.89}}
% \linethickness{0.15mm}
% \put(50,9.5){\circle*{2.89}}

% \put(130,120){\makebox(0,0)[cc]{}}

% \put(65.53,88.16){\makebox(0,0)[cc]{}}

% \put(97.11,55.53){\makebox(0,0)[cc]{}}

% \put(96.84,56.05){\makebox(0,0)[cc]{}}

% \put(56.84,92.11){\makebox(0,0)[cc]{B}}

% \put(91.58,55.79){\makebox(0,0)[cc]{C}}

% \put(55.53,7.63){\makebox(0,0)[cc]{D}}

% \put(6.05,55){\makebox(0,0)[cc]{A}}

% \end{picture}
\unitlength 0.6mm
  \begin{picture}(100,100)(-50,-50)
    \put (0,-50) {\circle*{4}}
    \put (-50,0) {\circle*{4}}
    \put (0,50) {\circle*{4}}
    \put (50,0) {\circle*{4}}
    \put (-50,0) {\line(1,1){50}}
    \put (-50,0) {\line(1,-1){50}}
    \put (-50,0) {\line(1,0){45}}
    \put (50,0) {\line(-1,0){45}}
    \put (0,-50) {\line(0,1){100}}
    \put (0,-50) {\line(1,1){50}}
    \put (0,50) {\line(1,-1){50}}
    \put (-55,5) {$A$}
    \put (3,52) {$B$}
    \put (51,5) {$C$}
    \put (3,-55) {$D$}
  \end{picture}
\end{center}

The lines in the diagram suggest the correct structure of the $\alpha$-simplices for $\alpha = 1$.  The 1-simplices are $\{\{A,B\}, \{B,C\}, \{C,D\}, \{A,D\}, \{A,C\}, \{B,D\}\}$.  The 2-simplices are the faces $\{\{A,B,C\}, \{A,B,D\}, \{A,C,D\}, \{B,C,D\}\}$.  There are no (non-degenerate) 3-simplices.  A 3-simplex would imply a point such that all of the points are within $\alpha=1$ --- no such point exists.  The chain $[ABC]-[ABD]+[ACD]-[BCD]$ is a cycle with no boundary.

Define $R$ as $R = \{r \in [0, 1, 1/2, 1/3, \dots]\}$. Note in this example $R$ acts as an index set and the dimension of the homology is shown to be at least that of $R$.

Let $X = \{A,B,C,D\} \times R$.  Define $A_r = (A,r)$, $B_r = (B,r)$, $C_r = (C,r)$, and $D_r = (D,r)$.

We can again enumerate the 1-simplices for $X$.  Let $r,s,t,u \in R$.  The 1-simplices are
\begin{gather*}
\{\{A_r,B_s\}, \{B_r,C_s\}, \{C_r,D_s\}, \{A_r,D_s\},\\
 \{A_r,A_s\}, \{B_r,B_s\}, \{C_r,C_s\}, \{D_r,D_s\},\\
\mathbf{\{B_r,D_r\},\{A_r,C_r\}}\}.
\end{gather*}
The last two 1-simplices (highlighted) must have the same index in $R$ due to the distance constraint.

\noindent The 2-simplices are
\begin{align*}
\{
\{A_r,B_s,C_r\},&  \{A_s,B_r,D_r\},  \{A_r,C_r,D_s\}, \{B_r,C_s,D_r\},\\
\{A_r,B_s,B_r\},&  \{B_r,C_s,C_r\},  \{C_r,D_s,D_r\}, \{A_r,D_s,D_r\},\\
\{A_s,A_r,B_s\},&  \{B_s,B_r,C_s\},  \{C_s,C_r,D_s\}, \{A_r,A_r,D_s\},\\
\{A_r,A_s,A_t\},&  \{B_r,B_s,B_t\},  \{C_r,C_s,C_t\}, \{D_r,D_s,D_t\}\}.
\end{align*}

\noindent The 3-simplices are
\begin{align*}
\{
\{A_r,B_s,B_t,C_r\},&  \{A_s,A_t,B_r,D_r\},  \{A_r,C_r,D_s,D_t\}, \{B_r,C_s,C_t,D_r\},\\
\{A_r,B_t,B_s,B_r\},&  \{B_r,C_t,C_s,C_r\},  \{C_r,D_t,D_s,D_r\}, \{A_r,D_t,D_s,D_r\},\\
\{A_t,A_s,A_r,B_s\},&  \{B_t,B_s,B_r,C_s\},  \{C_t,C_s,C_r,D_s\}, \{A_t,A_r,A_r,D_s\},\\
\{A_r,A_s,A_t,A_u\},&  \{B_r,B_s,B_t,B_u\},  \{C_r,C_s,C_t,C_u\}, \{D_r,D_s,D_t,D_u\}\}.
\end{align*}

Define $\sigma_r := [A_r B_r C_r]-[A_r B_r D_r]+[A_r C_r D_r]-[B_r C_r D_r]$.  By calculation, $\sigma_r$ is shown to be a cycle.  Suppose that there existed a chain of 3-simplices such that the $\sigma_r$ is the boundary then $\gamma = [A_r A_s B_r D_r]$ must be included in the chain to eliminate $[A_r B_r D_r]$.  Since the boundary of $\gamma$ contains $[A_s B_r D_r]$ there must be a term to eliminate this term.  The only term with such a boundary is of the form $[A_s A_t B_r D_r]$.  Again, a new simplex to eliminate the extra boundary term is in the same form.  Either this goes on \emph{ad infinitum}, impossible since the chain is finite, or it returns to $A_r$ in which case the boundary of the original chain is 0 (contradicting that the $[A_r B_r D_r]$ term is eliminated).  For all $r\in R$, $\sigma_r$ is a generator for homology.

If $s \neq t$ then $\sigma_s$ and $\sigma_t$ are not in the same equivalence class.  Suppose they are.  The same argument above shows that any term with the face $[A_t B_t D_t]$ will necessarily have a face $[A_u B_t D_t]$ for some $u\in R$.  Such a term needs to be eliminated since it cannot be in the final image but such an elimination would cause another such term or cancel out the $[A_t B_t D_t]$.  In either case the chain would not satisfy the boundary condition necessary to equivalence  $\sigma_s$ and $\sigma_t$ together.

Each  $\sigma_s$ is a generator of homology and, therefore, the dimension of the homology is at least the cardinality of $R$ which in this case is infinite.
\begin{theorem}
  For $\alpha = 1$, the second $\alpha$-scale homology group for $$X =
  \{A,B,C,D\} \times R$$ is infinite dimensional.
\end{theorem}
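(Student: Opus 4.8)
The plan is to exhibit an explicit infinite family of $2$-cycles $\{\sigma_r\}_{r\in R}$ in the $\alpha$-scale chain complex of $X=\{A,B,C,D\}\times R$ at scale $\alpha=1$ and to show that their homology classes are linearly independent in $H_2$. First I would set up the combinatorics: using the distance relations $d(A,B)=d(B,C)=d(C,D)=d(A,D)=1$ and $d(A,C)=d(B,D)=\sqrt2$, together with the product metric $d((p,r),(q,s))=\max\{d_X(p,q),|r-s|\}$ on $X^{\ell+1}$ (actually on $X\subset\{A,B,C,D\}\times R$ one observes $R\subset[0,1]$ so the second coordinate contributes at most $1$), I would enumerate exactly which $(\ell+1)$-tuples lie in $U^{\ell+1}_\alpha$. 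The crucial observation, already highlighted in the excerpt, is that the $1$-simplices $\{A_r,C_r\}$ and $\{B_r,D_r\}$ force \emph{equal} $R$-index, since $d(A,C)=\sqrt2>1$ means a witness tuple can only be at scale $\alpha=1$ if the $R$-coordinates coincide (the diagonal-distance is realized only when the interval-coordinate spread is zero, as $\sqrt2 >1$ leaves no slack). This rigidity propagates to the $2$- and $3$-simplices listed in the excerpt.

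Next I would verify that $\sigma_r:=[A_rB_rC_r]-[A_rB_rD_r]+[A_rC_rD_r]-[B_rC_rD_r]$ is a cycle: this is the standard boundary-of-a-tetrahedron computation, $\partial\sigma_r=0$, which holds purely formally (each edge $[A_rB_r]$, $[A_rC_r]$, etc.\ appears twice with opposite signs), and all these simplices are genuinely in $U^2_\alpha$ resp.\ $U^3_\alpha$ by the enumeration. Then comes the heart of the argument: showing no nontrivial $\mathbb{R}$-linear combination $\sum_i c_i\sigma_{r_i}$ is a boundary. The excerpt's sketch does this by a ``propagation'' argument on the face $[A_rB_rD_r]$: any $3$-chain $\gamma$ with $\partial\gamma$ containing $[A_rB_rD_r]$ must use a simplex of the form $[A_rA_sB_rD_r]$; its boundary then introduces $[A_sB_rD_r]$, which can only be cancelled by another simplex $[A_sA_tB_rD_r]$, and so on; since a chain is a \emph{finite} sum this chain of forced cancellations must close up, and when it closes it forces $[A_rB_rD_r]$ itself to cancel out, contradiction. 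I would formalize this as follows: restrict attention to the subcomplex of simplices whose $\{A,B,C,D\}$-projection-pattern is $\{A,B,D\}$ with the $B,D$ coordinates fixed equal to $r$ — this is where the relevant boundary terms live — and show the relevant piece of $\partial_3$ cannot hit the class of $[A_rB_rD_r]$. To get linear independence over all of $R$ simultaneously I would note that the ``bad faces'' $[A_rB_rD_r]$ for distinct $r$ live in disjoint pieces of $C_2$ (they involve disjoint index data on the $B,D$ slots), so a boundary killing a combination $\sum c_i\sigma_{r_i}$ would have to kill each $[A_{r_i}B_{r_i}D_{r_i}]$ separately, reducing to the single-$r$ case; hence all $c_i=0$.

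The main obstacle is making the ``propagation/finiteness'' argument fully rigorous rather than heuristic: one must pin down precisely which $3$-simplices of $X$ can have a given $2$-face $[A_rB_rD_r]$ in their boundary, check (via the distance constraints, especially the $\sqrt2$-edges $A_rC_r$, $B_rD_r$ forcing coincident $R$-indices) that the only options are of the claimed form $[A_r A_s B_r D_r]$ (possibly also $[A_r B_r B_s D_r]$ etc.\ — these must all be enumerated and shown to lead to the same obstruction or to be degenerate), and then organize the forced-cancellation chain as, say, a walk in the index set $R$ whose edges cannot be traversed in a finite closed loop without returning the coefficient to zero. A clean way to package this is to define an auxiliary linear functional on $C_2$ — roughly, ``coefficient of $[A_\bullet B_r D_r]$ summed over the first slot'' — and show it is preserved (or changes in a controlled, one-directional way) under $\partial_3$, so that $\sigma_r$ has nonzero value but every boundary has value zero; I would develop this functional carefully, as it converts the informal infinite-descent into a one-line invariance check.
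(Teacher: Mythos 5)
Your proposal follows essentially the same route as the paper's proof: the same cycles $\sigma_r=[A_rB_rC_r]-[A_rB_rD_r]+[A_rC_rD_r]-[B_rC_rD_r]$, and the same propagation/finiteness argument showing that any $3$-chain attempting to bound $\sigma_r$ must contain an unending (or self-cancelling) sequence of simplices $[A_sA_tB_rD_r]$ to absorb the face $[A_\bullet B_rD_r]$, which is impossible for a finite chain. Your suggested auxiliary linear functional and the explicit reduction of a general combination $\sum c_i\sigma_{r_i}$ to the single-$r$ case are reasonable ways to tighten the paper's somewhat informal presentation, but they do not constitute a different method.
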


\subsection[Consideration for alpha<1]{\boldmath Consideration for $\alpha < 1$}
The example above is tailored for scale $\alpha = 1$.  In this metric space the nature of the second $\alpha$-scale homology group changes significantly as $\alpha$ changes.

Consider when $\alpha$ falls below one.  In this case the structure of the simplices collapses to simplices restricted to a line (with simplices of the form $\{\{A_r,A_s,A_t\},  \{B_r,B_s,B_t\},  \{C_r,C_s,C_t\}, \{D_r,D_s,D_t\}\}$).  These are clearly degenerate simplices resulting in a trivial second homology group.

In this example the homology was significantly reduced as $\alpha$ decreased.  This is not necessarily always the case.  The above example could be further enhanced by replicating smaller versions of itself in a fractal-like manor so that as $\alpha$ decreases the $\alpha$-scale homology encounters many values with infinite dimensional homology.

\subsection[Consideration for alpha>1]{\boldmath Consideration for $\alpha > 1$}
There are two cases to consider when $\alpha >1$.  The first is the behavior for very large $\alpha$ values.  In this case the problem becomes simple as illustrated by the lemma below.

Define $\mathbf{\alpha}$ \textbf{large} with respect to $d$ if there exists an $\rho \in X$ such that $d(\rho,y) \le \alpha$ for all $y \in X$.

\begin{lemma}
Let $X$ be a separable, compact metric space with metric $d$.  If $\alpha$ is large with respect to $d$ then the $\alpha$-scale homology of $X$ is trivial.
\end{lemma}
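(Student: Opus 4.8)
The plan is to show that, once $\alpha$ is large in the stated sense, the simplicial complex $C_{X,\alpha}$ is simply the full simplex on the vertex set $X$, and then to conclude triviality of its homology either by citing that a simplex (equivalently, a cone) is contractible, or by writing down an explicit chain contraction based at the distinguished point $\rho$.

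First I would unwind the definition of ``large''. Suppose $\rho\in X$ satisfies $d(\rho,y)\le\alpha$ for every $y\in X$. Then for any $\ell\ge 0$ and any tuple $x=(x_0,\dots,x_\ell)\in X^{\ell+1}$ we have
$d(x,D^{\ell+1})\le d\bigl(x,(\rho,\dots,\rho)\bigr)=\max_{i} d(x_i,\rho)\le\alpha$,
so $x\in U^{\ell+1}_{\alpha}$. Hence $U^{\ell+1}_{\alpha}=X^{\ell+1}$ for all $\ell$, and every finite tuple of points of $X$ is a simplex of $C_{X,\alpha}$; that is, $C_{X,\alpha}$ is the full (generally infinite-dimensional) simplex on vertex set $X$. (Note this hypothesis is weaker than $\alpha\ge\diam(X)$, so this extends the $\alpha=\infty$ observation recorded earlier.)

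Next I would exhibit acyclicity via a chain contraction using $\rho$ as apex. On the simplicial chain groups $C_\ell$ of $C_{X,\alpha}$ define $h_\ell\colon C_\ell\to C_{\ell+1}$ on generators by $h_\ell(x_0,\dots,x_\ell)=(\rho,x_0,\dots,x_\ell)$ — legitimate precisely because, by the previous paragraph, every such tuple is again a simplex — and extend linearly. The usual cone identity for the simplicial boundary gives $\partial_{\ell+1}h_\ell+h_{\ell-1}\partial_\ell=\mathrm{id}$ on $C_\ell$ for $\ell\ge 1$, while in degree $0$ it degenerates to $\partial_1 h_0=\mathrm{id}-\varepsilon$, with $\varepsilon$ the augmentation/constant map sending each vertex to $\rho$. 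Therefore the reduced $\alpha$-scale homology of $X$ vanishes in every degree; equivalently $C_{X,\alpha}$ is contractible and its $\alpha$-scale homology is trivial ($H_0\cong\mathbb{R}$, $H_\ell=0$ for $\ell>0$).

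There is no serious obstacle here; the only points needing care are bookkeeping ones: that chains are finite linear combinations, so the argument is unaffected by $X$ (hence the simplex) being infinite; that degenerate tuples are treated consistently with whatever convention the complex uses (in the alternating/Alexander--Spanier picture they contribute zero, which does not disturb the cone identity); and the sign cancellation in $\partial h+h\partial=\mathrm{id}$. The conceptual one-line version, which I would also state, is simply that $C_{X,\alpha}$ is a cone with apex $\rho$ and is thus contractible.
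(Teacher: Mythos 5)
Your proof is correct and follows essentially the same route as the paper: both first observe that the hypothesis forces $U^{\ell+1}_{\alpha}=X^{\ell+1}$, and both then kill every cycle by coning it off at the apex $\rho$. The only difference is presentational — you package the cone as a full chain contraction $\partial h+h\partial=\mathrm{id}$, while the paper applies the cone operator directly to a given cycle $\sigma$ and checks that $\partial(\sigma_\rho)=\sigma$ because the $\rho$-containing boundary terms cancel.
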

\begin{proof}
Let $\rho \in X$ satisfy the attribute above.  Then $U^{\ell+1}_{\alpha} = X^{\ell+1}$ since $$d((\rho,\dots,\rho), (x_0, x_1,\dots,x_\ell)) \le \alpha$$ for all values of $x_i$.

Let $\sigma = \sum_{j=1}^{k} c_j (a_0^j, a_1^j,\dots,a_n^j)$ be an $n$-cycle.    Define $$\sigma_\rho = \sum_{j=1,k} c_j (a_0^j, a_1^j,\dots,a_n^j, \rho).$$ The $n+1$-cycle, $\sigma_\rho$, acts as a cone with base $\sigma$.  Since $\sigma$ is a cycle the terms in the boundary of $\sigma_\rho$ containing $\rho$ cancel each other out to produce zero.  The terms without $\rho$ are exactly the original $\sigma$.  Therefore there exists no cycles without boundaries.  This proves that for $\alpha$ large and $X$ infinite the homology of $X$ is trivial.
\end{proof}

In the case that $\alpha > 1$ but is still close to $1$, the second homology
group changes significantly but does not completely disappear.  In the
example, simplices that existed only by the equality in the definition of
$\alpha$-scale homology when $\alpha =1$ now find neighboring $2$-simplices
joined by higher dimensional $3$-simplices.  The result is larger equivalence
classes of $2$-cycles.  This reduces the infinite dimensional homology for
$\alpha = 1$ to a finite dimension for $\alpha$ slightly larger than $1$. 
As $\alpha$ gets closer to $1$ from above the dimension of the homology increases without bound.

It is interesting to note that the infinite characteristics for $\alpha = 1$ are tied heavily to the fact that the simplices that determined the structure lived on the bounds of being simplices.  As $\alpha$ changes from $1$, the rigid restrictions on the simplices is no longer present in this example.  The result is a significant reduction in the dimension of the homology.

\section{Open problems and remarks}

Throughout the text, we have attempted to give indications to promising areas of
future research. Here we summarize some of the main points.

\begin{itemize}
\item  How do the methods of this paper apply to concrete examples, in
  particular, to 
the data in Carlsson et al. \cite{3}? Specifically, can we recognize surfaces?
Which 
substitutes for torsion do we have at hand?
\item  For non-oriented manifolds, can one introduce a twisted version of the
  coefficients 
that would make the top-dimensional Hodge cohomology visible?
\item  Is the Hodge cohomology independent of the choice of neighborhoods
  (Vietoris-Rips or ours)? Under which properties of metric spaces are the
  images of the corestriction maps (mentioned in Remark \ref{remFunctorial}
  independent of these choices?
\item The Cohomology Identification Problem (Question \ref{qCIP}: To what
  extent are $H^{\ell}_{L^2 ,\alpha} (X) $ 
and $H^{\ell}_\alpha (X)$ isomorphic?
\item The Continuous Hodge Decomposition (Question \ref{qCHD}: Under what
  conditions on $X$ 
and $\alpha > 0$ is it true that there is an orthogonal direct sum
decomposition of $C^{\ell+1}_\alpha$ 
in boundaries, coboundaries, and harmonic functions?
\item The Poisson Regularity Problem (Question \ref{qPRP}: For $\alpha > 0$ and $\ell> 0$, suppose that
$\Delta f = g$ where $g \in C^{\ell+1}_\alpha$ and $f \in L^2_\alpha (U_α^{\ell +1} )$. Under what conditions on $(X, d, \mu)$
is $f$ continuous?
\item 
The Harmonic Regularity Problem (Question \ref{qHRP}: {qHRP} For $\alpha>0$, and $\ell>0$,
  suppose that $\Delta f=0$ where $f\in L^2_a(U^{\ell+1}_{\alpha})$. What
  conditions on $(X,d,\mu)$ would imply $f$ is continuous?

\end{itemize}

\bibliographystyle{plain}
\bibliography{smale}

\end{document}